\numberwithin{equation}{section}
\newtheorem{Thm}{Theorem}[section]
\newtheorem{Rmk}[Thm]{Remark}
\newtheorem{Prop}[Thm]{Proposition}
\newtheorem{Cor}[Thm]{Corollary}
\newtheorem{Lem}[Thm]{Lemma}
\newtheorem{Lemma}{Lemma}
\newtheorem{Def}[Thm]{Definition}
\def\bR {\mathbf{R}}
\def\bT {\mathbf{T}}
\def\bZ {\mathbf{Z}}
\def\Z {\mathbf{Z}}
\def\cF {\mathcal{F}}
\def\eps {{\epsilon}}
\def\om {{\omega}}
\def\d {{\partial}}
\newcommand{\Div}{\operatorname{div}}
\newcommand{\rot}{\operatorname{rot}}
\newcommand{\sgn}{\operatorname{sign}}
\newcommand{\Supp}{\operatorname{supp}}
\newcommand{\ba}{\begin{aligned}}
\newcommand{\ea}{\end{aligned}}
\newcommand{\be}{\begin{equation}}
\newcommand{\ee}{\end{equation}}
\newcommand{\ubl}{u^{BL}}
\newcommand{\pbl}{p^{BL}}
\newcommand{\Pbl}{P^{BL}}
\newcommand{\Ubl}{U^{BL}}
\newcommand{\uint}{u^{int}}
\newcommand{\vint}{v^{int}}
\newcommand{\ustat}{u^{stat}}
\newcommand{\pstat}{p^{stat}}
\newcommand{\Ph}{\mathbb P_{2D}}
\newcommand{\bt}{\bar \theta}
\newcommand{\tbl}{\theta^{BL}}
\newcommand{\tapp}{\theta^{app}}
\begin{document}
\title[$\beta$-plane model for rotating fluids in a thin layer]{Mathematical study of the $\beta$-plane model for rotating fluids in a thin layer}

\author{Anne-Laure Dalibard  \and Laure Saint-Raymond
}
\thanks{DMA -  UMR CNRS 8553, Ecole Normale Sup\'erieure, 45 rue d'Ulm, 75005 Paris, FRANCE. E-mail: {\tt Anne-Laure.Dalibard@ens.fr, Laure.Saint-Raymond@ens.fr}}

%\date{\today}

\maketitle

\begin{abstract}
This article is concerned with an oceanographic model describing the asymptotic behaviour of a rapidly rotating and incompressible fluid with an inhomogeneous rotation vector; the motion takes place in a thin layer. We first exhibit a stationary solution of the system which consists of an interior part and a boundary layer part. The spatial variations of the rotation vector generate strong singularities within the boundary layer, which have repercussions on the interior part of the solution.
The second part of the article is devoted to the analysis of two-dimensional and three-dimensional waves. It is shown that the thin layer effect modifies the propagation of three-dimensional Poincar\'e waves by creating small scales. Using tools of semi-classical analysis, we prove that the energy propagates at speeds of order one, i.e. much slower than in traditional rotating fluid models.\\

\selectlanguage{french}

\noindent{\sc Résumé.} On étudie ici le comportement asymptotique d'un fluide incompressible tournant à grande vitesse dans une couche mince, avec un vecteur rotation inhomogène; ce type de modèle apparaît en océanographie. On commence par exhiber une solution stationnaire du système, obtenue comme la somme d'un terme intérieur et d'un terme de couche limite. Les variations spatiales du vecteur rotation génèrent de fortes singularités dans la couche limite, qui se répercutent dans la partie intérieure de la solution.
Dans un second temps, on caractérise le comportement des ondes bi- et tri-dimensionnelles. L'effet de couche mince modifie la propagation des ondes de Poincaré (3D) en favorisant l'apparition de petites échelles. Grâce à une analyse de type semi-classique, on montre que la vitesse de propagation de l'énergie est d'ordre un, soit beaucoup plus faible que dans les modèles classiques de fluides tournants.

\selectlanguage{english}
\end{abstract}

\section{Introduction}

The goal of this article is to study the behaviour of a rotating, incompressible and homogeneous fluid, whose \textbf{rotation vector depends on the (horizontal) space variable}. We also assume that the motion of the fluid takes place in a \textbf{thin layer}. These two features are inspired from models of oceanic circulation, which are the main physical motivation for our study. We will explain more thoroughly the physical assumptions and scalings leading to our model in paragraph \ref{ssec:motivation}.

The mathematical framework of our analysis  is the following: consider the equation
\be\label{rescaled}\ba
\d_t u + \frac{1}{\eps } b(x_h) \wedge u + \begin{pmatrix}
                                            \nabla_h p \\ \frac{1}{\eta^2 } \d_z p
                                           \end{pmatrix} - \nu_h \Delta_h u -\nu_z \d_{zz} u=0,\\(x_h,z)\in\omega_h\times (0,1),
\ea
\ee
where the horizontal domain $\omega_h$ is either $\bT^2$ or $\bT\times \bR$. Equation \eqref{rescaled} is endowed with 
Navier conditions at the bottom of the domain
\be\label{bottom}
\d_z u_{h|z=0}=0,\quad u_{3|z=0}=0,
\ee
and we assume that there is a shear stress at the surface of the fluid, described by the boundary condition

\be
\ba\label{top}
\d_z  u_{h|z=1}(t,x_h)=\gamma \sigma(x_h),\\
u_{3|z=1}=0.
\ea
\ee
Above, $\eps,\eta,\nu_h,\nu_z, \gamma$ are positive parameters, whose relative size will be precised later on. Let us merely announce that $\eps,\eta,\nu_h,\nu_z$ are meant to be small, whereas $\gamma$ will be taken  large. We emphasize that equation \eqref{rescaled}, supplemented with \eqref{top}-\eqref{bottom}, is already in rescaled form. Hence all quantities are dimensionless. We refer to the next subsection for a derivation of this equation, and for a definition of the various parameters in terms of the physical quantities involved in the model.

Notice that the rotation is of order $\eps^{-1}$, with $\eps\ll 1$; hence we focus on the limit of \textbf{high rotation}. As we will see in paragraph \ref{ssec:motivation}, the parameter $\eta$ is the aspect ratio of the domain: assuming that $\eta\ll 1$ means that the characteristic horizontal length scale is much larger than the vertical one. In other words, the motion is set in a \textbf{thin layer}.

\vskip1mm

In this article, we are primarily interested in two topics: the computation of stationary solutions of our model, and the analysis of the local stability of these stationary solutions in the case $\omega_h=\bT\times \bR$. In particular, we will not address the full Cauchy problem here. Indeed, it can be proved that in the scaling which is the most relevant for our study, the energy estimates for the system \eqref{rescaled}-\eqref{bottom}-\eqref{top} explode in finite time. In a similar way, the stationary solution that we build has a size which becomes arbitrarily large as $\eps,\eta$ vanish. Hence the problem \eqref{rescaled}-\eqref{bottom}-\eqref{top} is highly singular.

To our knowledge, the asymptotic analysis of the system \eqref{rescaled} has not been addressed before: in the papers \cite{GSR} by I. Gallagher and the second author, and then \cite{DMS} by A. Dutrifoy, A. Majda and S. Schochet, the authors study the asymptotic behaviour of a shallow water system within a $\beta$-plane model (i.e. in the case $b(x_h)=\beta x_2$). This shallow water system can be obtained by considering the limit $\eta\to 0$ in \eqref{rescaled} (see \cite{GP}). Thus the studies of \cite{GSR,DMS} are concerned with the successive limits $\eta\to0, \eps\to 0$. In \cite{DG}, B. Desjardins and E. Grenier take into account the thin layer effect within the original Navier-Stokes system, but they assume that $b(x_h)=1+\eps x_2$; hence the penalization is constant at first order. Our goal is to study a crossed limit $(\eps,\eta)\to (0,0)$, with a rotation vector which has variations at the main order.

Let us now make precise the main novelties of our work: first, the construction of stationary solutions involves the definition of \textbf{boundary layer terms with a varying Coriolis factor $b$}. Since the size of the boundary layer is directly related to the amplitude of $b$, singularities appear at the vanishing points of $b$. These singularities in the boundary layer have repercussions on the interior part of the stationary solution, and make the construction much more involved than in the constant case. On the other hand, studying the stability of stationary solution when $\omega_h=\bT\times \bR$ amounts to describing the \textbf{waves in the $\beta$-plane model  with a thin layer effect}. We exhibit new types of behaviour for the Poincar\'e waves, for which we prove that dispersion takes place on a time scale much larger than usual: for instance, in Chapter 2 of \cite{GSR-handbook}, the group velocity associated with Poincar\'e waves (i.e. the speed at which energy propagates) is of order $\eps^{-1}$, while the group velocity in the present setting is of order one. The proof of this fact uses tools of semi-classical analysis, in the spirit of the recent papers by C. Cheverry, I. Gallagher, T. Paul and the second author (see \cite{CGPSR,CGPSR-papier}). Notice also that the presence of dispersion in an oceanographic model is itself unusual: indeed, most models are set in a compact domain (see \cite{CDGG}), where no dispersion can occur. Moreover, the most commonly used whole-space model is the shallow water system within the $\beta$-plane model (see \cite{GSR,DMS}), for which waves are trapped into a waveguide, and thus no dispersion occurs either.

\vskip1mm

In the next paragraphs, we explain which physical assumptions led to the system \eqref{rescaled}. We then present our main results. Eventually, let us point out that the structure of the stationary solution which will be built in this article enforces particular shapes for the isothermal surfaces inside the fluid (the so-called ``thermocline''). We present a few results in this regard in paragraph \ref{ssec:thermocline}.

\subsection{Physical derivation of equation \eqref{rescaled}}\label{ssec:motivation}$ $

Let us now explain in which regime oceanic currents can be modeled by equation \eqref{rescaled}. In this subsection, we denote by $u$ the velocity of oceanic currents in \textit{dimensional variables.} The dimensionless variables, i.e. the ones in which equation \eqref{rescaled} is written, will be denoted with a prime.

$\bullet$ As a starting point, we recall that the ocean can be considered as an incompressible fluid with variable density $ \rho$. In order to simplify the analysis, we neglect the variations of density, which are of order $10^{-3}$ in the ocean. Consequently, the velocity $u$ satisfies the Navier-Stokes equations, with a Coriolis term accounting for the rotation of the Earth

\begin{equation}
\label{NS-ocean}
\begin{aligned}
\rho_0 \left[\d_t u+(u\cdot \nabla )u \right]+\nabla p =\cF +  \rho_0 u\wedge\Omega \,,\\
\nabla \cdot u =0\,,
\end{aligned}
\end{equation}
where $\cF$  denotes the  frictional
force acting on the fluid, $\Omega$ is the (vertical component of the) Earth rotation vector,  $p$ is the pressure defined as the Lagrange multiplier associated with the incompressibility constraint, and $\rho_0$ is the (constant) value of the density.

Since we have chosen to work on large horizontal scales (see below), equation \eqref{NS-ocean} should be written in spherical coordinates. However, computations involving spherical coordinates are much lengthier, and do not change substantially the physical phenomena we wish to highlight, at least at a formal level (see \cite{Pedlosky1}). Thus in the rest of the article, we neglect the \textbf{curvature of the Earth} (but we keep a varying Coriolis factor nonetheless). Note also that we neglect the influence of the horizontal component of the Earth rotation vector, which is classical in an oceanographic framework (see \cite{GSR-handbook}).

The observed persistence over several days of large-scale waves  in the oceans shows that {\bf frictional forces} $\cF$ are weak, almost  everywhere, when
compared with the Coriolis acceleration and the pressure gradient, but large when compared with the kinematic viscous dissipation of water. 
One common but not very precise notion is that  small-scale
motions, which appear sporadic or on longer time scales, act to  smooth and mix
properties on the larger scales by processes analogous to molecular,  diffusive
transports.
For the present purposes it is only necessary to note that one way to  estimate
the dissipative influence of smaller-scale motions is to retain the same
representation of the frictional force 
$$\cF= A_h \Delta_h u+A_z \d_{zz} u$$
where $A_z$ and $A_h$ are respectively the vertical and horizontal turbulent viscosities, of much larger magnitude than the
molecular value, supposedly because of the greater efficiency of  momentum
transport by macroscopic chunks of fluid. 
Notice that $A_z\neq A_h$  is therefore natural in a geophysical framework (see \cite{Pedlosky1}). Moreover, models of oceanic circulation usually assume that the vertical viscosity $A_z$ is not constant (see \cite{BD,PP}); we choose to retain only the mean boundary value of the vertical viscosity $A_z$, since one of the motivations for our work was to compute the boundary layer terms in a context where $\Omega$ is not constant.

$\bullet$
Let us now describe the boundary conditions associated with \eqref{NS-ocean}: typically, Dirichlet boundary conditions are enforced at the bottom of the ocean and on the lateral boundaries of the horizontal domain $\omega_h$ (the coasts), i.e.
\be
\begin{aligned}
u_{|z=h_B(x_h)}=0 \quad\text{(bottom)},\\
u_{|x\in \d \omega_h}=0\quad\text{(coasts).}
\end{aligned}
\ee
In equation \eqref{rescaled}, we have  {\bf neglected the  effects of the lateral boundary conditions} by considering the case when $\omega_h$ is either $\bT\times\bR$ or $\bT^2$.  By doing so, we have deliberately prohibited the apparition of strong western boundary currents, which play a crucial role in the oceanic circulation (e.g. the Gulf Stream, the Kuroshio current). These horizontal boundary layers are believed to be responsible for the vertical structure of the ocean, and for the creation of large eddies. In the linear case, the mathematical treatment of these layers, called \textit{Munk layers}, is performed by B. Desjardins and E. Grenier in \cite{DG}. Their study could probably be mimicked in the present paper without strong modifications; however, we have chosen to leave this issue aside in order to focus on the other features of the model. Note that in the nonlinear case, the analysis of lateral boundary layers is completely open from a mathematical point of view.

In a similar fashion, for the sake of simplicity, we did not take into account  the {\bf topography of the bottom} in \eqref{bottom} (i.e. we have taken $h_B\equiv 0$), and we took Navier instead of Dirichlet boundary conditions, meaning that oceanic currents achieve perfect slip on the bottom. This choice simplifies the mathematical analysis, since it avoids the apparition of Ekman boundary layers on the lower boundary. The treatment of Ekman boundary layers in the case of a Dirichlet boundary condition with $h_B\equiv 0$ is in fact completely similar to the one of Ekman boundary layers due to the wind at the surface of the fluid, which is performed in section \ref{sec:BL}. Hence changing Dirichlet into Navier boundary conditions is not a strong mathematical restriction. The case of Ekman boundary layers with a non-zero $h_B$ has been addressed by B. Desjardins and E. Grenier  \cite{DG}, N. Masmoudi \cite{M}, and D. G\'erard-Varet \cite{dgv} in the case of a constant $b$, when $h_B$ is of the order of the Ekman boundary layer (see below). In the present case, if the same assumption is satisfied, it can be checked that the case of a non-constant $h_B$ can be treated with the same arguments as the ones in section \ref{sec:BL}.

We assume that the upper surface, which we denote by $\Gamma_s$, has an equation of the type $z=h_S(t,x_h)$.
As boundary conditions on $\Gamma_s$, we enforce (see \cite{GP})
\begin{equation}
\label{top-bis}\begin{aligned}
\Sigma\cdot n_{\Gamma_s}=\sigma_w,\\
\frac{\d}{\d t}\mathbf 1_{0\leq z \leq  h_S(t,x)} + \mathrm{div}_x(\mathbf 1_{0\leq z \leq  h_S(t,x_h)} u)=0\end{aligned}
\end{equation}
where $\Sigma$ is the total stress tensor of the fluid, and $\sigma_w$ is a given stress tensor describing the {\bf wind on the surface of the ocean}. In general, $\Gamma_s$ is a free surface, and a moving interface between air and water, which has its own self consistent motion. In \eqref{top}, we have assumed that
$$
h_S(t,x_h)\equiv D,
$$
where $D$ is the typical depth of the ocean. Hence  (\ref{top}) is a rigid lid approximation, which is a drastic, but standard simplification.
The justification of (\ref{top}) starting from a free surface is mainly open from a mathematical point of view;  we refer to \cite{AP} for the derivation of Navier-type wall laws for the Laplace equation, under general assumptions on the interface, and to \cite{LT} for some elements of justification in the case of the great lake equations.  Nevertheless, from a physical point of view, the simplification does not appear so dramatic, since in any case the free surface is so turbulent with waves and foam, that only modelization is tractable and meaningful. Condition (\ref{top}) is a simple modelization which already catches most of the physical phenomena (see \cite{Pedlosky1}).

\noindent $\bullet$ Let us now evaluate the order of magnitude of the different parameters occurring in \eqref{NS-ocean}, and write the equations in a dimensionless form. 
We set
$$\begin{aligned}
   u_h= U u_h',\quad
u_3=W u_3',\\
x_h=H x_h',\quad
z=Dz',\\
  \end{aligned}
$$
where $U$ (resp. $W$) is the typical value of the horizontal (resp. vertical) velocity, $H$ is the horizontal length scale, and $D$ the depth of the ocean. In order that $u'(x')$ remains  divergence-free, we choose
$$
W= \frac{U D}{H}.$$
A typical value of the horizontal velocity  for  the mesoscale eddies that have been observed in western Atlantic (see for instance \cite{Pedlosky1}) is $U\sim  1 \ \mathrm{cm\cdot s^{-1}}$. Moreover, the typical horizontal and vertical scales which we are interested in are 
$$
H\sim 10^4\ \mathrm{km}, \quad\text{and } D\sim 4\  \mathrm{km}.
$$
Notice that we work on an almost planetary scale, which justifies the use of a varying rotation vector. Concerning the rotation, we write $\Omega=\Omega_0\sin(\theta)$, where $\theta$ is the latitude, and $\Omega_0=2\pi /\mathrm{day}\sim 7 \cdot 10^{-5} s^{-1}.$ Eventually, we consider the motion on a typical time scale $T$, with $T$ of the order of a few months ($T\sim 10^7 s$).
With these values, we get
$$
\eps:= \frac{1}{ T \Omega_0 }\sim  10^{-3} ,
$$
and hence $\eps\ll 1$ (notice that the parameter $\eps$ is dimensionless). Thus the asymptotic of fast rotation (small Rossby number) is valid.

\vskip1mm
Thus the dimensionless system (see for instance \cite{Pedlosky1,gill}) becomes
\be\label{NS}
\begin{aligned}
\d_t u' + \frac{TU}{H} u'\cdot \nabla u' + \frac{1}{\eps}b(x_h) e_3\wedge u' + \begin{pmatrix} \nabla_h p' \\ \frac{1}{\eta^2}\d_z p'
                                                        \end{pmatrix}
-\nu_h\Delta_h u' -\nu_z \d_{zz} u'=0,\\
\nabla\cdot u'=0,
\end{aligned}
\ee
where $\eta:=D/H\sim  4 \cdot 10^{-4}$ is the aspect ratio, and the vertical and horizontal viscosities are defined by 
$$
\nu_z:=\frac{T A_z}{\rho_0  D^2}, \quad\nu_h=\frac{A_h T}{\rho_0  H^2}.
$$
Typical values for the  turbulent viscosities are (see \cite{gill}) $A_z/\rho_0\sim 10^{-4}-10^{-3}\;\mathrm{m^2\cdot s^{-1}}$, and $A_h/\rho_0\sim 10^{4}-10^{5}\;\mathrm{m^2\cdot s^{-1}}$, which yields in the present case $\nu_z\sim 10^{-3}$ and $\nu_h\sim 10^{-10}-10^{-9}$.

The boundary conditions are \eqref{top}, \eqref{bottom}, with
$$
\gamma:= \frac{|\sigma_w| D }{A_z U}.
$$
Notice that with the time scale chosen above, the convective term is of order $10^{-2}\ll 1$; hence we neglect it in the rest of the study. Note however that the effect of this term is expected to be large if the waves associated with \eqref{NS} are resonant, and small if they are dispersive. Thus the rigorous treatment of the convective term requires a mathematical analysis which goes beyond the scope of this article, and which we deliberately leave aside from now on.

\vskip2mm
In the rest of the article, the relative size of the parameters will be chosen as follows: the most important feature of our analysis is that $\eta$ and $\eps$ are chosen of the same order. In order to keep the number of different small parameters to a minimum, we also choose to take $\nu_z=\eps$, and $\gamma=\eps^{-2}$; with this last choice, the interior part of the stationary solution built in the next sections will be of order one. Concerning the size of $\nu_h$, our analysis  allows us to consider horizontal viscosities $\nu_h=o(\eps)$, which is compatible with the orders of magnitude given above.

\subsection{Main results}\label{ssec:results}~

We present here two types of results: first, we build an approximate ``stationary'' solution of the system \eqref{rescaled}, endowed with the boundary conditions \eqref{top}-\eqref{bottom}. The problem studied is rather different from the Cauchy problem, since no initial data is prescribed. The goal is merely to compute a solution of \eqref{rescaled}, and to investigate its asymptotic behaviour as $\eps$ vanishes.

Once the behaviour of the stationary solution is understood, we study its local stability; since equation \eqref{rescaled} is linear, this is equivalent to studying the Cauchy problem for equation \eqref{rescaled}, with homogeneous Navier conditions at $z=0$ and $z=1$. We then exhibit Rossby waves, which are essentially two-dimensional, and Poincar\'e waves, which are fluctuations around the three dimensional part of the initial data, and which take place on a much larger time scale.

Let us now state our result about stationary solutions: since the vertical viscosity is small (we take $\nu_z=\eps\ll 1$), it disappears from the asymptotic system. As a consequence, solutions of the limit system cannot satisfy the boundary conditions. Thus boundary layer terms are introduced, which restore the correct boundary conditions. Hence the  stationary  solution built here is composed of an interior part and a boundary layer part.

We state our result in the case $\omega_h=\bT\times \bR$, and explain below the Theorem the main differences when $\omega_h=\bT^2$. Throughout the paper, we set$$\omega:=\omega_h\times (0,1).$$

\begin{Thm}[Stationary solutions of \eqref{NS}] Let $\omega_h=\bT\times \bR$. 

Assume that $\nu_h=o(\eps)$ and that $\eta=\nu_z=\eps,$ $\gamma=\eps^{-2}$.

Let $\sigma \in H^2(\omega_h)\cap W^{2,\infty}(\omega_h)$ such that
\be\label{hyp:sigma_quadra}
\begin{aligned}
   \left| \sigma(x,y) \right|,\;    \left|\d_x \sigma(x,y) \right|\leq C y^2\quad\forall (x,y)\in\omega_h,\\
\left| \d_y \sigma(x,y)  \right| \leq C |y|\quad\forall (x,y)\in\omega_h
  \end{aligned}
\ee
and such that the following compatibility condition is satisfied
\be
 \int_\bT \sigma_1(x, y)\:dx=0\quad\forall y.\label{compatibility}
\ee

Assume that the Coriolis factor $b$ satisfies the following assumptions:
\be\begin{aligned}
b(x,y)=b(y)\quad \forall (x,y)\in\omega_h,\quad \text{with }b\in W^{2,\infty}_{\text{loc}}(\bR),\\
b(y)\neq 0 \text{ for }y\neq  0,\text{ and }\exists C>0,\  |b(y)|\geq C \text{ for }|y|\geq 1,\\
\exists c>0,\ c^{-1}\leq b'(y)\leq c\ \forall y,\quad b(y)\sim \beta y \text{ for }y\to 0. 
  \end{aligned}
\label{hyp:Coriolis}\ee

Then there exists stationary functions $(\ustat, \pstat)\in L^2(\omega)\cap H^1(\omega)$, such that $\ustat$ satisfies  \eqref{top}, \eqref{bottom} and
$$\begin{aligned}
  \frac{1}{\eps} b(y)(\ustat_h)^\bot +  \nabla_h \pstat - \eps \d_{zz} \ustat_h -\nu_h \Delta_h\ustat_h =r_h^1 + r_h^2\\
\frac{1}{\eps^2} \d_z\pstat - \eps \d_{zz} \ustat_3 -\nu_h \Delta_h\ustat_3 =r_3^1 + r_3^2,
  \end{aligned}
$$with
$$\begin{aligned}
r_h^1=o(1)\text{ in }L^2(\omega),\quad r_h^2=o(\sqrt{\nu_h})\text{ in }L^2([0,1], H^{-1}(\omega_h)),\\
r_3^1=o(\eps^{-1})\text{ in }L^2(\omega),\quad r_3^2=o(\eps^{-1} \sqrt{\nu_h})\text{ in }L^2([0,1], H^{-1}(\omega_h)).
  \end{aligned}
$$

Moreover, $\ustat$ can be decomposed as 
$$
\ustat= \ubl + \uint,
$$
where $\ubl$ is a term located in a boundary layer of size $\eps$, in the vicinity of the surface, and $\uint$ is an interior term. The functions $\ubl $ and $\uint$ satisfy the following estimates
\be
\begin{aligned}
\| \uint\|_{L^2(\omega)}\leq C \| \sigma\|_{H^2(\omega_h)},\\ 
\|\ubl_h \|_{L^2(\omega)}\leq \frac{C}{\sqrt{\eps}} \| \sigma\|_{H^1(\omega_h)},\\
\| \ubl_3\|_{L^2(\omega)}\leq C \| \sigma\|_{H^1(\omega_h)}.
\end{aligned}
\ee
\label{thm:stat}
\end{Thm}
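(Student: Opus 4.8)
The plan is to construct the stationary solution by the classical ansatz of matched asymptotics: write $\ustat = \uint + \ubl$, where the interior part $\uint$ solves the limiting (inviscid, $\nu_z\to0$) system in $\omega$ and the boundary layer part $\ubl$ is a fast-decaying correction near $z=1$ restoring the Navier condition $\d_z u_{h|z=1}=\gamma\sigma = \eps^{-2}\sigma$. First I would analyze the limit system: setting $\nu_z=\eps$ formally to $0$ and keeping $\nu_h$ small, the leading balance in the horizontal equation is the geostrophic relation $\eps^{-1} b(y) (\uint_h)^\bot + \nabla_h \pstat \approx 0$ together with $\d_z \pstat \approx 0$ from the vertical equation (since $\gamma=\eps^{-2}$, the forcing enters at the right order). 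This forces $\pstat$ to be $z$-independent and determines $\uint_h$ in terms of $\nabla_h \pstat$ via division by $b(y)$; the vanishing of $b$ at $y=0$ is what makes this division singular and is the source of the growth assumptions \eqref{hyp:sigma_quadra} on $\sigma$ — they are exactly what is needed so that $\sigma/b$, $\sigma/b^2$, etc., stay in $L^2$. The incompressibility constraint $\nabla\cdot\uint=0$ then gives, after integrating in $z$, an elliptic (or transport-type) equation for $\pstat$ on $\omega_h$, which I would solve using the compatibility condition \eqref{compatibility} ($\int_\bT \sigma_1\,dx=0$) to guarantee solvability; $\uint_3$ is recovered by integrating the divergence-free condition in $z$ using $u_{3|z=0}=0$.

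Next I would construct the boundary layer. Near $z=1$ introduce the stretched variable $\zeta=(1-z)/\eps$ (size $\eps$ as announced, since $\nu_z=\eps$). The dominant balance for $\ubl_h$ is an Ekman-type ODE in $\zeta$: $\eps^{-1} b(y)(\ubl_h)^\bot - \eps\,\d_{zz}\ubl_h \approx 0$, i.e. $b(y)(\ubl_h)^\bot = \d_{\zeta\zeta}\ubl_h$, whose decaying solutions are explicit exponentials $e^{-\sqrt{|b(y)|/2}\,\zeta}$ times rotations, with amplitude fixed by the shear condition $\d_\zeta \ubl_{h|\zeta=0} = -\eps\,\gamma\sigma = -\eps^{-1}\sigma$ (minus the contribution of $\d_z\uint_h$, which is small). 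This is where the $1/\sqrt\eps$ in the estimate $\|\ubl_h\|_{L^2}\le C\eps^{-1/2}\|\sigma\|_{H^1}$ comes from: the $\zeta$-integral of the profile contributes a factor $\sqrt\eps$ while the amplitude carries $\eps^{-1}\|\sigma\|$, and the boundary-layer thickness depends on $b(y)$, so vanishing of $b$ again worsens the decay rate and one must check that the relevant integrals in $y$ (now of $\sigma$ divided by powers of $b^{1/2}$) converge — here the linear/quadratic bounds on $\sigma$ and $\d\sigma$ are used once more. The pressure in the boundary layer $\pbl$ is then determined from the vertical equation $\eps^{-2}\d_z\pbl \approx \text{(boundary layer terms)}$, and $\ubl_3$ is obtained from incompressibility, which by the gain of one $\zeta$-integration is of order one rather than $\eps^{-1/2}$, giving the third estimate.

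Having the explicit profiles, the final step is to plug $\ustat=\uint+\ubl$, $\pstat$ into the system and identify the remainders $r_h^1,r_h^2,r_3^1,r_3^2$. The terms $r^1$ collect the errors from the matched asymptotics that are bounded in $L^2$ directly (e.g. $\nu_h\Delta_h\uint_h$, the curvature from the $z$-dependent cutoff joining $\uint$ and $\ubl$, and the lower-order boundary-layer corrections); the terms $r^2$ collect the horizontal-viscosity contributions acting on the singular-in-$y$ boundary layer, which are only controlled in $H^{-1}(\omega_h)$ — one integrates by parts in $x_h$ to move a derivative onto a test function, and the hypothesis $\nu_h=o(\eps)$ ensures these are $o(\sqrt{\nu_h})$ (resp. $o(\eps^{-1}\sqrt{\nu_h})$ for the vertical component, which carries an extra $\eps^{-1}$ from the vertical pressure scaling). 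I expect the main obstacle to be precisely the interplay between the singularity of $b$ at $y=0$ and the boundary layer: one cannot simply invoke a standard Ekman-layer lemma because the layer thickness $|b(y)|^{-1/2}$ degenerates, so a careful choice of where to truncate near $y=0$ (and matching the resulting error, controlled via the quadratic vanishing of $\sigma$, against the viscous terms) is needed, and verifying that all the weighted integrals $\int |\sigma|^2 |b|^{-k}\,dy$ and their $H^1$-analogues converge under \eqref{hyp:sigma_quadra} is the technical heart of the argument. A secondary difficulty is solving the interior elliptic problem for $\pstat$ on the unbounded domain $\bT\times\bR$ with the degenerate coefficient $b(y)$, where one must set up the right weighted energy space and use \eqref{compatibility} to kill the obstruction coming from the $\bT$-average.
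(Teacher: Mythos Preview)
Your overall architecture (matched asymptotics = Ekman boundary layer + geostrophic interior) and your identification of the main technical obstacle (the degeneracy of the layer thickness $|b(y)|^{-1/2}$ at $y=0$, handled by a truncation $b\to b_\delta$ together with the quadratic vanishing of $\sigma$) are exactly right, and the paper follows this route. But there is a genuine gap in how you couple the two pieces. You propose to build the interior first by solving ``an elliptic (or transport-type) equation for $\pstat$'' and only then construct the boundary layer. This cannot work as stated: the interior geostrophic system $b(\uint_h)^\bot+\nabla_h p=0$, $\d_z p=0$, $\nabla\cdot\uint=0$ with $\uint_{3|z=0}=0$ is homogeneous and underdetermined --- the wind $\sigma$ never appears in it, so the only solution compatible with your description is $\uint=0$. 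In the paper the order is reversed and the coupling is explicit: the boundary layer is built first, its non-vanishing vertical trace $\ubl_{3|z=1}$ (the Ekman pumping velocity, computed to be $w=b^{-1}\rot_h\sigma+b^{-2}\sigma^\bot\!\cdot\!\nabla b$) is then imposed as the boundary condition $\uint_{3|z=1}=-w$, and \emph{this} is what forces the interior. Without this step the condition $u_{3|z=1}=0$ in \eqref{top} is never enforced and the interior has no drive.

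A second, related misconception: once the Ekman pumping $w$ is known, the interior problem is \emph{not} an elliptic equation for $p$ in a weighted space. Taking the curl of the geostrophic balance gives $\Div_h(b\,\uint_h)=0$, which with $b=b(y)$ and the divergence constraint reduces to the algebraic Sverdrup relation $b'(y)\,\uint_2=b(y)\,w$; then $\uint_3=zw$ and $\uint_1$ is obtained by integrating $\d_x\uint_1=-\d_y\uint_2-w$ in $x$. The compatibility condition \eqref{compatibility} is used precisely here, to guarantee that the right-hand side has zero $x$-mean so that $\uint_1$ is periodic --- not as a Fredholm condition for an elliptic problem. So the ``secondary difficulty'' you flag (weighted elliptic theory on $\bT\times\bR$) does not actually arise; the interior construction is a direct computation, and the technical weight of the proof sits entirely in the $L^2$ and $H^1$ estimates for the truncated boundary-layer profiles and the choice of the truncation scale $\delta$ relative to $\eps$ and $\nu_h$.
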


If $\omega_h=\bT^2$, the result remains true under slightly different conditions on $\sigma$ and $b$. More precisely, we assume that $\sigma\in H^2(\bT^2)$ satisfies \eqref{hyp:sigma_quadra}, \eqref{compatibility}, and that 
$$
d(\Supp \sigma, (\bT,1/2))>0
$$
In other words, $\sigma$ vanishes in a neighbourhood of $(x,1/2)$ for all $x\in\bT$ (and by periodicity, in a neighbourhood of $(x,-1/2)$ also).

We assume furthermore that $b(x,y)=b(y)$ with
\be\begin{aligned}
b\in L^\infty(\bT)\text{ and }b\in W^{2,\infty}(K)\ \forall K\subset \bT\text{ compact s.t. }d(K,1/2)>0,\\
b(y)\neq 0 \text{ for }y\neq  0,\text{ and }\exists C>0,\  |b(y)|\geq C \text{ for }|y|\geq 1/4,\\
 b(y)\sim \beta y \text{ for }y\to 0,\\
\forall K\subset \bT\text{ compact s.t. }d(K,1/2)>0,\ \exists c_K>0,\ c_K^{-1}\leq b'(y)\leq c_K\ \forall y\in c_K. 
  \end{aligned}
\label{hyp:Coriolis-bis}\ee

In other words, we do not assume that $b\in W^{2,\infty}(\bT)$: $b$ may have a discontinuity at  $y=1/2  $. But we require that $\sigma $ vanishes in a neighbourhood of that singularity, so that all terms of the type $\sigma b$, $\sigma/b$, $\sigma/b'$ are well-defined and $\bT^2$-periodic.

\begin{Rmk}
\begin{enumerate}[\bf (i)]

\item The assumptions \eqref{hyp:Coriolis}-\eqref{hyp:Coriolis-bis} on the Coriolis factor $b$ is satisfied in two particular cases: 
\begin{itemize}
\item $b(y)=\beta y$, with $\omega_h=\bT\times \bR$: this approximation is particularly relevant for the motion of equatorial currents, and is used in particular in \cite{GSR}, \cite{DMS}. 

	\item $b(y)=\sin(\pi y/2)$, with $\omega_h=\bT^2$: this is the case of a real ocean, whose study takes place on a planetary scale. Of course, in this case, the effect of the curvature of the Earth should be taken into account, which we have chosen not to do here (see the discussion in the previous paragraph). 

\end{itemize}

 \item Notice that in the above Theorem, it is assumed that the surface stress vanishes near $y=0$. Although this assumption stems from mathematical considerations, it is in fact quite reasonable in an oceanographic context. Indeed, it is a well-known phenomena that there are no steady surface winds near the equator: as trade winds coming from the North and South meet, they are heated and produce upward winds. The area of calm in the vicinity of the equator is called the \textit{Doldrums}.

\item The compatibility condition \eqref{compatibility} means that there is no zonal average wind. This condition is of course not realistic from a physical point of view, but it is the price to pay for working with a domain with no boundary in $x$. If the horizontal domain $\omega_h$ is replaced by $[0,1]\times \bT$ or $[0,1]\times \bR$, this condition disappears; the (mathematical) counterpart lies in the construction of the horizontal boundary layer terms, the so-called Munk layers discussed in the previous paragraph.

\item In general, the size of the boundary layer term $\ubl$ is much larger than that of the interior term. This means that the greatest part of the energy is concentrated in a boundary layer located in the vicinity of the surface. In  the original variables, it can be checked that the boundary layer carries an energy of order $\rho_0 U^2 H^3$, while the energy contained in the interior of the domain is of order $\rho_0 U^2 H^2 D$.

This is in fact a consequence on the requirements on $\ubl$, $\uint$, and not an artefact of our model. Indeed, assume that the functions $\ubl,\uint$ are such that 
$$\begin{aligned}
  \|\uint_{3|z=1} \|_{L^2(\omega_h)}\sim \|\uint_{h|z=1} \|_{L^2(\omega_h)}\sim \|\uint_{h} \|_{L^2(\om)},\\
\uint_{3|z=1} =- \ubl_{3|z=1},
  \end{aligned}
$$
and assume that $\ubl,\uint$ are divergence free and that $\ubl$ is located in a boundary layer of size $\delta_{E}$ (where $E$ stands for `Ekman') near the surface. Denote by $A^{BL}_{h}, A^{BL}_3$ the size of $\ubl_h,\ubl_3$ in $L^\infty$, and by $A^{int}$ the size of $\uint$ in $L^2(\omega).$

The assumptions above entail that $A^{BL}_3=A^{int}$; on the other hand, since $\ubl$ is divergence free, we have
$$A^{BL}_h=\frac{1}{\delta_E} A^{BL}_3=\frac{1}{\delta_E}A^{int}.$$

Consequently, since 
$$u^{BL}_h\sim A^{BL}_h\exp\left(-\frac{1-z}{\delta_E}  \right),$$
we infer that
$$
\| \ubl_h\|_{L^2(\omega)}=\sqrt{\delta_E} A^{BL}_h = \frac{1}{\sqrt{\delta_E}}A^{int}.
$$
Thus the energy in the boundary layer is always larger than the energy in the interior of the fluid with this type of model. The assumption that $\|\uint_{3|z=1} \|\sim\|\uint_{h|z=1}\|$ stems from observations of the isothermal surfaces in the ocean, as we will explain in the next paragraph. From a physical point of view, having $\|\ubl\|$ much larger than $\|\uint\|$ is in fact quite reasonable: indeed, it is observed that subsurface currents generally travel at a much slower speed when compared to surface flows.

\end{enumerate}
 
\label{rem:stat_sol}

\end{Rmk}
 
Let us also emphasize that in the case of the $f$-plane model (i.e. when the   rotation vector $b$ is constant), the result of Theorem \ref{thm:stat} is false in general. Indeed, the interior part of the solution must satisfy the geostrophic system, namely
$$
\ba
u_h^\bot + \nabla_h p=0,\\
\Div_h u_h + \d_z u_3=0,\\
\d_z p=0,
\ea
$$
and thus $u$ is a two-dimensional divergence free vector field. In other words, $u_3\equiv 0$  and thus the Ekman pumping velocities must be zero at first order. Consequently, the  interior part of the solution cannot be wind-driven at first order. 

\vskip1mm

We now address the question of the stability of the stationary solution constructed above:
\begin{Thm}[Waves associated with equation \eqref{NS}]
Assume that $\omega_h=\bT\times \bR$, and that $b(x_h)=\beta y$ for all $x_h=(x,y)\in\omega_h.$

For any $\eps>0$, let $v^\eps$ be a solution to the propagation equation
$$\ba
\d_t u + \frac{1}{\eps } b(x_h) \wedge u + \begin{pmatrix}
                                            \nabla_h p \\ \frac{1}{\eps^2 } \d_z p
                                           \end{pmatrix} - \nu_h \Delta_h u -\eps \d_{zz} u=0,\\(x_h,z)\in\omega_h\times (0,1),
\ea
$$
with $\nu_h= O(\eps^2)$, supplemented with homogeneous boundary conditions
$$
\d_z u_{h|z=0}=\d_z u_{h|z=1}=0,\quad u_{3|z=0}=u_{3|z=1}=0.
$$
Then $v^\eps$ can be decomposed as the sum of 
\begin{itemize}
	\item a stationary part $\bar v^\eps(t,y) = \int_{\bT}\int_0^1 v^\eps(t,x,y,z) \: dx\:d z$, which satisfies
$$\d_t \bar v^\eps-\nu_h \d_y^2 \bar v^\eps=0, $$
\item{\bf Rossby waves} $v^\eps_R=\int v^\eps dx_3 - \bar v^\eps$ corresponding to the 2D vorticity propagation
$$
\d_t \zeta^\eps_R+ \frac{\beta}{\eps}\d_x \Delta_h^{-1} \zeta^\eps_R-\nu_h \Delta_h \zeta^\eps_R =0,
$$
where $\zeta_R^\eps=\rot_h v^\eps_R$, 
\item and {\bf gravity waves} $v^\eps_G=v^\eps-\int v^\eps dx_3$.
\end{itemize}

Rossby and Gravity waves have a dispersive behaviour as $\eps$ vanishes:
\begin{itemize}
\item Rossby waves disperse on a small time scale
$$\forall t>0 ,\quad \forall K\subset\subset  \omega,\quad \| v_R^\eps(t)\|_{L^2(K)} \to 0 \hbox{ as } \eps \to 0\,,$$
since we have assumed that $y\in \bR$;
\item Gravity waves generate fast oscillations with respect to $y$, which slows down the propagation
$$ \forall K\subset\subset  \omega,\quad \| v_G^\eps(t)\|_{L^2(K)} \to 0 \hbox{ as } (\eps,t) \to (0,\infty)\,.$$
\end{itemize}
\label{thm:propagation}
\end{Thm}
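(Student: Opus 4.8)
\textbf{Proof proposal for Theorem \ref{thm:propagation}.}

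The plan is to decompose $v^\eps$ according to the natural orthogonal splitting induced by the penalization operator and by the vertical averaging, and then to treat each piece by its own dispersive mechanism. First I would verify the algebraic decomposition $v^\eps = \bar v^\eps + v_R^\eps + v_G^\eps$: the average $\bar v^\eps(t,y)$ depends only on $y$ by the compatibility/periodicity structure, and applying $\int_{\bT}\int_0^1\,dx\,dz$ to the evolution equation kills the Coriolis term (since $\int_{\bT}\d_x p\,dx=0$ and $b(y)(\bar v_h^\eps)^\bot$ integrates against the constant-in-$x$ part — here one checks that the $y$-average of $\beta y (\bar v_h)^\bot$ is absorbed into a pressure gradient, or vanishes by the structure of the limiting equation), leaving the heat equation $\d_t\bar v^\eps-\nu_h\d_y^2\bar v^\eps=0$. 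Next, projecting onto the $z$-independent part minus the average gives the 2D barotropic component; taking $\rot_h$ of the horizontal equation and using $\rot_h(\beta y\, v_h^\bot) = \beta\,\d_x\psi$ with $v_h=\nabla_h^\bot\psi$, i.e. $\psi=\Delta_h^{-1}\zeta_R^\eps$, produces the quasi-geostrophic/Rossby equation $\d_t\zeta_R^\eps+\frac{\beta}{\eps}\d_x\Delta_h^{-1}\zeta_R^\eps-\nu_h\Delta_h\zeta_R^\eps=0$. The gravity (Poincaré) part $v_G^\eps$ is what remains, and it is governed by a $z$-dependent wave system with the full $\frac{1}{\eps^2}\d_z p$ term.

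For the dispersive decay of Rossby waves, I would work on the Fourier side in $x\in\bT$ and $y\in\bR$. The dispersion relation is $\omega_R(k,\xi) = -\frac{\beta}{\eps}\,\frac{k}{k^2+\xi^2}$ for the mode $e^{i(kx+\xi y)}$; since $y\in\bR$, $\xi$ ranges over a continuum, so one has genuine dispersion rather than a discrete waveguide. The key point is non-degeneracy of $\d_\xi\omega_R = \frac{\beta}{\eps}\,\frac{2k\xi}{(k^2+\xi^2)^2}$, which is nonzero for $\xi\neq 0$ and $k\neq 0$; a stationary-phase (or more robustly, a $TT^*$ / Strichartz-type) argument then gives local-in-space decay $\|v_R^\eps(t)\|_{L^2(K)}\to 0$ for each fixed $t>0$ as $\eps\to 0$, because the semiclassical parameter $\eps$ forces the phase $\frac{t}{\eps}\omega_R$ to oscillate rapidly; the low-frequency contribution $\xi\to 0$ (the would-be waveguide) carries no $L^2$ mass precisely because the $y$-variable is unbounded. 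The viscous term $\nu_h\Delta_h$ with $\nu_h=O(\eps^2)$ only helps and can be handled by a Duhamel estimate or simply discarded for an upper bound after noting it commutes with the Fourier multipliers.

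The heart of the proof — and the main obstacle — is the gravity-wave part, where the thin-layer coupling $\frac{1}{\eps^2}\d_z p$ acts on the same footing as the $O(\eps^{-1})$ rotation and forces the creation of small scales in $y$. Here I would follow the semiclassical strategy of \cite{CGPSR,CGPSR-papier}: expand $v_G^\eps$ in the vertical eigenbasis (say $\cos(n\pi z)$, $\sin(n\pi z)$ compatible with the homogeneous Navier/no-penetration conditions), and for each vertical mode $n\geq 1$ obtain a $2$D Klein–Gordon-type / Poincaré operator whose symbol, after rescaling, involves $\beta y$ as a \emph{linear potential}. The operator is of the form $\d_t + \frac{i}{\eps}\,a(y,\eps D_{x_h})$ where the symbol has the structure of a harmonic-oscillator-type Hamiltonian in the $(y,\xi)$ variables (the linear Coriolis term $\beta y$ plus the quadratic-in-frequency dispersion produces, semiclassically, fast oscillations in $y$ confined to shrinking regions). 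One then constructs a WKB/wave-packet parametrix, identifies the Hamiltonian flow, and shows that the group velocity associated with this flow is $O(1)$ rather than $O(\eps^{-1})$ — the physical content of the theorem — and that consequently any wave packet starting in a fixed compact set $K$ either leaves $K$ in bounded time or, crucially, develops $\eps$-scale oscillations in $y$ that make it converge weakly to zero; averaging over the $z$-modes and summing (with control of high vertical modes via the $\nu_h\d_{zz}$ smoothing and energy bounds) yields $\|v_G^\eps(t)\|_{L^2(K)}\to 0$ as $(\eps,t)\to(0,\infty)$. The delicate points I anticipate are: (a) justifying the parametrix construction uniformly in the vertical mode index $n$ and summing the resulting series; (b) handling the turning points of the classical flow where $\beta y$ changes sign (near the equator $y=0$), which is exactly where the symbol degenerates and where one must use a finer normal form; and (c) making the non-stationary-phase / egorov argument quantitative enough to get the stated limit in the joint regime $(\eps,t)\to(0,\infty)$ rather than for fixed $t$.
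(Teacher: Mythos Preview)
Your proposal is essentially correct and follows the same strategy as the paper: the vertical and $x$-averaging decomposition, the derivation of the Rossby vorticity equation with Fourier multiplier $\omega_R(k,\xi)=\beta k/(k^2+\xi^2)$ and stationary-phase decay in the continuous variable $\xi\in\bR$, and the semiclassical treatment of the baroclinic Poincaré modes in the spirit of \cite{CGPSR,CGPSR-papier} all match Sections~\ref{sec:2D}--\ref{sec:3D}. One small clarification on your hedged step for the stationary part: the Coriolis term $\beta y\,v_2$ in the $x$-averaged first equation vanishes not by pressure absorption but because the 2D divergence-free constraint together with $L^2(\bR_y)$ integrability forces $\int_\bT v_2\,dx\equiv 0$ (this is how Lemma~\ref{lem:prop2D} proceeds, and it is also what guarantees the decoupling of the $k_3=0$ mode from the baroclinic modes once one observes that the linear equation does not mix vertical Fourier modes).
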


\begin{Rmk}$ $
\begin{itemize}
	\item Notice that the energy associated with gravity (or Poincar\'e) waves propagates on a time scale much larger than the one of Rossby waves. This is due to the thin layer effect, which causes the apparition of small scales in the variable $y$. 
\item 
The field $\bar v^\eps$ is said to be ``stationary'' because the horizontal viscosity $\nu_h$ is small: hence
$$
\bar v^\eps(t)\approx \bar v^\eps_{|t=0}\quad \text{in }L^2 $$
on time scales of order one.
\end{itemize}

\end{Rmk}

\begin{Cor}

Assume that $\omega_h=\bT\times \bR$, and that $b(x_h)=\beta y$ for all $x_h=(x,y)\in\omega_h.$ Assume that $\nu_h=O(\eps^2).$

 For any $\eps>0$, let $u^\eps$ be a solution of \eqref{rescaled} supplemented with \eqref{bottom}-\eqref{top}, and assume that
$$
\sup_{\eps>0} \| u^\eps_{h|t=0}- \ustat_h \|_{L^2(\omega)} + \eps \| u^\eps _{3|t=0}- \ustat_3 \|_{L^2(\omega)}<+\infty.
$$
Then for any finite time $t>0$, 
$$
u^\eps(t)- \ustat \sim \bar v^\eps(t) + v_G^\eps(t) \hbox{ in } L^2_{loc}(\omega)
$$
where $v_G^\eps$ is the (slow propagating and fast oscillating) gravity part of the velocity field $v^\eps$ defined in Theorem \ref{thm:propagation}, and $\bar v^\eps$ is the stationary part of $v^\eps$. 

(Note in particular that the vertical component of the velocity $u_\eps$ is not expected to be bounded - as is usually claimed for shallow water approximation.)

\end{Cor}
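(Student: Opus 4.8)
The plan is to use the linearity of \eqref{rescaled}: with $\eta=\nu_z=\eps$ and $\gamma=\eps^{-2}$, set $v^\eps:=u^\eps-\ustat$ and $q^\eps:=p^\eps-\pstat$. Since $\ustat$ is time-independent and satisfies, by Theorem~\ref{thm:stat}, the stationary equations up to the residuals $r^1+r^2$ (write $r^i=(r^i_h,r^i_3)$), subtracting from \eqref{rescaled} shows that $v^\eps$ solves exactly the propagation equation of Theorem~\ref{thm:propagation} with a time-independent source:
\[
\d_t v^\eps+\frac1\eps b(x_h)\wedge v^\eps+\begin{pmatrix}\nabla_h q^\eps\\ \frac1{\eps^2}\d_z q^\eps\end{pmatrix}-\nu_h\Delta_h v^\eps-\eps\,\d_{zz}v^\eps=-(r^1+r^2).
\]
Moreover $\ustat$ and $u^\eps$ carry the same boundary data \eqref{bottom}--\eqref{top}, so $v^\eps$ obeys the homogeneous Navier conditions, and by hypothesis $\|v^\eps_{h|t=0}\|_{L^2(\omega)}+\eps\|v^\eps_{3|t=0}\|_{L^2(\omega)}$ is bounded uniformly in $\eps$. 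I would split $v^\eps=w^\eps+z^\eps$, where $w^\eps$ solves the \emph{homogeneous} propagation equation with datum $v^\eps_{|t=0}$ and $z^\eps$ solves it with source $-(r^1+r^2)$ and zero datum. Theorem~\ref{thm:propagation} applies to $w^\eps$ and gives $w^\eps=\bar w^\eps+w^\eps_R+w^\eps_G$, with the Rossby part dispersing: $\|w^\eps_R(t)\|_{L^2(K)}\to0$ as $\eps\to0$ for every fixed $t>0$ and $K\subset\subset\omega$ (here one uses $\omega_h=\bT\times\bR$). Since the decomposition $v^\eps=\bar v^\eps+v^\eps_R+v^\eps_G$ of the corollary is defined by the same linear projections, $v^\eps_R=w^\eps_R+z^\eps_R$; hence $u^\eps(t)-\ustat-\bar v^\eps(t)-v^\eps_G(t)=v^\eps_R(t)=w^\eps_R(t)+z^\eps_R(t)$, and it remains to prove that $z^\eps(t)\to0$ in $L^2_{loc}(\omega)$ for each fixed $t$.

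For the source part $z^\eps$, I would run an anisotropic energy estimate with $E^\eps:=\|z^\eps_h\|_{L^2(\omega)}^2+\eps^2\|z^\eps_3\|_{L^2(\omega)}^2$: multiplying the horizontal equation by $z^\eps_h$ and the vertical one by $\eps^2z^\eps_3$ and adding, the Coriolis term disappears by skew-symmetry (its vertical component being zero) and the pressure term by $\Div_h z^\eps_h+\d_z z^\eps_3=0$ with the boundary conditions, leaving
\[
\frac12\frac{d}{dt}E^\eps+\mathcal D^\eps=-\la r^1_h+r^2_h,\,z^\eps_h\ra_{L^2}-\eps^2\la r^1_3+r^2_3,\,z^\eps_3\ra_{L^2},
\]
where $\mathcal D^\eps=\nu_h\|\nabla_h z^\eps_h\|^2+\eps\|\d_z z^\eps_h\|^2+\eps^2\nu_h\|\nabla_h z^\eps_3\|^2+\eps^3\|\d_z z^\eps_3\|^2\ge0$. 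Using the bounds of Theorem~\ref{thm:stat}: the $L^2$ residuals give $|\la r^1_h,z^\eps_h\ra|\le o(1)\|z^\eps_h\|_{L^2}$ and $\eps^2|\la r^1_3,z^\eps_3\ra|\le o(\eps)\|z^\eps_3\|_{L^2}=o(1)\,\eps\|z^\eps_3\|_{L^2}$ --- the weight $\eps^2$ is exactly what tames the large residual $r^1_3=o(\eps^{-1})$; the $H^{-1}$ residuals are paired in $L^2_zH^1(\omega_h)$, and since $\nu_h=O(\eps^2)$ (so $\sqrt{\nu_h}=O(\eps)$) their gradient contributions are absorbed into $\mathcal D^\eps$ by Young's inequality, leaving again $o(1)(E^\eps)^{1/2}+o(1)$. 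One gets $\frac{d}{dt}E^\eps\le o(1)\big((E^\eps)^{1/2}+1\big)\le o(1)(E^\eps+1)$, and since $E^\eps(0)=0$, Gronwall gives $E^\eps(t)=o(1)$ for each fixed $t$. Thus $z^\eps_h(t)\to0$ in $L^2(\omega)$ and $\eps z^\eps_3(t)\to0$ in $L^2(\omega)$; combining the latter with $z^\eps_3=-\int_0^z\Div_h z^\eps_h$ also yields $z^\eps_3(t)\to0$ in $L^\infty_zH^{-1}(\omega_h)$. Since $z^\eps_R$ is built from $z^\eps$ by these same (bounded) projections, $z^\eps_R(t)\to0$ in $L^2_{loc}(\omega)$ (strongly for the horizontal component, in the weighted/weak sense for the vertical one), and the corollary follows; the parenthetical remark is consistent with this, since $v^\eps_G$ contains $w^\eps_G$, whose vertical part is of order $\eps^{-1}$ by Theorem~\ref{thm:propagation}, so $u^\eps_3$ is genuinely unbounded.

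The main obstacle is this last energy estimate: the vertical residual $r^1_3$ produced by Theorem~\ref{thm:stat} is only $o(\eps^{-1})$ in $L^2$, of the same critical order as $u^\eps_3$ itself, so a plain $L^2$ balance is useless; the remedy is the anisotropic weight $\eps^2$ on $z^\eps_3$, which encodes that $r^1_3$ enters only the vertical momentum equation and is compensated by the pressure at order $\eps^2$. The price is that the negative-order residuals $r^2_h,r^2_3$ must then be swallowed by the horizontal viscous dissipation, which is why the sharper scaling $\nu_h=O(\eps^2)$ is required here (rather than $\nu_h=o(\eps)$ as in Theorem~\ref{thm:stat}). One should also note that Theorem~\ref{thm:propagation} carries no restriction on the size of the solution, so it legitimately applies to $w^\eps$ even though its vertical initial datum may be as large as $\eps^{-1}$ in $L^2$.
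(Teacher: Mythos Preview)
Your proposal is correct and is precisely the approach the paper intends: the paper's own proof is the single sentence ``The above Corollary is an immediate consequence of Theorems~\ref{thm:stat} and~\ref{thm:propagation}, together with the energy inequality,'' and your argument---subtract $\ustat$, control the residual-driven part $z^\eps$ by the anisotropic energy $\|z^\eps_h\|_{L^2}^2+\eps^2\|z^\eps_3\|_{L^2}^2$, then invoke Theorem~\ref{thm:propagation} on the homogeneous part---is exactly what that sentence encodes. You have in fact supplied considerably more detail than the paper does, including the careful bookkeeping of how the $o(\sqrt{\nu_h})$ residuals are absorbed by the dissipation and why the weight $\eps^2$ on the vertical component is the right one.
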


The above Corollary is an immediate consequence of Theorems \ref{thm:stat} and \ref{thm:propagation}, together with the energy inequality.

\subsection{Towards a mathematical derivation of the thermocline}\label{ssec:thermocline}$ $

In this paragraph, we try to justify the shape of the surfaces of equal temperature in the ocean, in view of the results of Theorem \ref{thm:stat}. 

The isothermal surface which is located just below the Ekman boundary layer is of special interest to oceanographers, due to its importance on the global oceanic circulation (see \cite{Pedlosky1,Pedlosky2,LPS}).
Figure \ref{fig:thermocline} below shows the longitudinal variations of the temperature in the Pacific ocean in a layer of $1000\;\mathrm{m}$ depth below the surface.
\begin{figure} [h] %  figure placement: here, top, bottom, or page
   \centering
   \includegraphics{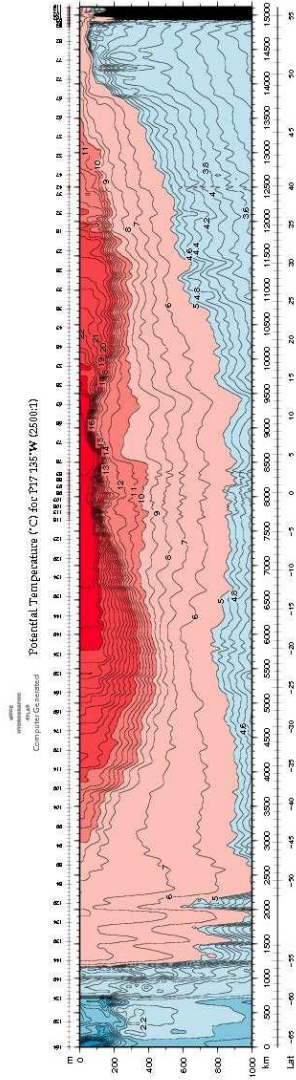} 

\caption{Longitudinal section of the surfaces of equal temperature in the Pacific ocean \textit{(from the WOCE Pacific Ocean Atlas).}}

\label{fig:thermocline}
\end{figure}
In particular, there are zones in which the temperature surfaces are tilted up (that is, there is a flux of cold water towards the surface); this phenomenon cannot always be accounted for by the heating differences at the surface, as shows the upward flux of cold water in the equatorial zone. The physical justification of these particular shapes is the following: inside the ocean, the temperature $T$ solves an equation of the kind
$$
u^*\cdot \nabla T -\kappa \Delta T=0,
$$
where $u^*$ is the velocity of oceanic currents (in the dimensional variables) and $\kappa$ is the heat conductivity coefficient. If the temperature diffusion can be neglected, this equation takes the form $$ u^* \cdot \nabla T=0,$$ which means that $u$ is a tangent vector to the isothermal surfaces. Consequently, the temperature surfaces are tilted up (or down) if and only if $u^*_{3|\text{surface}}\neq 0$, or more precisely, if $|u_{3|z=1}|/|u_{h|z=1}|=O(1)$ in rescaled variables. This justifies the assumption $$\|\uint_{3|z=1}\|\sim \|\uint_{h|z=1}\| $$ in the previous paragraph (see Remark \ref{rem:stat_sol} \textbf{(iv)}).

In that regard, the special solution constructed in Theorem \ref{thm:stat} is of particular interest. Indeed, in rescaled variables, we have (see section \ref{sec:int})
$$
\uint_{3|z=1}=-\frac{1}{b}\rot_h\sigma - \frac{b'}{b^2}\sigma_1.
$$
Hence $u_{3|\text{surface}}\neq 0,$ and our model predicts that the temperature surfaces are indeed modified by the Ekman pumping velocity.

We now give a rigorous result about the asymptotic shape of the temperature in our model. 
We denote with a star the original variables. We write 
$$
T(x_h^*, z^*)=T_0 + T_1 \theta\left(\frac{x_h^*}{H}, \frac{z^* }{D} \right),
$$
with the same notations as in paragraph \ref{ssec:motivation}. The temperature $T_0$ is a reference temperature (for instance, $T_0=10^\circ\mathrm{C}$), whereas $T_1$ is the order of magnitude of the variations of the temperature. Performing the same change of variables as in paragraph \ref{ssec:motivation}, we obtain
$$
u\cdot \nabla \theta - \lambda \eta^2 \Delta_h \theta - \lambda\d_{zz}\theta=0,
$$
where the diffusion coefficient $\lambda$ is given by
$$
\lambda=\frac{\kappa L}{D^2 U}.
$$
We recall that $\eta$ is the aspect ratio of the domain; as in Theorems \ref{thm:stat} and \ref{thm:propagation}, we take $\eta=\eps.$  With the notation of Theorem \ref{thm:stat}, our result is the following:

\begin{Prop}Let $\lambda>0.$
Assume that the wind stress $\sigma\in H^s(\omega_h)$ is such that 
\be
\begin{aligned}
   \left| \sigma(x,y) \right|\leq C y^k\quad\forall (x,y)\in\omega_h,\\
\left| \nabla \sigma(x,y)  \right| \leq C |y|^{k-1}\quad\forall (x,y)\in\omega_h,
  \end{aligned}
\ee
for some $k,s\geq 2$ chosen sufficiently large, and assume that 
\be\label{hyp:u_h_lambda}\| \nabla_h\uint_h\|_{L^\infty(\omega)} \leq \frac{\lambda}{4}.\ee
Let $\theta$ be the solution of the equation
\be\label{eq:theta}
\ustat \cdot \nabla \theta - \eta^2\lambda\Delta_h \theta - \lambda \d_{zz} \theta=0,
\ee
supplemented with the boundary conditions
\be\label{BC-theta}
\ba
\theta_{|z=1}=\theta_1,\quad\d_z \theta_{|z=0}=0,
\ea
\ee
for some function $\theta_1\in H^2(\omega_h)$.

Define the function $\theta^{app}$ by
$$
\theta^{app}(x_h,z)= \bar \theta (x_h,z) + \eps \theta^{BL}\left(x_h,\frac{1-z}{\eps}  \right),
$$
where $\bar \theta$, $\theta^{BL}$ are solutions of
\be\label{eq:btheta}
\ba
-\lambda \d_{zz} \bar \theta + \uint\cdot \nabla \bar \theta =0 \quad\text{in }\omega,\\
\bar \theta_{|z=1}=\theta_1,\quad\d_z \bar \theta_{|z=0}=0,
\ea
\ee
and
$$
\ba
-\lambda \d_{\zeta\zeta} \theta^{BL}(x_h,\zeta) + \eps \ubl_h(x_h,1-\eps\zeta)\cdot \nabla_h \theta_1=0,\\
\theta^{BL}(x_h,\zeta)\underset{\zeta\to\infty}{\longrightarrow} 0.
\ea
$$

Then as $\eps\to 0$,
$$
\| \theta - \theta^{app}\|_{L^2(\omega)} + \| \d_z (\theta - \theta^{app})\|_{L^2(\omega)} \to 0.
$$
\label{prop:thermocline}
\end{Prop}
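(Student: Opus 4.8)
\emph{Proof strategy.} This is a boundary-layer expansion for a convection--diffusion equation whose drift $\ustat=\uint+\ubl$ is singular near the surface, so the natural scheme is: build the approximate solution (already given as $\theta^{app}$), estimate the residual, and close an energy estimate on $R:=\theta-\theta^{app}$. Write $\cL_\eps\phi:=\ustat\cdot\nabla\phi-\eps^2\lambda\Delta_h\phi-\lambda\partial_{zz}\phi$, so $\cL_\eps\theta=0$ with $\theta_{|z=1}=\theta_1$, $\partial_z\theta_{|z=0}=0$. First I would record the preliminary facts. Existence and uniqueness of $\theta$ for fixed $\eps>0$ follows from Lax--Milgram after subtracting a lift of $\theta_1$: since $\ustat$ is divergence free and $\ustat_3$ vanishes on $\{z=0\}$ and $\{z=1\}$, the drift contributes nothing to the Dirichlet form, which is coercive by the one-dimensional Poincar\'e inequality in $z$ (the $\eps^2\lambda\Delta_h$ term, degenerate as $\eps\to0$, is harmless at fixed $\eps$). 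Next, under \eqref{hyp:sigma_quadra} with $k,s$ large and $\theta_1\in H^2(\omega_h)$, the profiles $\bar\theta$ and $\theta^{BL}$ are well defined with the regularity needed below: $\bar\theta\in W^{2,\infty}(\omega)$ with $\partial_z\bar\theta$, $\nabla_h\nabla_h\bar\theta$, $\partial_z\nabla_h\bar\theta$ bounded (here \eqref{hyp:u_h_lambda} makes $-\lambda\partial_{zz}+\uint\cdot\nabla$ coercive on $\{\,\partial_z\!\cdot_{|z=0}=0,\ \cdot_{|z=1}=0\}$, and higher regularity is obtained by differentiating and bootstrapping -- this is what forces $k,s$ large), while $\theta^{BL}(x_h,\cdot)$ and its derivatives decay and have amplitude $O(1)$ in $\zeta$.

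\emph{The residual.} Plugging $\theta^{app}$ into $\cL_\eps$ and using \eqref{eq:btheta} and the equation defining $\theta^{BL}$, the leading terms cancel by design: $-\lambda\partial_{zz}\bar\theta$ cancels $\uint\cdot\nabla\bar\theta$, and the rescaled $-(\lambda/\eps)\partial_{\zeta\zeta}\theta^{BL}$ cancels $\ubl_h(x_h,z)\cdot\nabla_h\theta_1$ (after substituting $1-\eps\zeta=z$). What is left is
\[
\cL_\eps\theta^{app}=\ubl_h\cdot\nabla_h(\bar\theta-\theta_1)+\ubl_3\,\partial_z\bar\theta+\eps\,\ustat_h\cdot\nabla_h\theta^{BL}-\ustat_3\,\partial_\zeta\theta^{BL}-\eps^2\lambda\Delta_h\bar\theta-\eps^3\lambda\Delta_h\theta^{BL},
\]
with $\zeta=(1-z)/\eps$ in the boundary-layer terms. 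The crucial observation is that $\bar\theta-\theta_1$ vanishes on $\{z=1\}$, hence $\nabla_h(\bar\theta-\theta_1)=O(1-z)$ there; combined with the localization of $\ubl$ in a layer of width $\eps$ (and the amplitudes $|\ubl_h|\lesssim\eps^{-1}$, $|\ubl_3|\lesssim1$ furnished by the construction of Section \ref{sec:BL}), each term is $O(\sqrt\eps)$ in $L^2(\omega)$; for instance $\|\ubl_h\cdot\nabla_h(\bar\theta-\theta_1)\|_{L^2(\omega)}^2\lesssim\int_0^1(\eps^{-1}e^{-c(1-z)/\eps}(1-z))^2\,dz\lesssim\eps$, and similarly for the others. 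Thus $\cL_\eps\theta^{app}\to0$ in $L^2(\omega)$. Moreover $\theta^{app}_{|z=1}=\theta_1+\eps\theta^{BL}(x_h,0)$ and $\partial_z\theta^{app}_{|z=0}=-\partial_\zeta\theta^{BL}(x_h,1/\eps)$, so $\theta^{app}$ misses the boundary data of $\theta$ only by $O(\eps)$ at $z=1$ and by an exponentially small amount at $z=0$.

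\emph{Energy estimate.} Set $R=\theta-\theta^{app}$ and subtract the lift $\Phi(x_h,z):=-\eps\theta^{BL}(x_h,0)\psi(z)$ with $\psi(1)=1$, $\psi'(0)=0$, so $\|\Phi\|_{H^1(\omega)}=O(\eps)$. Then $\tilde R:=R-\Phi$ solves $\cL_\eps\tilde R=-\tilde\rho$ with $\tilde\rho=\cL_\eps\theta^{app}+\cL_\eps\Phi=o(1)$ in $L^2(\omega)$ (the terms $\ubl\cdot\nabla\Phi$ being again $O(\sqrt\eps)$ by the layer structure), and $\tilde R_{|z=1}=0$, $\partial_z\tilde R_{|z=0}$ exponentially small. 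Multiplying by $\tilde R$ and integrating over $\omega$: the drift term vanishes since $\Div\ustat=0$ and $\ustat_3=0$ on $z=0,1$; the boundary term at $z=1$ vanishes since $\tilde R_{|z=1}=0$; the one at $z=0$ is absorbed by a standard trace inequality and the exponential smallness; one is left with $\eps^2\lambda\|\nabla_h\tilde R\|_{L^2}^2+\lambda\|\partial_z\tilde R\|_{L^2}^2\le\|\tilde\rho\|_{L^2}\|\tilde R\|_{L^2}+e^{-c/\eps}(\cdots)$. Since $\tilde R_{|z=1}=0$, Poincar\'e in $z$ gives $\|\tilde R\|_{L^2(\omega)}\le C\|\partial_z\tilde R\|_{L^2(\omega)}$, so absorption yields $\|\partial_z\tilde R\|_{L^2}\le C\|\tilde\rho\|_{L^2}+e^{-c/\eps}\to0$, hence $\|\tilde R\|_{L^2}\to0$; undoing the lift gives $\|\theta-\theta^{app}\|_{L^2(\omega)}+\|\partial_z(\theta-\theta^{app})\|_{L^2(\omega)}\to0$. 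On $\omega_h=\bT\times\bR$ the integration by parts is performed on $\bT\times(-M,M)\times(0,1)$ with a cutoff in $y$, letting $M\to\infty$; the commutator terms are controlled by the decay in $y$ of $\sigma$ (hence of $\ustat$, $\bar\theta$, $\theta^{BL}$) coming from \eqref{hyp:sigma_quadra} and the $H^s$ assumption.

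\emph{Main obstacle.} The delicate step is the residual estimate: it is not enough to use the $L^2$ bounds on $\ubl$ from Theorem \ref{thm:stat}; one must exploit the finer structure produced in Section \ref{sec:BL} -- concentration of $\ubl$ in a layer of width $\eps$ and the respective amplitudes of $\ubl_h$ and $\ubl_3$ -- together with the vanishing of $\bar\theta-\theta_1$ at the surface, to see that transport of the interior profile by the \emph{singular} part of the velocity costs only $O(\sqrt\eps)$ in $L^2$. The secondary difficulty is the regularity theory for $\bar\theta$ (and $\theta^{BL}$) making the diffusive remainders and the lift meaningful, which is precisely what dictates the largeness of $k,s$ and the smallness condition \eqref{hyp:u_h_lambda}.
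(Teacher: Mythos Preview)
Your proposal is correct and follows essentially the same route as the paper: a priori regularity for $\bar\theta$ via energy estimates and the smallness condition \eqref{hyp:u_h_lambda}, a residual computation whose dangerous term $\ubl_h\cdot\nabla_h(\bar\theta-\theta_1)$ is tamed by the vanishing of $\bar\theta-\theta_1$ at $z=1$, and an energy estimate on the difference using $\Div\ustat=0$ and Poincar\'e in $z$. The only cosmetic differences are that the paper incorporates the boundary corrector directly into $\theta^{app}$ (adding an affine-in-$z$ term $\eps\tilde\theta$) rather than subtracting a lift afterwards, and handles the key term via a Hardy-type splitting $\|(1-z)\ubl_h\|_{L^\infty_{x_h}L^2_z}\,\|(1-z)^{-1}\nabla_h(\bar\theta-\theta_1)\|_{L^2_{x_h}L^\infty_z}$ rather than the pointwise bound you sketch; your pointwise argument is valid here since $k,s$ are taken large enough to make everything bounded in $x_h$.
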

\begin{Rmk}

\begin{enumerate}[\bf (i)]
	\item The assumption \eqref{hyp:u_h_lambda} on the size of $\nabla_h \uint_h$ is purely technical, and does not have any physical interpretation. It rises from the fact that equation \eqref{eq:btheta} on $\bt$ is degenerate in the horizontal variable; we refer to section \ref{sec:thermocline} for more details. We emphasize in particular that if \eqref{hyp:u_h_lambda} is not satisfied, equation \eqref{eq:btheta} is still well-posed in $L^2(\omega_h, H^1(0,1))$; however, in this case, we are no longer able to prove the convergence.

\item The boundary conditions \eqref{BC-theta} mean that the atmosphere acts like a thermostat for the ocean, and that there is no heat flux at the bottom of the ocean. Both assumptions seem reasonable from a physical point of view, although other boundary conditions might also make sense: for instance, it could also be assumed that  the heat flux at the surface is a given function of the latitude.

\item Let us mention a last direction towards which the physical accuracy of our model could be improved. When considering the spatial variations of the temperature, it would be more reasonable to consider a model which couples the velocity of ocean currents and the temperature, in the spirit of \cite{CT}. However, the relevant scalings within such models are not completely clear. Furthermore, the analysis in Chapter 6 of \cite{Pedlosky1} shows that for such problems, the curvature of the Earth should be taken into account.  
Hence we leave this issue aside in the present paper.

\end{enumerate}

\label{rem:thermocline} 
\end{Rmk}

\vskip1mm

The construction of the article is as follows: in the next two sections, we construct the stationary solution of equation \eqref{rescaled}, starting with the boundary layer part, and then building the interior part by solving the geostrophic equations with a Dirichlet boundary condition on the vertical component. Then, we prove Theorem \ref{thm:propagation} in sections \ref{sec:2D} and \ref{sec:3D}, by treating separately the two-dimensional and three-dimensional parts of the initial data. Eventually, section \ref{sec:thermocline} is dedicated to the proof of Proposition \ref{prop:thermocline}.

\section{The boundary layer part of the stationary solution}

\label{sec:BL}

In this section, we construct functions $\ubl,\pbl$ which are approximate stationary solutions of equation \eqref{rescaled} (in the sense of Theorem \ref{thm:stat}), and which satisfy the horizontal part of the boundary condition \eqref{top}. These functions are located in a boundary layer in the vicinity of the surface $z=1$. Our methodology is the following: we first assume that $\nu_h=0$, and we use the classical construction of Ekman layers in this case. We then derive several estimates on the functions thus obtained. Eventually, we estimate the error terms in equation \eqref{rescaled} which are due to the fact that $\nu_h$ is non zero.

\subsection{Construction in the case $\nu_h=0$}$ $

When the horizontal  viscosity vanishes, the construction of the boundary layer is exactly the same as in the $f$-plane model, i.e. when the function $b$ does not depend on $x_h$. Indeed, in this case the variable $x_h$ is merely a parameter of the equation, and building the boundary layer term amounts to solving an equation on the rate of exponential decay. For more results regarding classical boundary layers, we refer to \cite{CDGG,M,MR,Ro}. Nonetheless, let us stress that even though the construction itself is the same, the estimates become much more involved than in the case of the $f$-plane model. Indeed, the vanishing points of $b$ create singularities, and prevent the boundary layer terms to be in $L^2$ in general. Hence, assumptions on the stress $\sigma$ have to be introduced in order to handle these singularities.

The construction of the boundary layer term is as follows: we wish to construct an approximate solution $(\ubl,\pbl)$ of \eqref{rescaled}, such that \eqref{top} is satisfied. Furthermore, we assume that this approximate solution is small outside a boundary layer located in the vicinity of the surface $z=1$. Hence, we look for $\ubl,\pbl$ in the form
$$\begin{aligned}
  \ubl_h(t,x_h,z)=\Ubl_h\left( x_h,\frac{1-z}{\eps} \right),\\
\pbl(t,x_h,z)=\Pbl\left( x_h,\frac{1-z}{\eps} \right).
  \end{aligned}
$$
We assume that $\Ubl,\Pbl$ together with all their derivatives vanish as $\zeta\to\infty$, where $\zeta$ stands for the  rescaled variable $(1-z)/\eps$. Inserting the above Ansatz into equation \eqref{rescaled} yields
\be\label{eq:BL}
\left\{
\begin{array}{l}
b(x_h)(\Ubl_h)^\bot - \d_\zeta^2\Ubl_h + \eps \nabla_h \Pbl=0,\\
-\d_\zeta^2 \Ubl_3 - \frac{1}{\eps^2}\d_\zeta \Pbl=0,\\
\Div_h \Ubl_h - \frac{1}{\eps}\d_\zeta U_3=0.\end{array}
 \right.
\ee
The last two equations entail that
$$
\Pbl=-\eps^2\d_\zeta \Ubl_3 =-\eps^3 \Div_h \Ubl_h.
$$
We henceforth neglect the pressure term in the equation on $\Ubl_h$. Then, we set, as usual (see for instance \cite{M}), 
$$
U^\pm_h:= U_h\pm i U_h^\bot.
$$
Above and in the rest of the article, for all $u=(u_1,u_2)\in\bR^2$, $u^\bot:=(-u_2,u_1)$.

An easy calculation leads to
$$\begin{aligned}
   -\d_\zeta^2 U_h^\pm \mp i b U_h^\pm=0,\\
\d_\zeta U^\pm_{h|\zeta=0}=-\frac{1}{\eps}(\sigma \pm i \sigma^\bot).
  \end{aligned}
$$
Consequently, $U_h^\pm$ is an exponentially decaying function of the form
$$
U_h^\pm (x_h,\zeta)= \frac{1}{\eps\lambda^\pm(x_h)}(\sigma \pm i \sigma^\bot)(x_h)\exp(-\lambda^\pm(x_h)\zeta),
$$
where the decay rate $\lambda^\pm$ is defined by $$(\lambda^\pm)^2=\mp i b\quad \text{and } \Re(\lambda^\pm)>0,$$
i.e.
\be
\lambda^\pm (x_h)=\lambda^\pm(y)= \frac{1\mp i \sgn(b)}{\sqrt{2}} |b(y)|^{1/2}.\label{def:lambda_pm}
\ee
Notice in particular that the decay rates  $\lambda^\pm$ vanish at $y=0$ and depend only on $y$.

Going back to the definition of $U_h^\pm$, we infer that
\be
\Ubl_h(x_h,\zeta)=\frac{U_h^+ + U_h^-}{2}=\frac{1}{2\eps}\sum_\pm \frac{(\sigma \pm i \sigma^\bot)(x_h)}{\lambda^\pm(x_h)}\exp(-\lambda^\pm(x_h)\zeta).\label{def:Ublh}
\ee
Hence, in order that $\Ubl$ is divergence free, we set
\begin{eqnarray}
\Ubl_3(x_h,\zeta)&=&-\eps \int_\zeta^\infty\Div_h \Ubl_h(x_h,\zeta')d\zeta' \nonumber \\
&=&-\frac{1}{2}\sum_\pm (\Div_h \sigma\mp i\rot_h \sigma)(x_h) (\lambda^\pm(x_h))^{-2} e^{-\lambda^\pm(x_h)\zeta}\nonumber \\
&+&\frac{1}{2}\sum_\pm (\sigma \pm i\sigma^\bot)(x_h)\cdot\frac{\nabla_h\lambda^\pm(x_h)}{(\lambda^\pm(x_h))^3}(2+ \zeta \lambda^\pm(x_h))e^{-\lambda^\pm(x_h)\zeta}.\label{def:Ubl3}
\end{eqnarray}
We have used the convention
$$
\rot_h u_h=-\Div_h u_h^\bot
$$
for two dimensional-vector fields.

The remaining flux term is then given by
\begin{eqnarray}
\ubl_{3|z=1}=\Ubl_{3|\zeta=0}(x_h)\label{Ekman_pumping}
&=&-\frac{1}{2}\sum_\pm (\Div_h \sigma\mp i\rot_h \sigma)(x_h) (\lambda^\pm(x_h))^{-2} \\
&&+\sum_\pm (\sigma \pm i\sigma^\bot)(x_h)\cdot\frac{\nabla_h\lambda^\pm(x_h)}{(\lambda^\pm(x_h))^3}.
\end{eqnarray}

\vskip1mm

We now wish to point out a particular difficulty steming from the above construction. If the Coriolis factor $b$ has vanishing points, which occurs in particular in the case of the $\beta$-plane approximation ($b(x_h)=\beta y$), then the functions $\Ubl_h,\Ubl_3$ may not be square integrable if the function $\sigma$ is arbitrary. Hence, the function $\sigma$ should vanish at a sufficiently high order near $y=0$ so that the singularity disappears. We will check that \eqref{hyp:sigma_quadra} entails that the functions $\Ubl_h,\Ubl_3$ defined by \eqref{def:Ublh}, \eqref{def:Ubl3} are square integrable. For further purposes, we also require that the function $\nabla_h \Ubl$ belongs to $L^2(\omega_h\times [0,\infty)_\zeta).$ Unfortunately, assumption \eqref{hyp:sigma_quadra} is not sufficient to ensure such a result. Thus we introduce an approximate boundary layer term, in which the low values of $b$ have been truncated.

\subsection{Estimates on the boundary layer terms}$ $

We begin with a short justification of the need for a truncation. Using the definition of $\lambda^\pm$ together with assumption \eqref{hyp:Coriolis}, we infer that if $y$ is close to zero, then
\begin{eqnarray*}
\|\nabla_{x_h} \Ubl_3(x_h)\|_{L^2([0,\infty)_\zeta)}&\leq &C\left(\frac{|D^2\sigma(x_h)|}{y^{5/4}} + \frac{|\nabla\sigma(x_h)|}{y^{9/4}} + \frac{|\sigma(x_h)|}{y^{13/4}}\right)	\\
&\leq & C y^{-5/4}.
\end{eqnarray*}
Hence $\nabla_{x_h} \Ubl_3$ does not belong to $L^2(\omega_h\times[0,\infty)_\zeta )$ in general. We thus define, for any $\delta>0,$ the function 
\be\label{truncation}
b_\delta(y)=b(y) \psi\left(\frac{|y|}{\delta}  \right),\quad y\neq 0
\ee
where $\psi\in\mathcal C^\infty((0,\infty))$ is such that
$$\ba
\psi(y)\geq \frac{1}{2}\quad\text{for } y\in(0,\infty),\\
\psi(y)=1\quad\text{if }y\geq 2,\\
\psi(y)=y^{-\alpha}\quad\text{if }y\in (0,1),
\ea
$$
for some exponent  $\alpha\in(0,1)$ to be chosen later on. Notice that with this choice of $\psi$, the function $b_\delta$ behaves like $\delta^\alpha y^{1-\alpha}$ for $y>0$ near zero. Consequently, $b_\delta$ vanishes with a weaker rate than $b$, and thus $\sigma/b_\delta$ vanishes more strongly than $\sigma/b$.

We now define approximated decay rates $\lambda_\delta^\pm$ by replacing $b$ by $b_\delta$ in the expression \eqref{def:lambda_pm}; eventually, we define approximated boundary layer terms by the formulas \eqref{def:Ublh}-\eqref{def:Ubl3}, in which the decay rates $\lambda^\pm$ have been replaced by $\lambda^\pm_\delta$.

We then have the following result:

\begin{Lem}
Assume that hypotheses \eqref{hyp:sigma_quadra}, \eqref{hyp:Coriolis} are satisfied.
Then there exists a constant $C$, depending only on $\sigma$ and $b$, such that  for all $\alpha>0, \delta>0$,
$$\begin{aligned}
   \| \Ubl_{\delta,h}\|_{L^2(\omega_h\times [0,\infty)_\zeta)}\leq \frac{C}\eps\\
 \| \Ubl_{\delta,3}\|_{L^2(\omega_h\times [0,\infty)_\zeta)}\leq C.
  \end{aligned}
$$

Additionally, if $\alpha>3/5$,  there exists a constant $C_\alpha,$ depending only on $\alpha,\sigma $ and $b$, such that for all $\delta>0$,
$$\begin{aligned}
   \|\nabla_h \Ubl_{\delta,h}\|_{L^2(\omega_h\times [0,\infty)_\zeta)}\leq \frac{C_\alpha}\eps\\
 \| \nabla_h \Ubl_{\delta,3}\|_{L^2(\omega_h\times [0,\infty)_\zeta)}\leq \frac{C_\alpha}{\delta^{3/4}}.
  \end{aligned}
$$

Moreover, for all $\delta>0$,
$$\begin{aligned}
   \|(b-b_\delta )\Ubl_{\delta,h}\|_{L^2(\omega_h\times [0,\infty)_\zeta)}\leq C \frac{\delta^{11/4}}{\eps},\\
\|  \nabla_h \Pbl_\delta\|_{L^2(\omega_h\times [0,\infty)_\zeta)}\leq C_\alpha \frac{\eps^2}{\delta^{1/4}}.
  \end{aligned}
$$
\label{lem:approx_sol}
\end{Lem}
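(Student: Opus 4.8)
\textbf{Proof plan for Lemma \ref{lem:approx_sol}.}

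The plan is to reduce everything to pointwise bounds on $\lambda^\pm_\delta$ and its horizontal derivatives, and then integrate in $\zeta$ first (which is explicit since every term is a polynomial in $\zeta$ times $e^{-\lambda^\pm_\delta\zeta}$) and in $x_h$ afterwards, using the decay hypotheses \eqref{hyp:sigma_quadra} on $\sigma$ near $y=0$ and the coercivity $b'\sim 1$, $|b|\geq C$ for $|y|\geq 1$ from \eqref{hyp:Coriolis}. First I would record the basic estimates: since $b_\delta(y)\sim\delta^\alpha|y|^{1-\alpha}$ near $0$ and $b_\delta\sim b$ away from $0$, we have $|\lambda^\pm_\delta(y)|\sim|b_\delta(y)|^{1/2}$, hence $|\lambda^\pm_\delta|\geq c\,\delta^{\alpha/2}|y|^{(1-\alpha)/2}$; similarly $|\d_y\lambda^\pm_\delta|\sim|b_\delta'(y)|/|b_\delta(y)|^{1/2}$, which near $0$ is of order $\delta^{\alpha/2}|y|^{-(1+\alpha)/2}$ (the worst growth). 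The key elementary fact to invoke repeatedly is $\int_0^\infty \zeta^k e^{-2\Re(\lambda)\zeta}\,d\zeta = C_k (\Re\lambda)^{-(k+1)}$, and $\Re\lambda^\pm_\delta\sim|\lambda^\pm_\delta|$.

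For the $L^2$ bounds on $\Ubl_{\delta,h}$ and $\Ubl_{\delta,3}$: from \eqref{def:Ublh}, $|\Ubl_{\delta,h}|\lesssim \eps^{-1}|\sigma(x_h)|\,|\lambda^\pm_\delta|^{-1}e^{-\Re\lambda^\pm_\delta\zeta}$, so integrating in $\zeta$ gives $\|\Ubl_{\delta,h}(x_h)\|_{L^2_\zeta}^2\lesssim \eps^{-2}|\sigma(x_h)|^2|\lambda^\pm_\delta|^{-3}$; near $y=0$ this is $\lesssim \eps^{-2}y^4\cdot\delta^{-3\alpha/2}y^{-3(1-\alpha)/2}=\eps^{-2}\delta^{-3\alpha/2}y^{(5+3\alpha)/2}$, which is integrable in $y$ near $0$ for every $\alpha>0$; away from $y=0$ integrability follows from $|b|\geq C$ for $|y|\geq 1$ and $\sigma\in H^2$. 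The cleanest packaging is to note the truncation only \emph{increases} $|b_\delta|$ relative to $|b|$ near zero, so the bound is in fact uniform in $\delta$; this gives the first pair of estimates with a constant independent of $\alpha,\delta$. The bound on $\Ubl_{\delta,3}$ is identical in spirit, using the two groups of terms in \eqref{def:Ubl3}: the first is $\lesssim|\nabla_h\sigma \text{ or } D^2\sigma|\,|\lambda^\pm_\delta|^{-2}$ and the second $\lesssim|\sigma|\,|\d_y\lambda^\pm_\delta|\,|\lambda^\pm_\delta|^{-3}(1+\zeta|\lambda^\pm_\delta|)$; after $\zeta$-integration the $(1+\zeta|\lambda^\pm_\delta|)$ factor costs only a harmless extra $|\lambda^\pm_\delta|^{-1/2}$, and one checks using $|\d_x\sigma|,|\sigma|\lesssim y^2$, $|\d_y\sigma|\lesssim|y|$ that each contribution is $L^2$ in $y$ near zero, again uniformly in $\delta$.

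The harder estimates — and the real point of introducing $\delta$ — are the ones for $\nabla_h\Ubl_\delta$, since differentiating brings down a factor $\zeta\,\d_y\lambda^\pm_\delta$, i.e. an extra $|\lambda^\pm_\delta|^{-1}|\d_y\lambda^\pm_\delta|$ after integration in $\zeta$, which near $0$ is of order $\delta^{\alpha/2}y^{-(1+\alpha)/2}\cdot\delta^{-\alpha/2}y^{-(1-\alpha)/2}=y^{-1+\ldots}$ and creates the non-integrable $y^{-5/4}$-type singularity flagged before the statement. Here one splits $\omega_h$ into $\{|y|\leq\delta\}$ and $\{|y|\geq\delta\}$. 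On $\{|y|\geq\delta\}$ we have $b_\delta=b$ (for the relevant range) up to the cutoff transition, so the estimates are those of the untruncated object but the worst power of $y$ is cut off at $y\sim\delta$, producing the $\delta^{-3/4}$ in the bound for $\nabla_h\Ubl_{\delta,3}$ (integrating $y^{-5/2}$ down to $\delta$ gives $\delta^{-3/2}$, whose square root is $\delta^{-3/4}$); the condition $\alpha>3/5$ is exactly what makes the remaining integral over $\{|y|\leq\delta\}$, where $b_\delta\sim\delta^\alpha y^{1-\alpha}$, converge with a $\delta$-independent (hence also bounded) contribution — one tracks the exponent: the integrand there behaves like $\delta^{-c\alpha}y^{d+e\alpha}$ and $\alpha>3/5$ makes $d+e\alpha>-1$ while the $\delta$ powers combine to stay bounded. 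The $\nabla_h\Ubl_{\delta,h}$ bound is easier since $\Ubl_{\delta,h}$ carries one fewer power of $|\lambda^\pm_\delta|^{-1}$, so it stays $O(\eps^{-1})$ uniformly. Finally, the last two estimates are direct: $(b-b_\delta)$ is supported in $\{|y|\leq 2\delta\}$ and there $|b-b_\delta|\lesssim\delta^\alpha|y|^{1-\alpha}$ while $|\Ubl_{\delta,h}|\lesssim\eps^{-1}|\sigma||\lambda^\pm_\delta|^{-1}$, so using $|\sigma|\lesssim y^2$ and integrating over $\{|y|\leq 2\delta\}$ one collects enough powers of $\delta$ to reach $\delta^{11/4}/\eps$; and $\nabla_h\Pbl_\delta=-\eps^3\nabla_h\Div_h\Ubl_{\delta,h}$ is controlled by $\eps^3$ times the $\nabla_h$-type estimate for $\Ubl_{\delta,h}$, losing one more $y$-power near zero, which accounts for the $\eps^2\delta^{-1/4}$. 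I expect the bookkeeping of exponents in the region $\{|y|\leq\delta\}$ — matching the $\delta$-powers from the truncation against the $y$-powers from $\sigma$ and from $\d_y\lambda^\pm_\delta$ so that $\alpha>3/5$ comes out as the sharp threshold — to be the main obstacle; everything else is a routine application of $\int\zeta^k e^{-2\Re\lambda\zeta}=C_k(\Re\lambda)^{-k-1}$ together with \eqref{hyp:sigma_quadra} and \eqref{hyp:Coriolis}.
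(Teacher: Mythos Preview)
Your plan is essentially the paper's own proof: integrate explicitly in $\zeta$ via $\int_0^\infty\zeta^k e^{-2\Re\lambda\,\zeta}\,d\zeta\sim(\Re\lambda)^{-k-1}$, then split into the zones $|y|\leq\delta$, $\delta\leq|y|\leq 1$, $|y|\geq 1$ and track the powers of $y$ and $\delta$ coming from $\lambda^\pm_\delta$, $\d_y\lambda^\pm_\delta$ and \eqref{hyp:sigma_quadra}. One small correction: for $\nabla_h\Ubl_{\delta,3}$ the contribution of $\{|y|\leq\delta\}$ is \emph{not} $\delta$-independent as you suggest --- the $\delta$-powers do not cancel, and that region contributes at the same order $\delta^{-3/4}$ as $\{\delta\leq|y|\leq 1\}$; the threshold $\alpha>3/5$ is only what makes the $y$-integral near the origin finite, not bounded uniformly in $\delta$. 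The paper also flags a point you omit: the sign change of $b$ at $y=0$ produces a Dirac-type contribution in $\d_{yy}\lambda^\pm_\delta$, which is harmless precisely because $|\sigma|\,|b_\delta|^{-9/4}\to 0$ at $y=0$ for $\alpha>1/9$, hence certainly for $\alpha>3/5$.
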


\begin{Rmk}
The above estimates are given for the rescaled boundary layer profiles $\Ubl, \Pbl$, which are defined on $\omega_h\times [0,\infty)_\zeta$. Remember that the boundary layer part of the stationary solution is defined on $\omega_h\times [0,1]$ by
$$
\ubl(x_h,z)=\Ubl_h\left(x_h, \frac{1-z}{\eps}  \right).
$$
Hence
$$
\|\ubl_h \|_{L^2(\omega_h\times (0,1))} \leq  \eps^{1/2} \| \Ubl_h \|_{L^2(\omega_h\times [0,\infty))}.
$$
The same estimates hold for $\pbl,\ubl_3$.

\end{Rmk}

\begin{proof}

$\bullet$ \textbf{$L^2$ estimates:}
	According to \eqref{hyp:Coriolis}, and to the definition of $b_\delta$, we have
$$
\left| \lambda^\pm(x_h) \right|\neq 0\ \text{if }y\neq 0,\\
\left| \lambda^\pm(x_h) \right|\sim \sqrt{\beta |y|}\text{ as }y\to 0,
$$
and thus there exists a constant $C$ such that
$$
\left| \Re( \lambda^\pm(x_h) ) \right|^{-1},\ \left| \lambda^\pm(x_h) \right|^{-1}\leq C |y|^{-1/2}\quad\forall x, \ \forall y\in[-1,1].
$$
Similarly, for all $\delta>0$, we have, for $|y|\leq \delta$
$$
\left| \Re( \lambda^\pm_\delta(x_h) ) \right|^{-1},\ \left| \lambda^\pm_\delta(x_h) \right|^{-1}\leq C |y|^{-(1-\alpha)/2}\delta^{-\alpha/2}.
$$
If $|y|\geq \delta, $ then $\lambda^\pm_\delta(x_h)$ satisfies the same estimates as $\lambda^\pm(x_h).$
A careful computation leads to
\be\label{Ublh_L2z}
\int_0^\infty\left|\Ubl_{\delta,h}(x_h,\zeta)\right|^2\:d\zeta=\frac{1}{2\eps^2}|\sigma(x_h)|^2 \sum_\pm \frac{1}{|\lambda^\pm_\delta(y)|^2 \Re(\lambda^\pm_\delta(y))}.
\ee
 Hence we obtain
$$
 \left(\int_0^\infty\left|\Ubl_{\delta,h}(x_h,\zeta)\right|^2\:d\zeta \right)^{1/2}\leq \frac{C}{\eps}\left\{ 
\begin{array}{ll}
 |y|^\frac{5+3\alpha}{4}\delta^{-\frac{3\alpha}{4}}&\text{ if }|y|\leq \delta,\\
|y|^{5/4}&\text{ if } \delta\leq |y|\leq 1,\\
|\sigma|^2&\text{ else}.
\end{array}
\right. 
$$
Eventually,  we infer that
$$
\|\Ubl_{\delta,h} \|_{L^2(\omega_h\times [0,\infty)_\zeta)}\leq \frac{C_0}{\eps},
$$
where the constant $C_0$ depends only on $b$ and $\sigma$. Notice that the truncation does not play any role at this stage: the same arguments show that $\Ubl_h\in L^2(\omega_h\times [0,\infty)_\zeta).$

Similarly, we have
\be
 \label{Ubl3_L2z}\int_0^\infty\left|\Ubl_{\delta,3}(x_h,\zeta)\right|^2\:d\zeta \leq  C \sum_\pm \frac{|\nabla \sigma(x_h)|^2}{|\lambda^\pm_\delta(y)|^5} + \frac{|\sigma(x_h)|^2 |\nabla \lambda^\pm_\delta(y)|^2}{|\lambda^\pm_\delta(y)|^7} .
\ee
Using the definition of the decay rates $\lambda^\pm_\delta$ together with the definition of the function $\psi$, we obtain 
$$
\left|\nabla \lambda^\pm_\delta \right|= \frac{ |b_\delta'|}{2|b_\delta|^{1/2}}\leq C 
\left\{ 
\begin{array}{ll}
 |y|^{-\frac{\alpha+1 }{2}}\delta^{\alpha/2}&\text{ if }|y|\leq \delta,\\
|y|^{-1/2}&\text{ if } \delta\leq |y|\leq 1,\\
1&\text{ else}.
\end{array}
\right. 
$$
Thus
$$
\|\Ubl_{\delta,3} \|_{L^2(\omega_h\times [0,\infty)_\zeta)}\leq C_0.
$$

$\bullet$ \textbf{$H^1_h$ estimates:}

We begin with the bound on $\nabla_h \Ubl_{\delta,h}$; the calculations are very similar to the ones which led to the $L^2$ bound on $U_{\delta,3}, $ and are therefore left to the reader. In fact, the situation is even a little less singular than in the case of $U_{\delta,3}$ (we ``gain'' one integration with respect to the variable $\zeta$, and thus one factor $(\lambda^\pm_\delta)^{-1}$). The bounds on $\lambda_\delta^\pm$ and $\sigma$ entail that 
$$
\| \nabla_h \Ubl_{\delta,h}\|_{L^2(\omega_h\times [0,\infty)_\zeta)}\leq \frac{C_0}{\eps}.
$$

We now tackle the bound on $\nabla_h \Ubl_{\delta,3}$: first, differentiating equation \eqref{def:Ubl3} with respect to $x_h$, we obtain
\begin{eqnarray*}
\|\nabla_h \Ubl_{\delta,3}(x_h)\|_{L^2([0,\infty)_\zeta)}&\leq& C \left(\frac{|D^2\sigma(x_h)|}{|\lambda_\delta|^{5/2}(y)} + |\nabla\sigma(x_h)| \frac{|\d_y \lambda_\delta^\pm(y)}{|\lambda_\delta(y)|^{7/2}} \right)\\
&&+C\left( |\sigma(x_h)|	\frac{|\d_{yy}\lambda_\delta(y)|}{|\lambda_\delta(y)|^{7/2}} + |\sigma(x_h)|\frac{|\d_y\lambda_\delta(y)|^2}{|\lambda_\delta(y)|^{9/2}}\right).
\end{eqnarray*}
In the expression above, we have denoted by $|\lambda_\delta|$ the common size of $|\lambda^+_\delta|$ and $|\lambda^-_\delta|$. Notice that due to the sign change in $b$ at $y=0$, there is in general a Dirac mass at $y=0$ in the term $\d_{yy}\lambda_\delta$; more precisely, the part of $\nabla_h \Ubl_{\delta,3}$ which is not absolutely continuous with respect to the Lebesgue measure is of the type
$$
\delta_{y=0}|\sigma |\frac{|b_\delta'|}{|b_\delta|^{1/2}|\lambda_\delta|^{7/2}}=\delta_{y=0} |\sigma | |b_\delta|^{-9/4}.
$$
At this stage, the need for a truncation is clear: if $b_\delta$ is replaced by $b$, then $|\sigma| |b|^{-9/4}\sim |y|^{-1/4}  $ near $y=0$, and thus the singular part of $\nabla_h \Ubl_3$ is not well-defined in the sense of distributions. Conversely, if $\alpha>1/9$, then
$$|\sigma | |b_\delta|^{-9/4}\sim |y|^{\frac{9\alpha-1}{4}}\delta^{-\frac{9\alpha}{4}}\quad\text{as } y\to 0,$$
and thus the singular part of $\nabla_h \Ubl_{\delta,3}$ is zero.

Gathering all the terms, we deduce that
$$
\|\nabla_h \Ubl_{\delta,3}(x_h)\|_{L^2_\zeta}\leq C
\left\{ 
\begin{array}{ll}
 |y|^{-\frac{5(1-\alpha)}{4}}\delta^{-\frac{5\alpha}{4}}&\text{ if }|y|\leq \delta,\\
|y|^{-5/4}&\text{ if } \delta\leq |y|\leq 1,\\
|\sigma(x_h)| + |\nabla \sigma(x_h) | + |D^2\sigma(x_h)|&\text{ else}.
\end{array}
\right. 
$$
Thus $\nabla_h \Ubl_{\delta,3}\in L^2(\omega_h\times [0,\infty))$ if and only if $\alpha>3/5,$ and in this case there exists a constant $C_\alpha$, depending on $\sigma, b$ and $\alpha$, such that for all $\delta>0$
$$
\|\nabla_h \Ubl_{\delta,3}(x_h)\|_{L^2(\omega_h\times [0,\infty))}\leq \frac{C_\alpha}{\delta^{3/4}}.
$$

$\bullet$ \textbf{Error estimates:}
First, by definition of $b_\delta$, we have
$$
\left\| (b-b_\delta) \Ubl_{\delta, h} \right\|_{L^2}^2=\int_{\omega_h\cap\{|y|\leq 2\delta\}}\int_0^\infty |b(y)-b_\delta(y)|^2 \left| \Ubl_{\delta, h}(x,y,\zeta) \right|^2 d\zeta\:dy\:dx.
$$
Notice that for all $y\in\bR\setminus\{0\}$,
\begin{eqnarray*}
|b(y)-b_\delta(y)|&=&|b(y)|\left|1 - \psi\left(\frac{y}{\delta}\right)\right|\\
&=& \mathbf 1_{|y|\leq \delta}|b(y)|\left(\frac{\delta^\alpha}{|y|^\alpha} - 1\right) +  \mathbf 1_{\delta \leq |y|\leq 2\delta}|b(y)|\left|1 - \psi\left(\frac{y}{\delta}\right)\right|\\
&\leq & C \left(\mathbf 1_{|y|\leq \delta}|y|^{1-\alpha}(\delta^\alpha-|y|^\alpha) +  \mathbf 1_{\delta \leq |y|\leq 2\delta}|y|\right)\\
&\leq & C \mathbf 1_{|y|\leq 2\delta}|y|^{1-\alpha}\delta^\alpha.	
\end{eqnarray*}
Using \eqref{Ublh_L2z}, we infer
\begin{eqnarray*}
&&\left\| (b-b_\delta) \Ubl_{\delta, h} \right\|_{L^2(\omega_h\times [0,\infty)_\zeta)}^2\\&\leq & \frac{C}{\eps^2}\int_{x\in\bT}\int_{|y|\leq 2 \delta} 
|y|^{2(1-\alpha)} \delta^{2\alpha} |y|^4 |y|^{-3/2} \left|\frac{y}{\delta}\right|^{3\alpha/2}\:dy\\
&\leq & \frac{C}{\eps^2} \int_{|y|\leq 2 \delta} |y|^{(9-\alpha)/2} \delta^{\alpha/2}\:dy\\
&\leq & \frac{C\delta^{11/2}}{\eps^2} .
\end{eqnarray*}
There remains to evaluate $ \Pbl_\delta$. By definition,
$$
\Pbl_\delta= - \eps^3 \Div_h \Ubl_{h,\delta}.
$$
Using the same kinds of calculations as the ones which led to the bound on $\nabla_h \Ubl_{\delta,3}$,  we deduce that 
$$
\| \nabla_h \Pbl_\delta\|_{L^2(\omega_h\times [0,\infty)_\zeta)}\leq C \frac{\eps^2}{\delta^{1/4}}.
$$

\end{proof}

\subsection{Error estimates in the case $\nu_h\neq 0$ and conditions on the parameter $\delta$}$ $

If $\nu_h\neq 0$, we keep the construction of the previous paragraph, and we merely treat the viscous terms as error terms. The function $\ubl_{\delta,h}$ is an approximate solution of the horizontal part of equation \eqref{rescaled}, with the error term
$$
\frac{1}{\eps}(b-b_\delta) (\ubl_{\delta,h})^\bot - \nu_h \Delta_h \ubl_{\delta,h} + \nabla_h \pbl_\delta.
$$
According to the estimates of the previous paragraph (see Lemma \ref{lem:approx_sol}), we have
$$
\left\| \frac{1}{\eps}(b-b_\delta) (\ubl_{\delta,h})^\bot \right\|_{L^2(\omega)}\leq C \frac{\delta^{11/4}}{\eps^{3/2}}
$$
and
$$
\ba
\|\nu_h \Delta_h \ubl_{\delta,h}  \|_{L^2([0,1], H^{-1}(\omega_h))}\leq C\frac{\nu_h}{\sqrt{\eps}},\\
\|  \nabla_h \pbl\|_{L^2(\omega)}\leq C\frac{\eps^{5/2}}{\delta^{1/4}} .
\ea
$$
Recall that because of the boundary layer scaling, there is a factor $\eps^{1/2}$ between the $L^2$ norms of $\ubl_\delta$ and $\Ubl_\delta$.

Hence, in order that the conditions of Theorem \ref{thm:stat} are satisfied, the numbers $\eps,\nu_h,\delta$ should verify
\be\label{cond:delta1}
\eps^{10}\ll\delta\ll \eps^{6/11},\quad \nu_h \ll  \eps.
\ee

On the other hand, $\ubl_{\delta,3}$ is an approximate solution of the vertical component of equation \eqref{rescaled}, with an error term equal to 
$$-\nu_h \Delta_h \ubl_3.$$
The estimates of the previous paragraph entail that 
$$
\left\| -\nu_h \Delta_h \ubl_3\right\|_{L^2([0,1], H^{-1}(\omega_h))}\leq C \frac{\nu_h\sqrt{\eps}}{\delta^{3/4}}.
$$
In order that the condition of Theorem \ref{thm:stat} is satisfied, the parameter $\delta$ must be chosen so that 
\be\label{cond:delta2}
\delta\gg \nu_h^{2/3} \eps^2.
\ee
Notice that if $\nu_h=O(1)$ and $\eps=o(1)$, we always have 
$$
\nu_h^{2/3} \eps^2\ll \eps^{6/11}.
$$
Hence it is always possible to choose a parameter $\delta$ which matches the above conditions.

Further conditions on the parameter $\delta$ will be given in the next section.
When the conditions \eqref{cond:delta1}, \eqref{cond:delta2} are satisfied, the couple $(\ubl_\delta,\pbl_\delta)$
is an approximate solution of equation \eqref{rescaled} in the sense of Theorem \ref{thm:stat}. Furthermore, $\ubl_\delta$ satisfies the horizontal part of the boundary condition \eqref{top} at $z=1$; $\ubl_3$, on the other hand, does not satisfy the non penetration condition at $z=1$. Hence, we construct in the next section an interior term, which is also an approximate solution of \eqref{rescaled}, and which lifts the trace of $\ubl_3$ at $z=1$.

Notice that $\ubl$ also has a non-vanishing trace at $z=0$; however, this trace is exponentially small on the set where $b$ is bounded away from zero, and can thus be lifted thanks to an exponentially small corrector. This will be taken care of after the construction of the interior term $\uint$, in the last paragraph of the next section.

\section{The interior part of the stationary solution}

\label{sec:int}

In this section, we construct a stationary solution $\uint$ of equation \eqref{rescaled}, which is such that $\uint + \ubl$ satisfies the boundary conditions \eqref{bottom}, \eqref{top}. Going back to equation \eqref{rescaled}, it can be readily checked that the function $\uint$ should satisfy the system
\be
\begin{aligned}
b(y) (\uint_h)^\bot + \nabla_h p=0 ,\\
\d_z p=0,\\
\Div \uint=0,
\end{aligned}
\label{eq:uint}\ee
together with the boundary conditions
\be\label{CL:uint}
\begin{aligned}
\d_z \uint_{h|z=1}=0, \quad\uint_{3|z=1}=-\ubl_{3|z=1},\\ 
\d_z \uint_{h|z=0}=0, \quad\uint_{3|z=0}=0.
\end{aligned}
\ee
We recall that since the function $\ubl_3$ depends on the small parameter $\delta$, the function $\uint$ also depends on $\delta$ in general, and thus will be denoted by $\uint_\delta$ in the sequel. Hence we also investigate the asymptotic behaviour of $\uint_\delta$ as $\delta\to 0.$

It turns out that the solution of the sytem \eqref{eq:uint}-\eqref{CL:uint} is unique, up to a function of the type $(v(y),0,0).$ Hence we give in this paragraph a straightforward way of building the solution, and then we derive $L^2$ estimates on the function $\uint_\delta$. The main result of this section is the following:
\begin{Lemma}
Assume that assumptions \eqref{hyp:sigma_quadra}-\eqref{hyp:Coriolis} are fulfilled. Then there exists a solution $\uint_\delta\in L^2(\omega)$ of the system \eqref{eq:uint}. Moreover, there exists a positive constant $C$, depending only on $\sigma $ and $b$, such that
$$
\|\uint_\delta\|_{L^2(\omega)}\leq C\quad\forall \delta>0.
$$
\end{Lemma}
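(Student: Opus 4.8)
The strategy is to solve the system \eqref{eq:uint}--\eqref{CL:uint} explicitly, exploiting its triangular structure. First I would observe that the second equation $\d_z p = 0$ forces $p = p(x_h)$, and then the first equation $b(y)(\uint_h)^\bot + \nabla_h p = 0$ determines $\uint_h$ in terms of $p$: since $u^\bot = (-u_2, u_1)$, we get $\uint_h = -\frac{1}{b(y)}(\nabla_h p)^\bot = \frac{1}{b(y)}(\d_y p, -\d_x p)$. In particular $\uint_h$ is independent of $z$, so the boundary conditions $\d_z \uint_{h|z=0} = \d_z \uint_{h|z=1} = 0$ are automatically satisfied, and the only remaining freedom is a $z$-independent geostrophic horizontal field. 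The incompressibility constraint $\Div_h \uint_h + \d_z \uint_3 = 0$ then reads $\d_z \uint_3 = -\Div_h \uint_h$, whose right-hand side is $z$-independent; integrating in $z$ from $0$ with $\uint_{3|z=0} = 0$ gives $\uint_3(x_h, z) = -z\,\Div_h \uint_h(x_h)$. The remaining boundary condition $\uint_{3|z=1} = -\ubl_{3|z=1}$ becomes the elliptic equation
$$
\Div_h\!\left(\frac{1}{b(y)}(\nabla_h p)^\bot\right) = \ubl_{3|z=1}\quad\text{on }\omega_h,
$$
which, using $\Div_h( (\nabla_h p)^\bot ) = 0$, simplifies to a first-order transport-type equation $-\frac{b'(y)}{b(y)^2}\d_x p = \ubl_{3|z=1}$, i.e. $\d_x p = -\frac{b(y)^2}{b'(y)}\ubl_{3|z=1}$. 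So I would solve for $p$ by integrating in $x$ over $\bT$, which requires the zero-average compatibility condition: $\int_\bT \ubl_{3|z=1}\,dx = 0$, and this is exactly where hypothesis \eqref{compatibility} (that $\int_\bT \sigma_1\,dx = 0$) enters, via the explicit formula \eqref{Ekman_pumping} for $\ubl_{3|z=1}$.

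The second part is the $L^2$ estimate. Having the explicit formulas $\uint_h = \frac{1}{b}(\d_y p, -\d_x p)$ and $\uint_3 = -z\,\Div_h \uint_h$, I would bound $\|\uint_\delta\|_{L^2(\omega)}$ by $\|\uint_h\|_{L^2} + \|\Div_h \uint_h\|_{L^2}$ up to constants, reducing everything to weighted estimates on $p$ and its derivatives, i.e. on $\ubl_{3|z=1}$ with weights $b^2/b'$ and its $y$-derivative, together with $1/b$ factors. Here I would invoke the hypotheses \eqref{hyp:Coriolis}: the bounds $c^{-1} \leq b'(y) \leq c$ control the denominators $b'$, the growth $|b(y)| \geq C$ for $|y| \geq 1$ handles large $y$, and $b(y) \sim \beta y$ near $y = 0$ converts the $1/b$ singularities at the origin into $1/|y|$-type singularities, which are then absorbed by the decay hypotheses \eqref{hyp:sigma_quadra} on $\sigma$: the key point is that $\sigma, \d_x\sigma = O(y^2)$ and $\d_y\sigma = O(|y|)$ make the Ekman pumping $\ubl_{3|z=1}$ (which by \eqref{Ekman_pumping} involves $\sigma/b^2$, $\nabla\sigma/b$, etc.) vanish sufficiently fast at $y = 0$. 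I would want to track carefully that the resulting constant $C$ depends only on $\sigma$ and $b$ and, crucially, \emph{not} on $\delta$; this should follow because $b_\delta$ only differs from $b$ on $\{|y| \leq 2\delta\}$, where $|b_\delta| \geq |b|/2$ in the relevant sense, so all weighted integrals are bounded uniformly in $\delta$ (one must check the estimates using $b_\delta$ rather than $b$, but the lower bound $\psi \geq 1/2$ makes $1/b_\delta \leq 2/b$ away from zero and the truncation only \emph{improves} integrability near zero).

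Finally I would address uniqueness up to a field $(v(y), 0, 0)$: if $(\uint, p)$ and $(\tilde\uint, \tilde p)$ both solve \eqref{eq:uint}--\eqref{CL:uint}, the difference satisfies the homogeneous system, so $\d_z(p - \tilde p) = 0$, $b(y)(\uint_h - \tilde\uint_h)^\bot = -\nabla_h(p - \tilde p)$, and the homogeneous elliptic/transport equation $\d_x(p - \tilde p) = 0$ forces $p - \tilde p = q(y)$; then $\uint_h - \tilde\uint_h = \frac{1}{b}(q'(y), 0)$ and from $\d_z(\uint_3 - \tilde\uint_3) = -\Div_h(\uint_h - \tilde\uint_h) = -\d_x(\frac{q'(y)}{b}) = 0$ together with the vanishing traces we get $\uint_3 = \tilde\uint_3$, leaving precisely a residual horizontal field of the form $(v(y), 0, 0)$ with $v = q'/b$.

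\textbf{Main obstacle.} The principal difficulty is not the algebra of solving the triangular system but the uniform-in-$\delta$ weighted $L^2$ bound near the degeneracy $y = 0$: one must verify that the combination of the singular weights coming from $1/b$ (and $1/b'$, $b^2/b'$) in the explicit solution formula is exactly compensated by the prescribed vanishing rates of $\sigma$ in \eqref{hyp:sigma_quadra}, and that passing from $b$ to the truncated $b_\delta$ does not degrade — and in fact can only improve — these bounds, so that the constant $C$ is genuinely independent of $\delta$. Getting the bookkeeping of exponents right (as in the proof of Lemma \ref{lem:approx_sol}) is where the real work lies.
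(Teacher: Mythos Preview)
Your proposal is correct and follows essentially the same route as the paper: exploit the triangular structure to see that $\uint_h$ is $z$-independent and $\uint_3 = zw_\delta$, reduce to a first-order equation in $x$ whose solvability on $\bT$ is precisely the compatibility condition \eqref{compatibility}, and then check that the vanishing rates in \eqref{hyp:sigma_quadra} compensate the $1/b$-type singularities uniformly in $\delta$. The only cosmetic difference is that you parametrize through the pressure $p$ (solving $\d_x p = -\tfrac{b^2}{b'}\ubl_{3|z=1}$), whereas the paper bypasses $p$ and works directly with the velocity components via the Sverdrup relation $\uint_{\delta,2} = \tfrac{b}{b'}w_\delta$ and the explicit identity $w_\delta = \tfrac{\d_x\sigma_2}{b_\delta} - \d_y(\sigma_1/b_\delta)$, which makes the uniform bounds slightly more transparent; your approach leads to the same formulas after one more differentiation.
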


\subsection{Construction of $\uint_\delta$} $ $

To begin with, we differentiate the first equation of \eqref{eq:uint} with respect to $z$, and we obtain
$$
b(y) \d_z (\uint_{\delta,h})^\bot=0.
$$
Since $\uint_\delta$ is divergence-free, we infer that $\d_{zz}\uint_{\delta,3}=0$. Hence the third component $\uint_{\delta,3}$ is uniquely determined; in order to lighten the notation, set $$
w_\delta(x_h)=-\ubl_{\delta,3|z=1}(x_h).
$$
 We have
$$
\uint_{\delta,3}(x_h,z)=z w_\delta(x_h).
$$

Then, taking the two-dimensional curl  of the first equation in \eqref{eq:uint}, we derive
$$
\rot_h (b(\uint_{\delta,h})^\bot)=\Div_h (b\uint_{\delta,h})=0.
$$
Since the Coriolis factor only depends on the latitude $y$, we are led to
$$
b'(y) \uint_{\delta,2}=-b(y) \Div_h \uint_{\delta,h}=+ b(y)\d_z \uint_{\delta,3}=b(y) w_\delta(x_h). 
$$
Consequently, the second component is also uniquely determined. In the case when $b(y)=\beta y$, one has in particular
$$
\uint_{\delta,2}(x_h)=y w_\delta(x_h).
$$
This equation is known as the \textbf{Sverdrup relation} (see \cite{Pedlosky1,Pedlosky2}).

There remains to compute the first component of $\uint$; the divergence-free condition entails that
\begin{eqnarray*}
 \d_x \uint_{\delta,1}&=&-\d_y \uint_{\delta,2} -\d_z \uint_{\delta,3}=-\d_y \left(\frac{b}{b'}w_\delta  \right) - w_\delta \\
&=&-\left[ \d_y \left(\frac{b}{b'} \right) + 1\right] w_\delta -\frac{b}{b'}\d_y w_\delta\\
&=&-\left(2- \frac{bb''}{{b'}^2} \right)w_\delta - \frac{b}{b'}\d_y w_\delta.
\end{eqnarray*}
Notice that this equation has a solution in $\omega_h$ if and only if the right-hand side has zero average in $x$, for all $y$. This is satisfied in particular if 
\be
\int_{\bT} w_\delta(x,y)\:dx=0\quad\forall y.\label{compatibility2}
\ee
We assume that this assumption is satisfied for the time being, and we will prove that it is in fact equivalent to \eqref{compatibility}.
Integrating the equality giving $\d_x \uint_{\delta,1}$ with respect to $x$, we deduce that $\uint_{\delta,1}$ is defined up to a function of $y$ only, provided \eqref{compatibility2} is satisfied.

Now, let us compute $w_\delta$ in terms of $\sigma$ and $b$. Using equation \eqref{Ekman_pumping}, we infer that
$$
w_\delta(x_h)=\frac{1}{2}\sum_\pm \left( \Div_h \sigma \mp i \rot_h \sigma \right)\frac{1}{(\lambda^\pm_\delta)^2} - \sum_\pm (\sigma \pm i \sigma^\bot) \cdot \frac{\nabla_h \lambda^\pm_\delta}{(\lambda^\pm_\delta)^3}.
$$
By definition of $\lambda^\pm$ (see \eqref{def:lambda_pm}), we have
$$
\nabla \lambda^\pm_\delta= \left( 0, \frac{1\mp i \sgn(b)}{2\sqrt{ 2}} \frac{\sgn(b) b'_\delta}{|b_\delta|^{1/2}}\right).
$$
Hence
\be\label{def:w}
w_\delta(x_h)=\frac{1}{b_\delta}\rot_h \sigma +\frac{1}{b_\delta^2}\sigma^\bot \cdot \nabla b_\delta= \frac{\d_x \sigma_2}{b_\delta}- \d_y\left(\frac{\sigma_1}{b_\delta}\right).
\ee
(Recall that $b_\delta$ only depends on the latitude $y$.)

We now prove the equivalence of \eqref{compatibility} and \eqref{compatibility2}. It is clear that $\eqref{compatibility}\Rightarrow\eqref{compatibility2}. $ Conversely,  if \eqref{compatibility2} is satisfied, then 
\eqref{def:w} leads to the existence of a constant $\alpha_\delta\in\bR$ such that
$$
\int_{\bT} \frac{\sigma_1(x,y)}{b_\delta(y)}\:dx = \alpha_\delta \quad\forall y.
$$
Since $\sigma_1$ vanishes quadratically near $y=0$, we deduce that the left-hand side of the above equality vanishes at least linearly near $y=0$. Consequently, $\alpha_\delta=0$ for all $\delta,$ and thus \eqref{compatibility} is satisfied.

\subsection{Bounds on $\uint$}$ $

We begin with a bound on the function $w_\delta$ given by \eqref{def:w}.
We recall that
$$
b(y)\sim \beta y\quad\text{near }y=0,
$$
and 
$$\begin{aligned}
   \sigma_1(x,y)=\frac{1}{2}\d_y^2\sigma_1(x,0) y^2 + O(y^3)\quad\text{as } y\to 0,\\
\d_x \sigma_2 (x,y)=O(|y|^2)\quad\text{as } y\to 0.
  \end{aligned}
$$
Thus
$$
 \d_y\left(\frac{\sigma_1}{b_\delta}\right)=\frac{b_\delta\d_y \sigma_1-\sigma_1\d_y b_\delta}{b_\delta^2}= O(|y|^\alpha\delta^{-\alpha})\quad \text{for }y\to0, \ |y|\leq \delta.
$$
The exponent $\alpha$ was introduced in the previous section, see \eqref{truncation}.

Consequently, there exists a constant $C$ (independent of $\delta$) such that
$$
\| w_\delta\|_{L^2(\omega_h)}\leq C.
$$
This entails immediately that $\uint_{\delta,3}$ and $\uint_{\delta,2}$ are bounded in $L^2(\omega)$, uniformly in $\delta.$

As for $\uint_{\delta,1}$, we have, by definition
\begin{eqnarray*}
\d_x \uint_{\delta,1}&=& -\d_y\left(\frac{b \d_x \sigma_2}{b'b_\delta} \right) - \frac{\d_x\sigma_2}{b_\delta}+ \d_y\left(\frac{b}{b'} \d_y \frac{\sigma_1}{b_\delta}\right) +  \d_y \frac{\sigma_1}{b_\delta}\\
&=&-\d_y \left(\frac{\d_x\sigma_2}{\psi(\frac{\cdot}{\delta}) b'}\right)- \frac{\d_x\sigma_2}{b_\delta}+\d_y \frac{\d_y\sigma_1}{b'\psi(\frac{\cdot}{\delta})}- \d_y\left(\frac{\sigma_1}{b_\delta}\left(\frac{b b_\delta'}{b'b_\delta}-1\right)\right).
\end{eqnarray*}
Integrating with respect to the variable $x$, we deduce that
$$
\uint_{\delta,1}=- \d_y \left(\frac{\sigma_2}{\psi b'}\right)- \frac{\sigma_2}{b_\delta}+\d_y \frac{\d_yS_1}{b'\psi}- \d_y\left(\frac{S_1}{b_\delta}\left(\frac{b b_\delta'}{b'b_\delta}-1\right)\right),
$$
where $S_1(x,y)=\int_0^x \sigma_1(x',y)\:dx'$.
Using the definition of $b_\delta$, we obtain
$$
\frac{b b_\delta'}{b'b_\delta}-1=\frac{1}{\delta}\frac{b\psi'(\frac{\cdot}{\delta})}{b'\psi(\frac{\cdot}{\delta})}.
$$
It can be checked that the function in the right-hand side is $\mathcal C^\infty$ on $(0,\infty)$ and bounded, together with all its derivatives. Moreover, its support is included in $[0,2\delta].$ As a consequence, the term 
$$ \d_y\left(\frac{S_1}{b_\delta}\left(\frac{b b_\delta'}{b'b_\delta}-1\right)\right)$$
is $o(1)$ in $H^1(\omega)$ as $\delta\to 0$. The other terms can be evaluated in a similar fashion. Using the assumptions on $\sigma$ and $b$ together with the definition of $\psi$, we deduce that there exists a constant $C[\sigma]$ such that
\be
\|\uint_\delta\|_{L^2(\omega)}\leq C[\sigma].
\ee

\vskip2mm

$\bullet$ We now derive estimates in $L^2([0,1], H^1(\omega_h))$, which are needed to bound the error term $\nu_h\Delta_h \uint_\delta$. First, using the definition of $b_\delta$ together with assumptions \eqref{hyp:sigma_quadra}, \eqref{hyp:Coriolis}, it can be proved that
$$
\d_y w_\delta=O(y^{\alpha-1} \delta^{-\alpha})\quad\text{as }y\to 0, |y|\leq \delta.
$$
Hence $\d_yw_\delta\in L^2(\omega)$ (recall that $\alpha>3/5>1/2$) and 
$$
\| \d_yw_\delta\|_{L^2(\omega)}=O(\delta^{-1/2}).
$$
The term $\d_x w_\delta$, on the other hand, is bounded in $L^2(\omega)$, uniformly in $\delta.$
Consequently, there exists a constant $C$, depending only on $\sigma, b$ and $\alpha$, such that
$$
\| \nabla_h \uint_{\delta,3}\|_{L^2(\omega)}\leq \frac{C}{\delta^{1/2}}.
$$

Similarly, we prove that $\d_y u_{\delta,2}=O(|y|^\alpha \delta^{-\alpha})$ for $y$ in a neighbourhood of zero, and thus
there exists a constant $C$ such that
$$
\| \nabla_h \uint_{\delta,2}\|_{L^2(\omega)}\leq C.
$$

We now tackle the term $u_{\delta,1}$; using either the expression of $\d_x u_{\delta,1}$ in terms of $w_\delta$ or the final definition in terms of $\sigma_2$ and $S_1$, it can be checked that 
$$
\d_y u_{\delta,1}=O(y^{\alpha-1} \delta^{-\alpha})\quad\text{as }y\to 0, |y|\leq \delta.
$$
The largest terms are those coming from $S_1$ (or from $b\d_yw_\delta/b'$); for instance, the above calculations show that
$$
\frac{b}{b'}\d_y w_\delta=O(|y|^\alpha \delta^{-\alpha});
$$
since one power of $y$ is lost with each differentiation with respect to $y$, we obtain the desired bound on $u_{\delta,1}$. Eventually, we are led to 
$$
\| \nabla_h \uint_{\delta,1}\|_{L^2(\omega)}\leq \frac{C}{\delta^{1/2}}.
$$

\vskip1mm

$\bullet$ Notice that
$$
\uint_\delta\to \uint \quad \text{in }L^2(\omega)
$$
as $\delta\to 0$, where $\uint$ is the function defined by the same expressions as $\uint_\delta$, but replacing every occurrence of $w_\delta$ by 
$$
w=\frac{\rot_h\sigma}{b}+ \frac{\sigma^\bot \cdot \nabla b}{b^2}.
$$
By definition of $b_\delta$, $w$ and $w_\delta$ coincide on the set $\{|y|\geq 2\delta\}$. Moreover, $w$ is bounded in $L^2$ and $w, y \d_yw$ have finite limits  as $y\to0,$ while
$$
\int_{\bT}\int_{|y|\leq \delta} |w_\delta|^2 + |y|^2 |\d_y w_\delta|^2=o(1).
$$
 
Consequently, $w_\delta$ (resp. $b\d_yw_\delta$) converges towards $w$ (resp $b\d_y w$) in $L^2(\omega)$ as $\delta\to 0$. The convergence of $\uint_\delta$ follows. However, in general, $\uint$ does not belong to $H^1(\omega)$, except if the surface stress $\sigma$ vanishes at sufficiently high order.

\subsection{Proof of Theorem \ref{thm:stat}}$ $

Let us first evaluate the error terms in equation \eqref{NS}.
To begin with, notice that $\d_{zz}\uint_\delta=0$, so that there is no error term associated with the vertical Laplacian. Consequently, the only error terms in equation \eqref{rescaled} are those coming from the term $\nu_h \Delta_h \uint_\delta.$

According to the $H^1$ estimates of the previous paragraph, we have
$$
\ba
\left\| \nu_h \Delta_h \uint_{\delta,h}\right\|_{L^2([0,1], H^{-1}(\omega_h))}\leq C \frac{\nu_h}{\sqrt{\delta}},\\
\left\| \nu_h \Delta_h \uint_{\delta,3}\right\|_{L^2([0,1], H^{-1}(\omega_h))}\leq C \frac{\nu_h}{\sqrt{\delta}}.
\ea
$$
In order that the conditions of Theorem \ref{thm:stat} are satisfied, we have to choose the parameter $\delta$ so that $\delta\gg \nu_h.$ We recall that $\delta,\nu_h$ should also satisfy \eqref{cond:delta1}, \eqref{cond:delta2}. Thus the new conditions on $\delta $, $\nu_h$ are
\be\label{cond:delta3}
\max(\nu_h, \eps^{10}, \nu_h^{2/3}\eps^2) \ll \delta\ll \eps^{6/11},\quad  \nu_h\ll \eps.
\ee

\vskip1mm

$\bullet$ The proof of Theorem \ref{thm:stat} is now almost complete. There only remains to take care of the  boundary conditions: indeed, as we have explained at the end of the previous section, the trace of $\d_z \ubl_{\delta,h}$ and $\ubl_{\delta,3}$ is non zero at $z=0$. Hence, we define a corrector $\vint_\delta$, which is small in $H^1$, and which lifts the remaining boundary conditions. The result is the following:

\begin{Lem}
Assume that $\nu_h=o(\eps)$ and that there exists $\kappa\in(1,2)$ such that $\delta\gtrsim \eps^\kappa$. 

Then there exists a divergence free function $\vint_\delta$, such that $\vint_\delta=o(1)$ in $L^2(\omega)$, which satisfies the conditions
$$\ba
\d_z\vint_{\delta,h|z=1}=0, \quad\vint_{\delta,3|z=1}=0,\\
\d_z\vint_{\delta,h|z=0}=-\d_z\ubl_{h|z=0}, \quad\vint_{\delta,3|z=0}=-\ubl_{3|z=0}.
\ea
$$	
Furthermore, we can choose the parameter $\alpha$ of the truncation function $\psi$ so that
$$\ba 
\frac{1}{\eps}b e_3\wedge \vint_\delta , - \eps \d_{zz}\vint_h=o(1)\quad   \text{in }L^2(\omega),\\
-\eps\d_{zz}\vint_3=o(\eps^{-1})\quad \text{in }L^2(\omega),\\
\sqrt{\nu_h} \Delta_h \vint_h =o(1)\quad   \text{in }L^2([0,1]), H^{-1}(\omega_h)),\\
\sqrt{\nu_h}  \Delta_h \vint_3 =o(\eps^{-1})\quad   \text{in }L^2([0,1]), H^{-1}(\omega_h)).
\ea
$$

\label{lem:stop}
\end{Lem}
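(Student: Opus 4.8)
The plan is to lift the two (small) boundary traces
$$
g_h:=-\d_z\ubl_{\delta,h|z=0},\qquad g_3:=-\ubl_{\delta,3|z=0}
$$
by an explicit divergence free field built from fixed profiles in the vertical variable, and then to notice that $g_h$ and $g_3$ are themselves so small in $\eps$ that every term they produce in \eqref{rescaled} is negligible. The whole difficulty is concentrated in estimating these traces. By \eqref{def:Ublh}--\eqref{def:Ubl3} and the boundary layer change of variables $\ubl(x_h,z)=\Ubl(x_h,(1-z)/\eps)$, the traces of $\ubl_\delta$ and $\d_z\ubl_\delta$ at $z=0$ are obtained by evaluating the Ekman profiles at $\zeta=1/\eps$, and hence carry a factor $e^{-\Re\lambda^\pm_\delta(y)/\eps}$; schematically $|g_h|\lesssim \eps^{-2}|\sigma|\,e^{-\Re\lambda^\pm_\delta(y)/\eps}$, while $|g_3|$ is controlled by the same combination of $\sigma,\nabla\sigma,\nabla\lambda^\pm_\delta$ as in \eqref{def:Ubl3}, times an extra $\eps^{-1}\lambda^\pm_\delta$.

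First I would estimate $g_h$ and $g_3$ in $H^2$, resp. $H^1$, of $\omega_h$. Let $y_*>0$ be defined by $|b_\delta(y_*)|=\eps^2$; by \eqref{truncation} and \eqref{hyp:Coriolis} one has $y_*\simeq\eps^{2/(1-\alpha)}\delta^{-\alpha/(1-\alpha)}$, and because $\delta\gtrsim\eps^\kappa$ with $\kappa<2$ this is a positive power of $\eps$ which satisfies $y_*<\delta$ and whose exponent tends to $+\infty$ as $\alpha\to1^-$. I would split $\omega_h$ into three zones. On $\{|y|\ge y_0\}$, $y_0>0$ fixed, $\Re\lambda^\pm_\delta(y)$ is bounded away from $0$, so $g_h,g_3$ and their horizontal derivatives are $O(e^{-c/\eps})$ in $L^2$. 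On $\{|y|\le y_*\}$ I bound the exponential by $1$ and use the vanishing \eqref{hyp:sigma_quadra} of $\sigma$, together with $|b_\delta(y)|\gtrsim\delta^\alpha|y|^{1-\alpha}$ and the ensuing bounds $|\nabla\lambda^\pm_\delta|\lesssim\delta^{\alpha/2}|y|^{-(1+\alpha)/2}$ and so on: every resulting contribution to $g_h,\nabla_h g_h,\nabla_h^2 g_h,g_3,\nabla_h g_3$ is a monomial $\eps^{-a}\delta^{b}|y|^{c}$ with $c>-1$, and integration over $\{|y|\le y_*\}$, followed by the substitution $\delta\simeq\eps^\kappa$, $y_*\simeq\eps^{(2-\kappa\alpha)/(1-\alpha)}$, turns it into $\eps^{m(\alpha,\kappa)}$ with $m(\alpha,\kappa)\to+\infty$ as $\alpha\to1^-$. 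On the transition zone $\{y_*\le|y|\le y_0\}$ one uses the exponential damping, which concentrates the relevant integrals in a neighbourhood of $y_*$ of width $\sim y_*$, so the contribution is again governed by the monomial estimate on $\{|y|\le y_*\}$. The upshot: given any exponent $M$, one may pick $\alpha\in(3/5,1)$ — compatible with the constraint $\alpha>3/5$ already imposed — close enough to $1$ that $\|g_h\|_{H^2(\omega_h)}+\|g_3\|_{H^1(\omega_h)}\le C\eps^{M}$. (The Dirac masses at $y=0$ in the second derivatives of $\lambda^\pm_\delta$ are killed by the quadratic vanishing of $\sigma$ once $\alpha$ is close to $1$, exactly as in the proof of Lemma \ref{lem:approx_sol}.)

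Next I would construct the corrector. Fix $\phi_1,\phi_2\in C^\infty([0,1])$ with $\phi_1(1)=\phi_1'(1)=\phi_2(1)=\phi_2'(1)=\phi_2'(0)=0$, $\phi_1'(0)=1$, $\int_0^1\phi_1=0$ and $\int_0^1\phi_2=1$, and choose a vector field $G$ on $\omega_h$ solving $\Div_h G=g_3$ with $\|G\|_{H^2(\omega_h)}\le C\|g_3\|_{H^1(\omega_h)}$ (possible on $\bT\times\bR$ since $g_3$ decays fast in $y$, e.g. by separating off its $x$-average and integrating in $x$ and in $y$). Set
$$
\vint_{\delta,h}:=g_h\,\phi_1(z)+G\,\phi_2(z),\qquad \vint_{\delta,3}:=-(\Div_h g_h)\int_1^z\phi_1-g_3\int_1^z\phi_2 .
$$
Then $\Div\vint_\delta=(\Div_h g_h)\phi_1+g_3\phi_2+\d_z\vint_{\delta,3}=0$, and the normalisations of $\phi_1,\phi_2$ give precisely $\d_z\vint_{\delta,h|z=1}=0$, $\vint_{\delta,3|z=1}=0$, $\d_z\vint_{\delta,h|z=0}=g_h=-\d_z\ubl_{\delta,h|z=0}$ and $\vint_{\delta,3|z=0}=g_3=-\ubl_{\delta,3|z=0}$, as required. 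Since $\phi_1,\phi_2$, their derivatives and their primitives are bounded, the first step yields $\|\vint_\delta\|_{H^1(\omega)}+\|\d_{zz}\vint_\delta\|_{L^2(\omega)}+\|b\,\vint_{\delta,h}\|_{L^2(\omega)}\le C\eps^{M}$ (for the weighted norm one uses that $g_h,g_3$ are concentrated on $\{|y|\lesssim y_*\}$, where $b$ is bounded, and are $O(e^{-c/\eps})$ where $b$ is large). Taking $M\ge3$ and recalling $\nu_h=o(\eps)$, all five bounds of the Lemma follow at once: e.g. $\eps^{-1}\|b\,\vint_{\delta,h}\|_{L^2}\le C\eps^{M-1}=o(1)$, $\eps\|\d_{zz}\vint_{\delta,h}\|_{L^2}=o(1)$, $\eps\|\d_{zz}\vint_{\delta,3}\|_{L^2}\le C\eps^{M}=o(\eps^{-1})$, and $\sqrt{\nu_h}\,\|\Delta_h\vint_\delta\|_{L^2([0,1],H^{-1}(\omega_h))}\le C\sqrt{\nu_h}\,(\|g_h\|_{H^2(\omega_h)}+\|g_3\|_{H^1(\omega_h)})=o(1)$, and similarly with $\eps^{-1}$ on the right for the third component.

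The step I expect to be the real obstacle is the first one: the trace of an Ekman profile at the \emph{far} boundary is exponentially small only where the decay rate $\Re\lambda^\pm_\delta$ is bounded away from zero, and it degenerates near $y=0$; one must offset this degeneracy by the prescribed quadratic vanishing of $\sigma$ and by a careful choice of the truncation exponent $\alpha$ — which is feasible precisely because the hypothesis $\delta\gtrsim\eps^\kappa$ with $\kappa<2$ forces $y_*$ to $0$ faster than any power of $\eps$ as $\alpha\to1$. Everything else (the construction of $\vint_\delta$ and the propagation of the bounds) is routine bookkeeping.
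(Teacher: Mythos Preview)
Your proposal is correct and follows essentially the same strategy as the paper: write down the explicit traces $g_h=\phi_h$, $g_3=\phi_3$, split $\omega_h$ into zones according to the size of $\Re\lambda^\pm_\delta$, and lift the traces by a divergence-free field built from fixed $z$-profiles together with a 2D divergence/Laplace problem. The paper uses $\vint_h=\tfrac{(1-z)^2}{2}\phi_h+\nabla_h\chi$ with $\Delta_h\chi=\phi_3-\tfrac16\Div_h\phi_h$, whereas you use two cut-off profiles and a right-inverse $G$ of $\Div_h$; these are equivalent constructions.

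The one genuine difference is in the zone decomposition. The paper cuts at $|y|=\delta$, bounds the exponential by $1$ on $|y|\le\delta$, and obtains explicit but rather tight polynomial estimates (the worst being $\|\nabla_h\phi_3\|_{L^2}\lesssim\delta^{-1/2}+\eps^{2(2\alpha-1)/(1-\alpha)}\delta^{-\alpha/(1-\alpha)}$), which then force the choice of $\alpha$ through $\delta\gg\eps^{(5\alpha-1)/(2\alpha)}$. Your cut at the finer scale $y_*$ (where $\Re\lambda^\pm_\delta\sim\eps$) is sharper: since $y_*\sim\eps^{(2-\kappa\alpha)/(1-\alpha)}$ with exponent $\to+\infty$ as $\alpha\to1^-$, every integral over $\{|y|\le y_*\}$ is $O(\eps^M)$ for $\alpha$ close enough to $1$, and the transition zone contributes the same up to an $\alpha$-dependent constant. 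This buys a cleaner end-game (``take $M\ge3$''), at the price of larger implicit constants $C_\alpha$ --- harmless since $\alpha$ is fixed once and for all. One point you gloss over: the weighted estimate $\|b\,\vint_{\delta,h}\|_{L^2}$ involves $bG$, and $G$ need not be localised near $y=0$ even if $g_3$ is. The paper handles this via the identity $\Delta_h(b\chi)=b\Delta_h\chi+2b'\d_y\chi+b''\chi$ and elliptic regularity; alternatively you may simply take $G=\varphi$ with $\varphi=-\tfrac12\sum_\pm(\sigma\pm i\sigma^\bot)(\lambda^\pm)^{-2}e^{-\lambda^\pm/\eps}$, which satisfies $g_3=\Div_h\varphi$ and is itself exponentially localised.
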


Before proving the lemma, let us complete the proof of Theorem \ref{thm:stat}: we choose a parameter $\delta$ which matches the conditions of Lemma \ref{lem:stop} together with \eqref{cond:delta3}. Notice that the choice $\delta=\eps $ works.
We set 
$$
\ustat=\ubl_\delta + \uint_\delta+ \vint_\delta;
$$
by construction, $\ustat$ satisfies the boundary conditions \eqref{bottom}, \eqref{top}, and it is an approximate solution of equation \eqref{rescaled}. The bounds on $\uint_\delta$ and $\ubl_\delta$ were proved in the previous paragraphs. Hence Theorem \ref{thm:stat} is proved.

\begin{proof}[Proof of Lemma \ref{lem:stop}]
Throughout the proof, we drop all indices $\delta$ in order not to burden the notation.

The construction of the corrector $\vint$ follows the one given in Lemma 1 in Appendix B of \cite{DSR}: setting
$$
\phi_h:=-\d_z\ubl_{h|z=0},\quad\phi_3:=-\ubl_{3|z=0},
$$	
we define
$$
\vint_h=\frac{(1-z)^2}{2}\phi_h + \nabla_h \chi,
$$
where the potential $\chi\in H^2(\omega_h)$ is defined by
\begin{eqnarray*}
\Delta_h \chi&=&\int_0^1 \Div_h\vint_h - \frac{1}{6}\Div_h \phi_h\\	
&=&- [\vint_3]^{z=1}_{z=0}- \frac{1}{6}\Div_h \phi_h= \phi_3 - \frac{1}{6}\Div_h \phi_h.
\end{eqnarray*}
We will check later on that the function $\phi_3$ has zero mean value on $\omega_h$, so that $\chi$ is well-defined. The third component of $\vint$ is then determined by
$$
\vint_3(x_h,z)=-\int_z^1\d_z \vint_3(x_h,z')dz'= \int_z^1 \Div_h \vint_h(x_h,z')\:dz'.
$$
By construction, $\vint$ is divergence free and satisfies the correct boundary conditions. There remains to evaluate $\vint$ in $L^2(\omega)$ and $L^2([0,1],H^1(\omega_h)).$

The boundary conditions $\phi_h, \phi_3$ are given by
\begin{eqnarray*}
\phi_h&=& -\frac{1}{2\eps}\sum_\pm(\sigma \pm i \sigma^\bot)\exp\left(-\frac{\lambda^\pm}{\eps}\right),\\
\phi_3&=& -\frac{1}{2}\sum_\pm (\Div_h \sigma \mp i \rot_h \sigma)\frac{1}{(\lambda^\pm)^2}\exp\left(-\frac{\lambda^\pm}{\eps}\right)\\	
&&+ \frac{1}{2}\sum_\pm (\sigma\pm i \sigma^\bot)\cdot \frac{\nabla_h \lambda^\pm}{(\lambda^\pm)^3}\left(2 + \frac{\lambda^\pm}{\eps}\right)\exp\left(-\frac{\lambda^\pm}{\eps}\right).
\end{eqnarray*}
Recall that in the expressions above, the functions $\lambda^\pm$ are in fact $\lambda^\pm_\delta$.
Notice that
$$
\phi_3=\Div_h \varphi,
$$
where
$$
\varphi=-\frac{1}{2}\sum_\pm(\sigma \pm i \sigma^\bot)\frac{1}{(\lambda^\pm)^2}\exp\left(-\frac{\lambda^\pm}{\eps}\right);
$$
this proves that $\phi_3$ has zero mean value on $\omega_h$, and will be used several times in the proof.

We now derive three type of estimates: first, estimates of $\Div_h\phi_h$ and $\phi_3$ in $L^2(\omega_h)$ will yield $H^2(\omega_h)$-bounds on $\chi$, and thus bounds in $L^2([0,1],H^1(\omega_h))$ for the function $\vint_h$, and in $L^2(\omega)$ for the function $\vint_3$. Then, estimates of $\phi_h$ and $\varphi$ will provide $L^2(\omega)$-bounds on $\vint_h$. Eventually, $L^2$ estimates of $\nabla_h\phi_3,D^2_h\phi_h$ will allow us to derive bounds on $\vint_3$ in $L^2([0,1], H^1(\omega_h))$.

\vskip1mm

$\bullet$\textbf{ Estimates of $\Div_h \phi_h$ and $\phi_3$ in $L^2(\omega_h)$:}

The main difficulty lies in the fact that $\lambda_\delta^\pm$ does not have the same behaviour for $|y|\leq \delta$ and $|y|\geq \delta$. We merely explain how the term $\Div_h \phi_h$ is evaluated; the treatment of the term $\phi_3$ is left to the reader.

If $|y|\geq 1$, we have 
$$
(\lambda^\pm)^2= \mp i b(y),\quad\text{with } |b(y)|\geq C \text{ (see \eqref{hyp:Coriolis})}.
$$
Thus 
$$
\left|\exp\left(-\frac{\lambda^\pm}{\eps}\right)\right| \leq \exp\left(-\frac{C}{\eps}\right),
$$
and 
$$
\int_{|y|\geq 1}\int_{x\in \bT}|\Div_h \phi_h(x,y)|^2\:dx\:dy\leq \frac{C}{\eps^4}\|\sigma\|_{H^1(\omega_h)}^2\exp\left(-\frac{2C}{\eps}\right).
$$
On the set where $\delta\leq |y|\leq 1$, the assumptions on the truncation function $\psi$ entail that there exists a constant $c$ such that
$$
\ba
c^{-1}|y|^{1/2}\leq\Re(\lambda^\pm_\delta(y)), |\lambda^\pm_\delta(y)| \leq c |y|^{1/2},\\
|\d_y \lambda^\pm_\delta(y)|\leq c |y|^{-1/2}.
\ea
$$
As a consequence,
\begin{eqnarray*}
&&\int_{\delta\leq |y|\leq 1}\int_{x\in\bT}|\Div_h \phi_h(x,y)|^2\:dx\:dy	\\
&\leq & \frac{C}{\eps^2}\int_\delta^1 |y|^2 \exp\left(-2c\frac{\sqrt{y}}{\eps}\right)\:dy+ \frac{C}{\eps^4}\int_\delta^1 |y|^4 \frac{1}{|y|}\exp\left(-2c\frac{\sqrt{y}}{\eps}\right)\:dy\\
&\leq &C\eps^4.
\end{eqnarray*}
There remains to treat the set where $|y|\leq \delta$; because of the truncation function $\psi$, this part is the most complicated. The definition of the function $\psi$ and the fact that $b(y)\sim \beta y$ for $y$ close to zero entail that
$$
\ba
c^{-1}|y|^{\frac{1-\alpha}{2}}\delta^\frac{\alpha}{2}\leq |\lambda^\pm_\delta(y)|, \Re(\lambda^\pm_\delta(y))\leq c |y|^{\frac{1-\alpha}{2}}\delta^\frac{\alpha}{2},\\
|\d_y \lambda^\pm_\delta(y)|\leq c |y|^{-\frac{1+\alpha}{2}}\delta^\frac{\alpha}{2}.
\ea
$$
Thus, for instance
\begin{eqnarray*}
&&\int_{|y|\leq \delta}\int_{x\in \bT} \left|\Div_h \sigma \exp\left(-\frac{\lambda^\pm}{\eps}\right)\right|^2\\
&\leq & C \int_0^\delta |y|^2\exp\left( - c \frac{|y|^{\frac{1-\alpha}{2}}\delta^\frac{\alpha}{2}}{\eps}\right)\:dy\\
&\leq & C\left(\frac{\eps^\frac{2}{1-\alpha}}{\delta^\frac{\alpha}{1-\alpha}}\right)^3.
\end{eqnarray*}
The other terms in $\Div_h \phi_h$ are evaluated in the same way. Gathering all the terms, we infer that
$$
\| \Div_h \phi_h\|_{L^2(\omega_h)}\leq C\left(\eps^{-2}\exp(-C/\eps) + \eps^2 + \eps^{-1} \left(\frac{\eps^\frac{1}{1-\alpha}}{\delta^\frac{\alpha}{2(1-\alpha)}}\right)^3\right).
$$
The corresponding term in $\sqrt{\nu_h}\Delta_h \vint$ should be $o(1)$ in $L^2([0,1], H^{-1}(\omega_h))$; hence the parameters $\eps,\nu_h,\delta$ must satisfy
$$
\ba
\sqrt{\nu_h}\left(\eps^{-2}\exp(-C/\eps) + \eps^2 \right)=o(1),\\
\sqrt{\nu_h }\eps^{-1} \left(\frac{\eps^\frac{1}{1-\alpha}}{\delta^\frac{\alpha}{2(1-\alpha)}}\right)^3=o(1).
\ea
$$
It is obvious that for $\nu_h,\eps\ll 1$, the first condition is always satisfied. The second condition reads
$$
\delta\gg\nu_h^\frac{1-\alpha}{3\alpha}\eps^\frac{4+2\alpha}{3\alpha}.
$$
Since $\nu_h\ll\eps $ (see \eqref{cond:delta3}), we always have 
$$
\nu_h^\frac{1-\alpha}{3\alpha}\eps^\frac{4+2\alpha}{3\alpha}\ll \eps^\frac{5+\alpha}{3\alpha},
$$
and  $2<(5+\alpha)/3\alpha $ since $\alpha<1$. Consequently, provided that $\nu_h\ll\eps,$ we have 
$$
\nu_h^\frac{1-\alpha}{3\alpha}\eps^\frac{4+2\alpha}{3\alpha}\ll\eps^2.
$$
Hence, if $\delta\gtrsim\eps^2$, $\sqrt{\nu_h}\Div_h\phi_h=o(1)$ in $L^2(\omega_h).$ It can be checked that $\phi_3$ satisfies the same property. The same estimates also prove that
$$\ba
\eps \d_{zz}\vint_3=o(\eps^{-1})\quad\text{in }L^2(\omega),\\
\vint_3=o(1)\quad\text{in }L^2(\omega).
\ea$$
Similarly, we show that $\phi_h, \varphi=o(\eps)$ in $L^2(\omega_h)$ as long as $\delta \gtrsim\eps^2$, and thus  $\vint_h=o( \eps )$ in $L^2.$ Notice that this is not entirely sufficient to prove the assertion of the Lemma if the Coriolis factor $b$ is unbounded. However, using the fact that $\phi_h$ and $\varphi$ decay like $\exp(-|b|^{1/2}/\eps)$ for $|y|\geq 1$, it can be easily proved that 
$$
\Delta_h(b\chi) = b\Delta_h\chi + 2 b'\d_2\chi + b''\chi=o(\eps)\quad\text{in }H^{-1}(\omega_h).
$$
Hence $b\chi=o(\eps)$ in $H^1(\omega_h),$ and $b\nabla\chi=\nabla(b\chi)-b'\chi=o(\eps)$ in $L^2$. Eventually, we infer that $b\vint_h=o(\eps)$ in $L^2(\omega)$.

\vskip1mm

$\bullet$\textbf{ Estimates of $D^2\phi_h$ and $\nabla_h\phi_3$ in $L^2(\omega_h)$:}

Calculations similar to the ones led above show that
$$
\| D^2\phi_h\|_{L^2(\omega_h)}\leq  C\|\sigma	\|_{H^2(\omega_h)}\left(\frac{\exp(-C/\eps)}{\eps^3} + 1 +\left(\frac{\eps}{\sqrt{\delta}} \right)^\frac{\alpha}{1-\alpha}\right).
$$
And if $\delta\gtrsim\eps^2$, $\nu_h\ll \eps,$ then the right hand side is $o(\eps^{-1} \nu_h^{-1/2}).$

The term $\nabla_h\phi_3$ is the most singular of all, and eventually prevents us from taking $\delta\gtrsim \eps^2;$ indeed, it can be proved that 
$$
\| \nabla_h \phi_3\|_{L^2(\omega_h)}\leq  C\|\sigma\|_{H^2(\omega_h)}\left(\frac{\exp(-C/\eps)}{\eps^2} + 1+ \delta^{-1/2} +\frac{\eps^\frac{2(2\alpha-1)}{1-\alpha}}{\delta^\frac{\alpha}{1-\alpha}} \right).
$$
In order that the right hand side is $o((\eps\sqrt{\nu_h})^{-1}),$ we must have 
$$
\delta\gg \eps^\frac{5\alpha-1}{2\alpha}.
$$
Since $\delta\gtrsim \eps^\kappa$ for some $\kappa\in(1,2)$, we choose $\alpha\in(0,1)$ such that $\frac{5\alpha-1}{2\alpha}>\kappa$. We then infer that $\nabla_h \phi_3=o((\eps\sqrt{\nu_h})^{-1})$ in $L^2(\omega_h)$, and thus $\sqrt{\nu_h}\Delta_h \vint_3=o(\eps^{-1})$ in $L^2([0,1], H^{-1}(\omega_h)).$

\end{proof}

\section{Two-dimensional propagation}
\label{sec:2D}

We recall that throughout this section and the following, we assume that $b(x_h)=\beta y$, and that $\omega_h=\bT\times \bR.$ The object of this section is to prove  the ``two-dimensional part'' of Theorem \ref{thm:propagation}. In particular, we prove that a two-dimensional perturbation of the solution $\ustat$ creates waves, propagating at a speed of order $\eps^{-1}$, with frequencies given by 
$$\beta \frac{k}{|k|^2 + |\xi_y|^2},$$
where $(k,\xi_y)$ is the wavelength.

A consequence of our result is that if $\ustat$ is initially perturbed by a two-dimensional function $u^0$ such that $u^0=O(1)$ in $L^2$ and such that the $x$-average of $u^0 $ is zero (i.e. $u^0$ has no Fourier mode corresponding to $k=0$), then the solution of \eqref{rescaled} with initial data $\ustat+ u^0$ becomes close to $\ustat $ for finite times, with an error term which is $o(1)$ in $L^2([T_0,T]\times\omega)$ for all $T>T_0>0$.

\begin{Def}
Denote by $\Ph: L^2(\omega_h)^2\to L^2(\omega_h)^2$  the projection on two-dimensional divergence free vector fields.  The Rossby propagation operator, denoted by $L_R$, is defined by
$$
L_R V =\Ph( b V^\bot).
$$

\end{Def}

\begin{Lem}
Let $\bar v^0_{h}\in L^2(\omega_h)$ be a two-dimensional divergence free vector field, and let $u\in \mathcal C(\bR_+, L^2(\om))$ be the solution of equation \eqref{rescaled} with initial data 
$$
u_{|t=0}=\begin{pmatrix}
 \bar v^0_{h}\\0
         \end{pmatrix},
$$ 
supplemented with the boundary conditions
$$\ba
\d_z u_{h|z=1}=0,\quad u_{3|z=1}=0,\\
\d_z u_{h|z=0}=0,\quad u_{3|z=0}=0.
\ea$$
Then $v=(v_h,0)$, where $u_h$ is a two-dimensional divergence free vector field given by
$$v_h(t)=\frac{1}{{2\pi}} \sum_{k\in\bZ}\int_{\bR}\exp\left(\frac{i\beta t}{\eps} \frac{ k}{|k_h|^2}- \nu_h |k_h|^2 t + ix_h\cdot k_h\right)\hat v^0_{h}(k,\xi_y) \:d\xi_y$$
where $k_h=(k,\xi_y)$ and 
$$
\hat v^0_{h}(k,\xi_y) =\frac{1}{2\pi}\int_{\omega_h} \exp(-i(xk+ y\xi_y))\bar v^0_{h}(x,y)\:dx\:dy,\quad \forall (k,\xi_y)\in\bZ\times \bR.
$$
\label{lem:prop2D}

\end{Lem}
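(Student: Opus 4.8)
\emph{Plan of proof.} The strategy is to construct the solution explicitly by a reduction to a two--dimensional problem, and then to invoke uniqueness. I would first look for $u$ of the form $u=(v_h(t,x_h),0)$ with $v_h$ independent of $z$ and two--dimensional divergence free. Such a field automatically satisfies all four boundary conditions, since $\d_z v_h\equiv 0$ and the vertical component vanishes identically. Inserting this Ansatz into \eqref{rescaled}, the third scalar equation reduces to $\eta^{-2}\d_z p=0$, so that $p=p(t,x_h)$; the vertical viscous term disappears because $u$ does not depend on $z$; and the horizontal part becomes
$$\d_t v_h+\frac{\beta y}{\eps}\,v_h^\bot+\nabla_h p-\nu_h\Delta_h v_h=0.$$
Applying the Leray projection $\Ph$ onto two--dimensional divergence free fields removes the pressure gradient and yields the closed equation $\d_t v_h+\tfrac1\eps L_R v_h-\nu_h\Delta_h v_h=0$, with $v_{h|t=0}=\bar v^0_h$. (Note that $\eta$ and $\nu_z$ have dropped out, which is consistent with their absence from the announced formula.)

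Next I would solve this reduced equation via the vorticity $\zeta=\rot_h v_h$. Since $v_h$ is divergence free, $\rot_h(\beta y\,v_h^\bot)=\Div_h(\beta y\,v_h)=\beta v_2$ and $\rot_h\nabla_h p=0$, so that $\zeta$ solves $\d_t\zeta+\tfrac{\beta}{\eps}\,\d_x\Delta_h^{-1}\zeta-\nu_h\Delta_h\zeta=0$, the vorticity equation of Theorem \ref{thm:propagation}. Taking the Fourier transform in $x_h$, each mode $\widehat\zeta(t,k_h)$, $k_h=(k,\xi_y)$, solves the scalar linear ODE $\d_t\widehat\zeta=\big(i\beta k/(\eps|k_h|^2)-\nu_h|k_h|^2\big)\widehat\zeta$, hence $\widehat\zeta(t,k_h)=\exp\!\big(i\beta t k/(\eps|k_h|^2)-\nu_h|k_h|^2 t\big)\widehat\zeta(0,k_h)$. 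Because the Biot--Savart relation $\widehat{v_h}(k_h)=-i\,k_h^\bot|k_h|^{-2}\,\widehat\zeta(k_h)$ is itself a Fourier multiplier, it commutes with the scalar propagator, and therefore $\widehat{v_h}(t,k_h)=\exp\!\big(i\beta t k/(\eps|k_h|^2)-\nu_h|k_h|^2 t\big)\widehat{v^0_h}(k_h)$; inverting the Fourier transform gives exactly the formula in the statement. The only point deserving attention is the Fourier mode $k=0$: there $v_h$ reduces to a field of the form $(v_1(t,y),0)$ and the reduced equation becomes the heat equation $\d_t v_1-\nu_h\d_{yy}v_1=0$, which is consistent with the formula since the factor $k/|k_h|^2$ vanishes identically on $\{k=0\}$; the point $k_h=0$ is negligible in the $\xi_y$--integral. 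Along the way one also records, by Plancherel and dominated convergence, that the formula defines $v_h\in\mathcal C(\bR_+,L^2(\omega_h))$ with $\|v_h(t)\|_{L^2(\omega_h)}\le\|\bar v^0_h\|_{L^2(\omega_h)}$, and that $v_h(t)$ remains divergence free because the multiplier is scalar and $k_h\cdot\widehat{v^0_h}(k_h)=0$.

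Finally I would close the argument in the reverse direction. The field $v=(v_h,0)$ built from the explicit formula, together with the pressure $p(t,x_h)$ recovered from $\nabla_h p=\nu_h\Delta_h v_h-\d_t v_h-\tfrac{\beta y}{\eps}v_h^\bot$ — a well-defined gradient precisely because $\zeta=\rot_h v_h$ satisfies the vorticity equation, so the right-hand side is curl free — is a genuine solution of \eqref{rescaled} with the prescribed homogeneous boundary conditions, and $v_{|t=0}=(\bar v^0_h,0)$. Since the Cauchy problem for \eqref{rescaled} with these boundary conditions has at most one solution in $\mathcal C(\bR_+,L^2(\omega))$ — apply the energy estimate to the difference $w$ of two solutions, multiplying the horizontal equation by $w_h$ and the vertical one by $\eta^2 w_3$, so that the Coriolis and pressure contributions vanish and only the nonnegative viscous dissipation remains, whence $w\equiv 0$ — we conclude $u=v$. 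There is no genuine obstacle in this proof: it is a reduction to a constant-coefficient (Fourier-multiplier) scalar equation, and the only places calling for a little care are the bookkeeping around the pressure and the Leray projection, the treatment of the $x$-independent ($k=0$) modes where the Biot--Savart reconstruction degenerates, and the verification that the explicit field lies in the class in which uniqueness holds.
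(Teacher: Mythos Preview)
Your proposal is correct, and the core computation---deriving the vorticity equation $\d_t\zeta+\tfrac{\beta}{\eps}\d_x\Delta_h^{-1}\zeta-\nu_h\Delta_h\zeta=0$, solving it mode by mode in Fourier, and recovering $v_h$ via Biot--Savart---is identical to the paper's. The only structural difference is how you justify that the full solution $u$ coincides with the two-dimensional field $(v_h,0)$: the paper argues directly that the property $\d_z u=0$ is propagated (by expanding in vertical Fourier modes $k_3$ and noting that the linear equation does not couple different $k_3$), whereas you construct the two-dimensional solution and invoke uniqueness for the Cauchy problem via an energy estimate. Your route is arguably slightly cleaner, since it avoids introducing the vertical Fourier basis altogether; the paper's route, on the other hand, makes explicit why no three-dimensional component can be generated, which connects more transparently with the decomposition into Rossby and gravity waves used later in Theorem~\ref{thm:propagation}.
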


\begin{proof}Let us first prove that the property $\d_z u=0$ is propagated by equation \eqref{NS}.
Using the same arguments as Chemin, Desjardins, Gallagher and Grenier in \cite{CDGG} for classical rotating fluids, one can introduce some kind of Fourier variable with respect to $z$, denoted by $k_3$. Since equation \eqref{NS} is linear, it can be easily checked that there is no resonance between Fourier modes in $k_3$; in other words, since the only Fourier mode at time $t=0$ is $k_3=0$, there is no Fourier mode corresponding to $k_3\neq 0$ for $t>0$, which means exactly that $\d_z v=0$.

We infer that for all $t\geq 0$, $v(t)$ is a two-dimensional vector field which satisfies
\be\begin{aligned}
   \d_t v_h  + \frac{1}{\eps } L_R v_h -\nu_h \Delta_h v_h=0,\quad
\Div_h v_h=0,\\
\d_z P=0,\ v_3=0.
  \end{aligned}
\label{eq:prop2D}\ee
This leads to
$$
v_h(t)=\exp\left(t\left(-\frac{L_R}{\eps} + \nu_h \Delta_h\right)\right)v_{h|t=0}.
$$
Let us now investigate the precise expression of the operator $L_R$. First, since $v_h$ is divergence free, we have, for all $y\in\bR$,
$$
\d_y \int_{\bT}v_2(\cdot, y)=-\int_{\bT}\d_x v_1(\cdot, y)=0.
$$
Consequently, since $v_h\in L^2(\bT\times \bR),$
$$
\int_{\bT} v_2(t,\cdot, y)=0\quad \forall t\geq 0,\ y\in\bR.
$$
Taking the $x$-average of the first component of \eqref{eq:prop2D}, we obtain
$$
\d_t \int_{\bT} v_1 - \nu_h \d_y^2 \int_\bT v_1=0.
$$
This corresponds to the ``stationary part'' of $v^\eps$ in Theorem \ref{thm:propagation}.

Hence Lemma \ref{lem:prop2D} is proved for the Fourier modes such that $k=0$, where $k$ is the Fourier variable associated with $x$. Thus we now focus on the modes such that $k\neq 0$, or, in other words, on initial data such that $\int_\bT \bar v^0_h=0$. For such vector fields, we have, since $v_h\in L^2(\bT\times \bR)$ is divergence free,
$$v_h= \nabla_h^\bot \Delta_h^{-1} \zeta,$$
where 
$$
\zeta(t):=\rot_h v_h =\d_x v_2-\d_y v_1.
$$
On the other hand,
$$
\rot_h (bv_h^\bot))= \Div_h(bv_h)= v_h \cdot \nabla b = \beta v_2.
$$
Gathering the last two inequalities, we infer that
$$
\d_t \zeta+ \frac{\beta}{\eps}\d_x \Delta_h^{-1} \zeta-\nu_h \Delta_h \zeta =0.
$$
In Fourier space, this leads to
$$
\d_t \hat \zeta(k,\xi_y)- i\frac{\beta k}{\eps(|k|^2+ |\xi_y|^2)}\hat \zeta(k,\xi_y)+ \nu_h(|k|^2+ |\xi_y|^2)\zeta =0,
$$
and thus, setting $k_h=(k,\xi_y)$,
\begin{eqnarray*}
 \hat v_h(t,k,\xi_y)&=&- \frac{ik_h^\bot}{|k_h|^2}\exp\left( i\frac{\beta k}{\eps|k_h|^2}t-\nu_h |k_h|^2t \right)\hat \zeta_{|t=0}(k,\xi_y) \\
&=&\exp\left( i\frac{\beta k}{\eps|k_h|^2}t -\nu_h |k_h|^2t\right)\frac{k_h^\bot \cdot \hat v^0_{h}(k,\xi_y) }{|k_h|^2}k_h^\bot.
\end{eqnarray*}
Since $v$ is a two-dimensional divergence free vector field, for all $k_h\in\bZ\times \bR$, we have $k_h\cdot \hat v^0_{h}(k_h)=0$, and thus
$$
\hat v^0_{h}(k_h)= \frac{k_h^\bot \cdot \hat v_h(k_h)}{|k_h|^2}k_h^\bot.
$$
Eventually, we retrieve
$$
 \hat v_h(t,k,\xi_y)=\exp\left( i\frac{\beta k}{|k|^2+ |\xi_y|^2}t -\nu_h |k_h|^2 t\right)v^0_{h}(k,\xi_y)\quad \forall t,k,\xi_y.
$$
Using the Fourier inversion formula, the proof of the Lemma is complete.
\end{proof}

\def\eqdefa{\buildrel\hbox{\footnotesize def}\over =}

\section{Three-dimensional propagation}
\label{sec:3D}

\subsection{Remarks about the qualitative behaviour of three-dimensional waves}$ $

We are now interested in waves having vertical oscillations, that is in the solutions to
\begin{equation}
\label{3D}
\begin{aligned}
\d_t u +\frac1\eps \beta y u^\perp +\left( \begin{matrix} \nabla_h p\\ \frac1{\eps^2} \d_z p \end{matrix}\right) -\nu_h \Delta_h u -\eps \d_{zz }u =0,\\
\nabla \cdot  u=0,\\
\d_z u_{h|z=0}=\d_z u_{h|z=1}=0,\quad u_{3|z=0} =u_{3|z=1} =0,
\end{aligned}
\end{equation}
having zero average with respect to $z$.

Once again, we introduce a kind of Fourier variable with respect to $z$ (see \cite{CDGG}), denoted by $k_3$, which here is different from zero. The Fourier variable associated with the first coordinate $x$ is still denoted by $k$.

If $\nu_h$ is sufficiently small, we then expect the main dynamics to be given by the Poincar\'e propagation operator
\begin{equation}
\label{LP-def}
L_{{P}}u = \beta y u^\perp +\left( \begin{matrix} \eps \nabla_h p\\ \frac1{\eps} \d_z p \end{matrix}\right)
\end{equation}
where $p$ is such that both the incompressibility constraint and the boundary condition are satisfied.

$\bullet$ A very rough analysis shows that {\bf fast oscillations with respect to $y$ should appear} for times greater than $\eps$. Indeed, as long as the solution $(u,p)$ to
$$\eps \d_t u + L_{{P}}u = 0$$
depends slowly on $y$, the pressure which satisfies
$$-(\d_{xx} +\d_{yy} p+\frac{1}{\eps^2}\d_{zz}) p = -\frac1\eps \beta y \d_x u_2+\frac1\eps \d_y(\beta y u_1)$$
can be approximated in the following way
$$\hat p=\frac \eps {k_3^2}\left(-ik \beta y \hat u_2+\d_y(\beta y \hat u_1)\right)=O(\eps) .$$
In particular, at leading order, the singular penalization behaves as in the compressible case
$$
(L_{ P}u )_h\sim \beta y u^\perp_h
$$Plugging this Ansatz in the evolution equation leads to
$$u_h \sim \sum_\pm u_h^{0,\pm} \exp \left(\pm i{\frac{\beta y t }{\eps} }\right),$$
which is relevant only for very small times, but indicates that a fast dependence with respect to $y$ can be expected.

\medskip
$\bullet$ On the other hand, we do not expect $(u,p)$ to behave as a function of $y/\eps$ only. Such a property, together with usual integrability conditions, would indeed imply that the solution $(u,p)$ concentrates on small times in the vicinity of $y=0$. As previously, a rough analysis based on the change of variable $Y=y/\eps$ and on some asymptotic expansion of $L_{{P}}u$
$$
\widehat{ L_{{P}} u}\sim (0, ik_3(k_3^2-\d_{YY})^{-1} (\beta Y \d_Y \hat u_1))$$
shows that ``concentrated functions" are not stable under the penalization $L_{{P}}$.

The mechanism we want to study \textbf{ involves therefore both scales $y$ and $y/\eps$}, and results from a balance between rotation and vertical oscillations, which is the main novelty here. Note indeed that previous works on rotating fluids consider either the case when the effect of rotation is dominating (macroscopic layer of fluid) \cite{CDGG} or the case when vertical oscillations hold on very small scales and can be averaged (shallow water approximation) \cite{GSR}.

Semiclassical analysis seems therefore to be the relevant tool to study this problem, insofar as it allows to separate both scales in a systematic way.

\medskip
$\bullet$ Note finally that, if the horizontal viscosity is such that $\nu_h \gg\eps^2$, then because of the small scale in $y$, we expect {\bf all the energy to be dissipated on a small time interval}, leading to some boundary layer effect (see the discussion in paragraph \ref{ssec:viscosity}).

In order to exhibit a non trivial propagation, we will assume in all the sequel that $$\nu_h =o(\eps^2).$$
We therefore start with the study of the 3D propagation without dissipation. We will then check \textit{a posteriori} that the viscous dissipation introduces only small error terms for any finite time.

\subsection{Semiclassical analysis of the three-dimensional propagation}$ $

In order to study the propagation of energy by 3D waves, a natural idea is then to get a polarization of Poincar\'e waves, i.e. to obtain a diagonalization of the system
$$\eps \d_t u + L_{{P}}u = 0$$
in the limit $\eps \to 0$.
We first use the incompressibility constraint to rewrite the propagator in the form of a $2\times 2$ matrix of pseudo-differential operators. We indeed have
$$-\Delta_\eps p := -\eps^2 (\Delta_h+\frac{1}{\eps^2} \d_{zz}) p=-\eps \beta y \d_x u_2 +\eps \d_y (\beta y u_1)$$
from which we deduce that
$$\eps \d_t u_h + \left( \begin{matrix} -\eps^2 \d_x \d_y \Delta_\eps^{-1} (\beta y \cdot) &-\beta y \cdot  -\eps^2\d^2_{xx} \Delta_\eps^{-1} (\beta y\cdot) \\
\beta y \cdot  -\eps^2\d^2_{yy} \Delta_\eps^{-1} (\beta y \cdot) & \eps^2 \d_x \d_y \Delta_\eps^{-1} (\beta y \cdot)
\end{matrix} \right)u_h =0.$$
Our first goal is then to perform a suitable change of variables leading to 
 $$\eps \d_t v + \left( \begin{matrix} H_\eps^+(\d_x,\d_z, y,\eps \d_y) & 0\\0&H_\eps^-(\d_x,\d_z, y,\eps \d_y)\end{matrix} \right) v =O(\eps^\infty).$$

In all the sequel, for the sake of simplicity, we will consider a single Fourier mode in $(x,z)$, and denote by $(k,k_3) \in \Z \times \Z^*$ the associated wavenumber. Any solution is  indeed a superposition of such waves. We will denote abusively $H_\eps^\pm(k,k_3,y,\eps \d_y)$ the  Fourier transform of $H_\eps^\pm (\d_x,\d_z, y,\eps \d_y)$.

\medskip
We are then brought back to study the propagation of waves by the  scalar pseudo-differential operator $H_\eps^\pm(k,k_3,y,\eps \d_y)$, which can be done for instance using classical results on the Wigner transform. For such scalar skew-symmetric pseudo-differential operators, we indeed know \cite{GMMP} that energy is propagated according to the hamiltonian transport equations
$$\d_t f +\{ h^\pm,f\} =0,$$
where $h^\pm(k,k_3,y,\xi) $ is the semiclassical principal symbol of $H_\eps^\pm(k,k_3,y,\eps \d_y)$.

Note that the time scale over which one has a macroscopic propagation of the energy is inversely proportional to the size of the oscillations. Such a property can be seen very simply on equations with constant coefficients 
$$\eps \d_t v+ h(\eps \d_y) v =0$$
remarking that the group velocity 
$$ \frac{d}{dk_2} \frac1\eps h(i\eps k_2)$$
has a finite limit as $\eps k_2 \to \xi$.

\medskip
What we are finally able to establish is the following Proposition
\begin{Prop}\label{3D-prop}
Let $u^0\in L^2(\omega)$ be a  compactly supported divergence free vector field such that $\int u^0 dz =0$ , and let $u\in \mathcal C(\bR_+, L^2(\om))$ be the solution of equation \eqref{3D} with initial data 
$u^0$.\\
Then the  $L^2$ norm of $u_h(t)$ on any fixed compact  converges to 0 as $t\to \infty$. \end{Prop}

In other words, 3D waves are dispersive, but only on times of order 1. 
Note that, in the case of a macroscopic layer of fluid, the velocity group of Poincar\'e waves is much larger (typically of order $1/\eps$); see for instance \cite{CDGG,GSR-handbook}.

Furthermore the vertical component $u_3$ of the velocity will not remain bounded, as is usually claimed in formal derivations leading to shallow water models.

\subsection{Reduction to a scalar situation}$ $

The first step of the proof follows a method initiated in \cite{CGPSR}.

$\bullet$ We first compute {\bf a kind of characteristic polynomial} for the matrix of pseudo-differential operators
$$\left(  \begin{matrix}
\eps (-\Delta_\eps)^2)^{-1} ik \eps \d_y (\beta y \cdot) & -\beta y + \eps (-\Delta_\eps)^{-1} k ^2\beta y\\
\beta y +\eps \d_y(-\Delta_\eps)^{-1} \eps \d_y (\beta y \cdot)  & \eps \d_y (-\Delta_\eps)^{-1} (i\eps k \beta y\cdot )\end{matrix}\right)$$
A simple way to obtain a scalar equation is to proceed by linear combination and substitution.

Because the solution is expected to depend both on $y$ and $y/\eps$ (whatever the initial data), $\eps \d_y$ is a $O(1)$ operator like multiplication by any function of $y$. We then apply usual rules of semiclassical analysis~:
$$\eps \d_y =O(1), \quad y=O(1),$$
and any commutator has smaller order
$$[\eps \d_y, y] = O(\eps).$$
Keeping only leading order terms, we get
$$\begin{aligned}
i\tau \hat u_1-\beta y \hat u_2 = O(\eps),\\
\beta y\hat u_1  +\eps \d_y(k_3^2-(\eps \d_y)^2)^{-1} \eps \d_y (\beta y \hat u_1)+i\tau \hat u_2=O(\eps)
\end{aligned}
$$
so that 
$$ \beta^2 y^2 \hat u_2 +\eps \d_y(k_3^2-(\eps \d_y)^2)^{-1} \eps \d_y (\beta^2 y^2 \hat u_2)-\tau ^2\hat u_2=O(\eps)$$
or equivalently
\begin{equation}
\label{dispersion-relation}
k_3^2(\beta y)^2 \hat u_2  -\tau^2(k_3^2-(\eps \d_y)^2)  \hat u_2=O(\eps)
\end{equation}
since commutators provide higher order terms with respect to $\eps$. Note  that one can also compute an exact pseudodifferential relation (which is actually a polynomial of degree 6 with respect to $\tau$)  by keeping all the terms
\begin{equation}
\label{dispersion-relation-ex}
P(\eps,y,\eps \d_y,\tau)\hat u_2 =0.
\end{equation}

Note that, contrarily to \cite{CGPSR}, as we will only consider times of order $1$, we do not need to compute subsymbols, so that we could also proceed directly using symbolic calculation and diagonalize the matrix
$$\left(  \begin{matrix}
0 & -\beta y \\
\beta y -{\xi ^2 \beta y \over k_3^2+\xi^2}   &0\end{matrix}\right).$$

Anyway, we expect the roots to the following  polynomial to play a special role in the propagation~:
\begin{equation}
\label{dispersion-relation-symbol}
P(0, y,\xi,\tau) =k_3^2 (\beta y)^2 -(k_3^2+\xi^2)\tau^2
\end{equation}

\medskip
$\bullet$ We can actually prove that there exist pseudo-differential operators $H_\eps^\pm$ with principal symbols
$$h^\pm = \pm  \sqrt{ (k_3\beta y)^2\over k_3^2+\xi^2}$$
such that 
$\epsilon\partial_t \mu^\pm=iH_\eps^\pm \mu^\pm $ implies that
$$
v^\pm :=\left(\begin{array}{c}(iH_\eps^\pm)^{-1} (\beta y \cdot) \\
\mathbb Id\\ -{k\over k_3} (iH_\eps^\pm)^{-1} (\beta y \cdot)+{i\over \eps k_3} (\eps \d_y\cdot ) \end{array}\right)\mu^\pm \mbox{ satisfies (\ref{3D}) up to } O(\epsilon^\infty),
$$
where $\mu^+, \mu^-$ are scalar functions.

This result is actually a variant of the main Lemma in \cite{CGPSR}. (Indeed the exact dispersion relation depends here explicitly on $\eps$.)

\begin{Lem}\cite{CGPSR}
Let $P_\eps=P(\eps, y,\xi,\tau)$ be a smooth function such that $\partial_\tau P_{0| P=0}\neq 0$, and let $h=h(y,\xi)$ be any continuous root of $$P(0,y, \xi,h(y,\xi))=0.$$

Then there exists a pseudo-differential operator $H_\eps=H_\eps(y,-i\epsilon\partial_y)$ with principal symbol $h(y,\xi)$ such that:
\begin{equation}
\label{theother}
H_\eps\psi=\tau\psi\ \Longrightarrow\ {\bf P}_{\eps,\tau}\psi=O(\epsilon^\infty)
\end{equation}
where ${\bf P}_{\eps,\tau}$ is a pseudo-differential operator of full symbol $P(\eps,y,\xi,\tau)$. 

\end{Lem}

The proof of this lemma relies on pseudo-differential functional calculus, and uses various quantifications to make the computations as simple as possible. For the sake of completeness, we recall here the main arguments,  but refer to \cite{CGPSR} for details.

 At first order, we  have
$$\begin{aligned}
{\bf P}_{\eps,\tau} \psi\equiv &\int e^{i\frac{\xi(y-y')}\epsilon}P(\eps,y,\xi,H_\eps(y',-i\epsilon\partial_y))\psi(y')\frac{d\xi dy'}\epsilon\\=&
\int e^{i\frac{\xi(y-y')}\epsilon}e^{i\frac{\xi'(y'-y'')}\epsilon}P(\eps, y,\xi,h(y'',\xi'))\psi(y'')\frac{d\xi d\xi'dy'dy''}{\epsilon^2}\\=&\int e^{i\frac{\xi(y-y')}\epsilon}P(\eps, y,\xi,h(y',\xi))\psi(y')\frac{d\xi dy'}\epsilon\end{aligned}
$$
So the principal symbol of ${\bf P}_{\eps,\tau} $ is $P(0, y,\xi,h(y,\xi))$ which, by assumption, is $0$.

 For the $\epsilon^\infty$ result, it is enough to repeat the same argument  with  $h_\epsilon\sim h+\sum \epsilon^k h_k$. We obtain
$$
P(\eps, y,\xi, h_\epsilon)+\sum_{k\geq 1}\epsilon^kQ_k(h,\dots, \partial_y^l\partial_\xi^m\ h_\epsilon)=0,
$$ 
that can be solved recursively under the condition $\partial_\tau P_{0| P=0}\neq 0$.

\medskip
$\bullet$ We further obtain a decomposition of any initial data on the eigenstates of the scalar propagators $H_\eps^\pm$.

For all $ u_{h}^0$, there exist $  \mu_\eps^{0,\pm}$ such that:
\begin{eqnarray*}
u_{0,h} &=&\sum_{j} \left(\begin{matrix} -\left( \beta y -\eps \d_y\hat \Delta_\eps^{-1} \eps \d_y (\beta y \cdot)\right)^{-1} (iH_\eps^j - \eps \d_y \hat \Delta_\eps^{-1} (i\eps k \beta y\cdot ) \\\mathbb Id\end{matrix} \right)\mu_\eps^{0,j }\\&&+O(\epsilon^\infty)\\
& =:&\sum_{\pm} {\mathbb Q}_\eps^j \mu_\eps^{0,j }+O(\epsilon^\infty) .	
\end{eqnarray*}

where $\hat \Delta_\eps := \eps^2 \d_{yy}^2-\eps^2 k^2-k_3^2$. 
The vertical component is then entirely determined by the divergence-free condition.

To prove this result,  one first remarks that the  leading order symbol of the matrix $(
{\mathbb Q}_\eps^+  \, {\mathbb Q}_\eps^- )$, namely
$$\left( \begin{matrix} -{i\sqrt{k_3^2+\xi^2}\over |k_3| \hbox{sgn}(y)} &  {i\sqrt{k_3^2+\xi^2}\over |k_3| \hbox{sgn}(y)}\\
1 & 1\end{matrix}\right)$$ 
 is invertible.
 
The inversion of the matrix $(
{\mathbb Q}_\eps^+  \, {\mathbb Q}_\eps^- )$ can then be done symbolically at any order.

\subsection{Dispersion of energy}$ $

Standard arguments of semiclassical analysis allow then to control the propagation of energy for the scalar equations
$$ \eps \d_t \mu_\eps^\pm + i H_\eps^\pm \mu_\eps^\pm=0$$

$\bullet$ Because $L_{{P}}$ is skew-symmetric (in some weighted $L^2$-space), we have a uniform control on the $L^2$ norm of $u_h$ 
$$ \| u_h\|_{L^2(\omega)}^2+ \eps^2 \| u_3\|_{L^2(\omega)}^2 = \| u_h^0\|_{L^2(\omega)}^2+ \eps^2 \| u_3^0\|_{L^2(\omega)}^2 $$
These uniform \textit{a priori} estimates allow to establish the convergence of the remainders in the equations for the Wigner transforms
$$f^\pm_\eps (t, y,\xi):=\frac1\pi  \int e^{2i \xi y'} \mu_\eps^\pm (y-\eps y') \bar \mu_\eps^\pm(y+\eps y') dy'$$

We therefore have
$$\d_t f_\eps^\pm +\{h^\pm, f_\eps^\pm\} =O(\eps).$$

For detailed computations leading to that estimate, we refer for instance to \cite{lions-paul} or \cite{GMMP}~:

\begin{Lem}\cite{GMMP}
Let $\mu^{0,\pm}$ be any fixed function of $L^2$ (non-oscillatory).

Assume that 
\begin{itemize}
\item $iH_\eps^\pm$ is self-adjoint on $L^2$;
\item  there exists $\sigma \in \bR$ such that $ H_\eps^\pm$ is of order $\sigma$ uniformly as $\eps \to 0$;
\item the Weyl symbol of $H_\eps^\pm$ satisfies
$$h^\pm_\eps = h^\pm +\eps h^\pm_1 +o(\eps)\hbox{ uniformly in }C^\infty_{loc}.$$
\end{itemize}

Then the Wigner transform $f_\eps^\pm(t,y,\xi)$ of $\mu_\eps^\pm(t)$ converges locally uniformly in $t$ to the continuously $t$-dependent positive mesure $f^\pm$, solution to
$$\d_t f ^\pm+\{h^\pm, f^\pm\} =0\,.$$
\end{Lem}

In other words, the energy associated to the $\pm$ mode is transported along the characteristics of the hamiltonian $h^\pm$~:
\begin{equation}
\label{car}
\begin{aligned}
{dY^\pm \over dt} = {\d h^\pm\over \d \xi}(Y^\pm,\Xi^\pm),\\
{d\Xi^\pm\over dt}=-{\d h^\pm\over \d y}(Y^\pm,\Xi^\pm)\,.
\end{aligned}
\end{equation}

\medskip
$\bullet$ The previous 1D hamiltonian systems are of course integrable. The bicharacteristics are indeed included in the level lines of $h^\pm$, which are hyperbola as shown in Figure 2.
\begin{figure} [htbp] %  figure placement: here, top, bottom, or page
   \centering
   \includegraphics[width=4.9in]{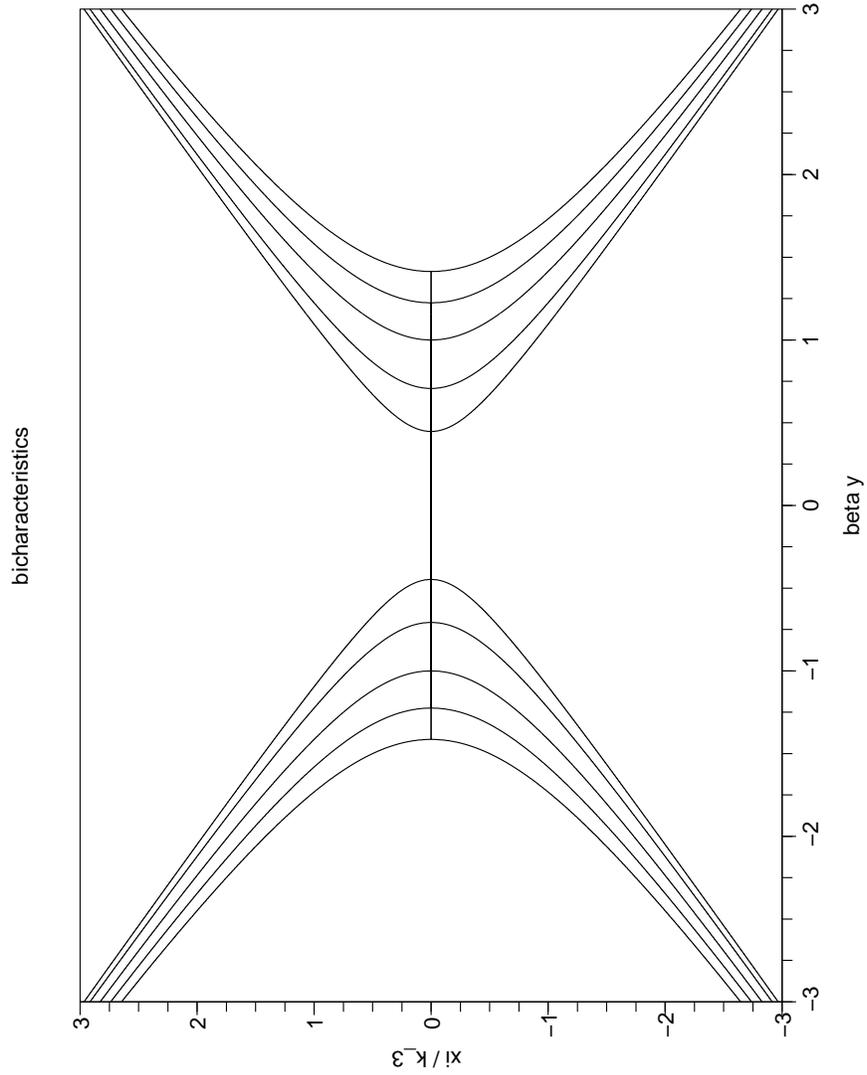} 
   \caption{Bicharacterictics associated to Poincar\'e waves}

\label{fig2}

\end{figure}

A rapid inspection of the large time asymptotics show that trajectories cannot be trapped in some compact. This would indeed imply that there exists either some stationary point or some turning point. But $\Xi(t)$ is a monotonic function
$$
{d\Xi\over dt}=\mp {\beta k_3 \hbox{sgn}(Y(t)) \over \sqrt{k_3^2+\Xi^2(t)}}$$
which converges necessarily to infinity.

For any fixed compact, we can even get an explicit estimate of the exit time since 
$$|\Xi(t)-\Xi_0|\geq  \beta t,$$
from which one deduces a similar estimate for $Y(t)$~:
$$|Y(t)| \geq {h_0\over \beta k_3} \sqrt{k_3^3 + (\beta t -|\Xi_0|)^2}$$
Note that, since the initial data $u_0$ we consider is supposed to depend only on the slow variable $y$, all bicharacteristics we are interested in satisfy $\Xi_0 =0$.

By definition of the wavefront set, we finally obtain Proposition \ref{3D-prop}.

\bigskip
\begin{Rmk}
The qualitative behaviour of Rossby and Poincar\'e waves obtained here, i.e. in the case of a thin layer of fluid with rigid lid, is very different from the one exhibited in shallow water approximations (see \cite{CGPSR}). Note that, in both cases, Rossby waves are easily identified because they are directly linked to the inhomogeneity of the Coriolis force, in particular they always propagate eastwards.

Here the energy associated to Poincar\'e waves propagates much slower than the energy associated to Rossby waves. The point is that fast oscillations with respect to latitude $y$, which are generated spontaneously for vertical modes but not for purely 2D Rossby waves, slow down the propagation. 
Maybe it would be physically relevant to consider initial data that depend already on the fast variable $y/\eps$.

The other  point which should be discussed is the influence of the free-surface. But, at the present time, we have no convenient mathematical tool to study the propagation of waves in such a complex geometry.
\end{Rmk}

\subsection{Influence of the viscosity}$ $
\label{ssec:viscosity}
In the case when $\nu_h = o(\eps^2)$, an easy computation based on the energy estimate shows that the viscous dissipation does not modify the propagation for finite times.

\medskip
More generally, we could extend the previous study considering the whole viscous Poincar\'e propagation operator
\begin{equation}
\label{LPviscous-def}
L_{{P}}u = \beta y u^\perp +\left( \begin{matrix} \eps \nabla_h p\\ \frac1{\eps} \d_z p\end{matrix}\right) -\nu_h \Delta _h u -\eps \d_{zz} u
\end{equation}
where $p$ is such that both the incompressibility constraint and the boundary condition are satisfied.

The diagonalization process is of course unchanged since the dissipation operator is scalar.
The only difference is therefore that one has now to control the propagation of energy for the scalar equations
$$ \eps \d_t \mu_\eps^\pm + i H_\eps^\pm \mu_\eps^\pm-\nu_h \Delta _h \mu_\eps^\pm -\eps \d_{zz} \mu_\eps^\pm=0.$$
A standard computation  (reported for instance in Proposition 1.8 of \cite{GMMP}) shows that   the Wigner transform then satisfies the following damped transport equation 
$$\d_t f _\eps^\pm+4 {\nu_h\over \eps^2} |\xi|^2 f_\eps^\pm+\{h^\pm, f_\eps^\pm\} =o(1)\,.$$
(Note that the
 symmetric part of the propagator occurs at leading order in $\eps$, which can be seen by easy symmetry considerations.)
 
 We then deduce that
 \begin{itemize}
 \item if $\nu_h \ll \eps^2$, the energy is propagated according to the bicharacteristics associated to $h^\pm$, as stated in Proposition \ref{3D-prop} ;
 
 \item if $\nu_h \gg\eps^2$, the energy contained initially in the Poincar\'e modes is dissipated on a very short time, leading to some initial layer phenomenon ;
 
 \item if $\nu_h\sim \eps^2$, the dynamics is a combination of both phenomena, as shown by Duhamel's formula
 $$f^\pm(t,Y^\pm(t,y,\xi),\Xi^\pm((t,y,\xi)) \exp\left( 4{\nu_h \over \eps^2}|\Xi(t,y,\xi)|^2t\right)= f^0(t,y,\xi) \,.$$
Note in particular that the energy associated to Poincar\'e modes has  a super exponential decay, since $|\Xi(t,y,\xi)|\to \infty$ along any trajectory.
 \end{itemize}

\section{Derivation of the thermocline}
\label{sec:thermocline}

This section is devoted to the proof of 
Proposition \ref{prop:thermocline}, which relies on classical elliptic arguments. The main difficulty lies in the fact that the equation on $\theta$ is degenerate in the horizontal variables. We first prove the existence of $\bar \theta$, along with some $H^1$ estimates, and then we prove the convergence. 

Throughout the proof, we assume that the wind stress $\sigma$ vanishes at sufficiently high order near $y=0$, so that there is no need for a truncation (see section \ref{sec:BL}) and the function $\ustat$ does not have any singularity.

\vskip1mm

$\bullet$ \textbf{\textit{A priori} estimates on the function $\bt$:}

\vskip1mm

Let $\bt\in L^2(\omega_h, H^1([0,1]))$ be any solution of \eqref{eq:btheta}. Multiplying \eqref{eq:btheta} by $\bt$ and integrating on $\omega$, we obtain
\begin{eqnarray}
\lambda\int |\d_z \bt|^2&=&-\frac{1}{2}\int_{\d \omega}\uint\cdot n_{ \omega} \bt^2	+\lambda \int_{\omega_h}\d_z \bt_{|z=1} \bt_{|z=1}- \lambda \int_{\omega_h}\d_z \bt_{|z=0} \bt_{|z=0}\nonumber\\
&=&-\frac{1}{2}\int_{\omega_h}\uint_{3|z=1}\theta_1^2+ \lambda \int_{\omega_h} \theta_1 \d_z\bt_{|z=1}.\label{eq:dz-bt}
\end{eqnarray}
According to section \ref{sec:int}, we have 
$$
\uint_{3|z=1}=\frac{\d_x\sigma_2}{b} - \d_y \frac{\sigma_1}{b}.
$$
We assume that $\sigma $ is such that the right-hand side belongs to $L^\infty(\omega_h).$ 
We now evaluate $\d_z\bt_{|z=1}:$ we have
\begin{eqnarray}
\lambda \d_z\bt_{|z=1}&=&\lambda \int_0^1\d_{zz}\bt= \int_0^1 \uint\cdot \nabla \bt\nonumber\\
&=&\Div_h(\uint_h\int_0^1\bt) +  \uint_{3|z=1}\theta_1.\label{eq:dz-bt_1}
\end{eqnarray}
Recall that $\uint_h$, defined in section \ref{sec:int}, is independent of $z$, while $\uint_3$ is linear with respect to $z$.
Consequently, the function $\bt$ depends on $x_h$ and $z$, and 
\begin{eqnarray}
\lambda \int_{\omega_h} \theta_1 \d_z\bt_{|z=1}&=&\int_{\omega_h} \theta_1\left(\Div_h(\uint_h\int_0^1\bt) +  \uint_{3|z=1}\theta_1 \right)\nonumber\\
&=&-\int_\omega\bt \uint_h\cdot \nabla_h \theta_1  + \int_{\omega_h}\uint_{3|z=1}\theta_1^2.\label{rhs}
\end{eqnarray}
Using the identity
$$
\bt(\cdot, z)=\theta_1 - \int_z^1\d_z\bt(\cdot, z')\:dz',
$$
we deduce that
\be\label{Poincare_bt}
\|\bt\|_{L^2(\omega)}\leq\|\theta_1\|_{L^2(\omega_h)} +\|\d_z\bt\|_{L^2(\omega)}.
\ee
Gathering \eqref{eq:dz-bt}, \eqref{rhs} and \eqref{Poincare_bt}, we infer that 
\begin{eqnarray*}
\lambda\int_\omega |\d_z \bt|^2	&=& \frac{1}{2}\int_{\omega_h}\uint_{3|z=1}\theta_1^2-\int_\omega\bt \uint_h\cdot \nabla_h \theta_1 \\
&\leq & \frac{1}{2}\| \uint_{3|z=1}\|_{L^\infty(\omega_h)}\|\theta_1\|_{L^2}^2 \\&&+  \|\uint_h\|_{L^\infty}\|\nabla_h\theta_1\|_{L^2}(\|\theta_1\|_{L^2}+\|\d_z \bt\|_{L^2}).
\end{eqnarray*}
Using the Cauchy-Schwarz inequality, we obtain eventually
\begin{eqnarray}\label{est:bt}
\lambda\int_\omega |\d_z \bt|^2	 &\leq &\| \uint_{3|z=1}\|_{L^\infty}\|\theta_1\|_{L^2}^2 +  \|\uint_h\|_{L^\infty}\|\nabla_h\theta_1\|_{L^2}\|\theta_1\|_{L^2} \\&&\nonumber+ \frac{1}{\lambda }\|\uint_h\|_{L^\infty}^2\|\nabla_h\theta_1\|_{L^2}^2.
\end{eqnarray}
Inequalities \eqref{est:bt} and \eqref {Poincare_bt} entail that any solution $\bt$ of \eqref{eq:btheta} is bounded in $L^2(\omega_h, H^1([0,1]))$ by a constant depending only on $\lambda$, $\theta_1$ and $\uint$.

We now derive estimates on the horizontal derivatives in a similar fashion: we have
\be\label{eq:nabla_bt}
-\lambda\d_{zz}\nabla_h\bt + (\uint\cdot\nabla)\nabla_h\bt = - (\nabla_h \uint_h)\cdot \nabla_h \bt - \nabla_h\uint_3\d_z\bt.
\ee
Multiplying the above equation by $\nabla_h\bt$ and integrating by parts, we have, using the boundary conditions,
\begin{eqnarray*}
-\int_\omega \d_{zz}\nabla_h\bt\cdot \nabla_h \bt &=&\int_\omega |\d_z \nabla_h \bt|^2 - \int_{\omega_h} \d_z \nabla_h \bt_{|z=1} \cdot \nabla_h \theta_1\\
&=&\int_\omega |\d_z \nabla_h \bt|^2  + \int_{\omega_h} \d_z  \bt_{|z=1}\Delta_h \theta_1.
\end{eqnarray*}
Using equation \eqref{eq:dz-bt_1}, we express $\d_z\bt_{|z=1}$ in terms of $\bt$ and $\theta_1$. Integrating by parts once again leads to
$$
\left|  \int_{\omega_h} \d_z  \bt_{|z=1}\Delta_h \theta_1 \right|\leq \frac{1}{\lambda}\left(\|\uint_h\|_{L^\infty} \|\bt\|_{L^2} \|\theta_1\|_{H^3} +  \|\uint_3\|_{L^\infty} \|\theta_1\|_{H^2}\|\theta_1\|_{L^2}  \right).
$$
On the other hand, since $\uint_{3|z=0}=0$, we have
\begin{eqnarray*}
2\int_\omega 	\left[ (\uint\cdot\nabla)\nabla_h\bt\right]\cdot \nabla_h\bt&=&\int_{\omega_h}\uint_{3|z=1}|\nabla_h\bt_{|z=1}|^2 -\int_{\omega_h}\uint_{3|z=0}|\nabla_h \bt_{|z=0}|^2\\
&=&\int_{\omega_h}\uint_{3|z=1}|\nabla_h\theta_1|^2.
\end{eqnarray*}
The two terms in the right-hand side of \eqref{eq:nabla_bt} can easily be evaluated in $L^2$ using the estimate on $\d_z \bt$; there remains
$$\lambda\int_\omega |\d_z \nabla_h \bt|^2\leq C + \|\nabla_h \uint_h\|_{L^\infty }\|\nabla_h\bt\|_{L^2}^2,$$
where the constant $C$ depends on $\lambda $, $\|\uint\|_{L^\infty}$ and $\|\theta_1\|_{H^3}$.

Assume that
$$
 \| \nabla_h\uint_h\|_{L^\infty(\omega)} \leq \frac{\lambda}{2};
$$
this assumption is discussed in Remark \ref{rem:thermocline} following Proposition \ref{prop:thermocline}.
Then
$$
\int_\omega |\d_z \nabla_h \bt|^2+ \int_\omega | \nabla_h \bt|^2\leq C,
$$
where the constant  $C$ depends  on $\lambda$, $\theta_1$ and $\uint$.
These estimates easily lead to the existence of a solution $\bt$ of equation \eqref{eq:btheta}; the uniqueness of $\bt$ follows from the estimates above with $\theta_1=0$.
The same method also shows that under  condition \eqref{hyp:u_h_lambda} on $\nabla_h\uint_h$, $D_h^2\bt$ is bounded in $ L^2(\omega_h, H^1([0,1])$. Plugging this estimate back into \eqref{eq:nabla_bt}, we deduce that $\nabla_h \bt\in L^2(\omega_h, H^2[0,1])$, and thus that $\nabla_h \bt$ is bounded in $L^2(\omega_h, W^{1,\infty}([0,1])).$

Concerning the function $\tbl$, the existence and uniqueness are obvious; we have merely
$$
\tbl(x_h,\zeta)=\frac{1}{2\lambda}\nabla_h\theta_1\cdot \sum_\pm (\sigma\pm i\sigma^\bot)\frac{\exp(-\lambda^\pm(x_h)\zeta)}{(\lambda^\pm(x_h))^3}.
$$

\vskip1mm

$\bullet$ \textbf{Proof of convergence:}

\vskip1mm

We construct an approximate solution of \eqref{eq:theta} as follows: we set
$$
\tapp(x_h,z)=\bt(x_h,z) + \eps \tbl\left( x_h,\frac{1-z}{\eps} \right) + \eps  \tilde \theta(x_h,z),
$$
where the function $ \tilde \theta$ is defined by
$$
 \tilde \theta(x_h,z)=(z-1)\frac{1}{\eps}\d_\zeta\tbl_{|\zeta=\frac{1}{\eps}}(x_h) - \tbl_{|\zeta=0}(x_h).
$$
Notice that by construction, 
$$
\d_z \tapp_{|z=0}=0,\quad \tapp_{|z=1}=\theta_1.
$$
Moreover, using the definition on $\tbl$, it is easily proved that $\tilde \theta=O(1)$ in $W^{2,\infty}(\omega)$ (provided the stress $\sigma$ is smooth and vanishes at a sufficiently high order near $y=0$).

Consequently, 
\begin{eqnarray}
\label{eq:tapp}&&-\lambda \d_{zz} \tapp - \lambda \eps^2\Delta_h \tapp + \ustat \cdot \nabla \tapp\\
\nonumber&=&\ubl_h\cdot \nabla_h (\bt - \theta_1) - \lambda \eps^2 \Delta_h \bt + \ubl_3\d_z\bt + \vint\cdot \nabla \bt\\
\nonumber&&-\lambda \eps^3 \Delta_h \tbl\left( x_h,\frac{1-z}{\eps} \right)+ \eps \ustat\cdot \nabla  \tbl\left( x_h,\frac{1-z}{\eps} \right)\\
\nonumber&&-\lambda \eps^3 \Delta_h \tilde \theta+ \eps \ustat\cdot \nabla  \tilde \theta.
\end{eqnarray}
According to the results of sections \ref{sec:BL} and \ref{sec:int}, we have
$$\ba
\|\ubl_3\|_{L^2}=O(\sqrt{\eps}),\quad \|\vint\|_{L^2}=o(\eps),\\
\eps \|\ustat_h\|_{L^\infty}, \|\ustat_3\|_{L^\infty}=O(1),\quad \eps \|\ustat\|_{L^2}=O(\sqrt{\eps}).
\ea
$$
These estimates, together with the ones derived above on $\bt$, enable us to bound all the terms in the right-hand side of \eqref{eq:tapp}, except for the first one. Using Hardy's inequality, we have
\begin{eqnarray*}
&&\left\| \ubl_h\cdot \nabla_h (\bt - \theta_1)\right\|_{L^2(\omega)}\\
&\leq & \left\| (1-z)\ubl_h  \right\|_{L^\infty(\omega_h, L^2([0,1]))} \left\| (1-z)^{-1} \nabla_h (\bt - \theta_1) \right\|_{L^2(\omega_h, L^\infty([0,1]))}\\
&\leq & C\sqrt{\eps} \left\| \d_z \nabla_h (\bt - \theta_1) \right\|_{L^2(\omega_h, L^\infty([0,1]))}.
\end{eqnarray*}
Thus $\tapp$ is an approximate solution of \eqref{eq:theta}, with an error term $o(1)$ in $L^2(\omega)$. As a consequence, $\theta-\tapp$ satisfies
$$
\ba
-\lambda \d_{zz} (\theta-\tapp) - \lambda \eps^2\Delta_h (\theta-\tapp) + \ustat \cdot \nabla (\theta-\tapp)=o(1),\\
(\theta-\tapp)_{|z=1}=0, \quad\d_z(\theta-\tapp)_{|z=0}=0.
\ea
$$
Multiplying the above equation by $\theta-\tapp$ and using the Poincar\'e inequality, we prove that 
$$
\left\|\d_z (\theta-\tapp)  \right\|_{L^2(\omega)}=o(1),
$$
and thus the Proposition is proved. 

\section*{Acknowledgements}

This work received the support of  the Agence Nationale de la Recherche (project ANR-08-BLAN-0301-01).

\end{document}